\documentclass[11pt,a4paper]{amsart}

\usepackage{amsmath}
\usepackage{amssymb}
\usepackage{amsthm}
\usepackage{color}  
\usepackage{enumerate}
\usepackage{tikz}
\usetikzlibrary{decorations.markings}
\usetikzlibrary{patterns}
\usepackage{tikz-cd}
\usepackage[english]{babel}
\usepackage{ifthen, srcltx}

\numberwithin{equation}{section}

\usepackage[pagebackref,
    ,pdfborder={0 0 0}%
    ,urlcolor=black,a4paper,hypertexnames=false]{hyperref}

\hypersetup{pdfauthor={Steffen Kionke, Clara L\"oh},%
            pdftitle={Notes on $p$-adic simplicial volume}}

\newcommand{\showcomments}{yes}

\newsavebox{\commentbox}
%
{\ifthenelse{\equal{\showcomments}{yes}}%
{\footnotemark
        \begin{lrbox}{\commentbox}
        \begin{minipage}[t]{1.25in}\raggedright\sffamily\tiny
        \footnotemark[\arabic{footnote}]}
{\begin{lrbox}{\commentbox}}}
{\ifthenelse{\equal{\showcomments}{yes}}
{\end{minipage}\end{lrbox}\marginpar{\usebox{\commentbox}}}
{\end{lrbox}}}


\title[$p$-Adic simplicial volume]
 {A note on $p$-adic simplicial volumes} 

 \author{Steffen Kionke}
 \address{Fakult\"{a}t f\"{u}r Mathematik und Informatik, FernUniversit\"{a}t in Hagen, 58084 Hagen, Germany}
 \email{steffen.kionke@fernuni-hagen.de}
 
 \author{Clara L\"oh}
 \address{Fakult\"{a}t f\"{u}r Mathematik, Universit\"{a}t Regensburg, 93040 Regensburg, Germany}
 \email{clara.loeh@mathematik.uni-r.de}

\date{\today}

\theoremstyle{plain}
\newtheorem{theorem}{Theorem}[section]
\newtheorem*{theorem*}{Theorem}
\newtheorem*{proposition*}{Proposition}
\newtheorem*{lemma*}{Lemma}
\newtheorem{lemma}[theorem]{Lemma}
\newtheorem{corollary}[theorem]{Corollary}
\newtheorem*{corollary*}{Corollary}
\newtheorem{proposition}[theorem]{Proposition}

\theoremstyle{definition}
\newtheorem{definition}[theorem]{Definition}

\newtheorem{remark}[theorem]{Remark}
\newtheorem*{remark*}{Remark}
\newtheorem{example}[theorem]{Example}
\newtheorem*{example*}{Example}
\newtheorem{question}[theorem]{Question}
\newtheorem*{question*}{Question}

\DeclareMathOperator{\id}{Id}

\DeclareMathOperator{\rk}{rk}

\providecommand{\bbN}{\mathbb{N}}
\providecommand{\bbR}{\mathbb{R}}

\providecommand{\bbQ}{\mathbb{Q}}
\providecommand{\bbZ}{\mathbb{Z}}
\providecommand{\bbF}{\mathbb{F}}

\renewcommand{\epsilon}{\varepsilon}
\renewcommand{\phi}{\varphi}

\def\sv#1{%
  \lVert #1\rVert}
\def\args{%
  \,\cdot\,}
\def\qand{%
  \quad\text{and}\quad}
\def\qor{%
  \quad\text{or}\quad}
\def\exi#1{%
  \exists_{#1}\;\;\;}
\def\fa#1{%
  \forall_{#1}\;\;\;}


\begin{document}

\begin{abstract}
  We define and study generalizations of simplicial volume over
  arbitrary seminormed rings with a focus on $p$-adic simplicial
  volumes. We investigate the dependence on the prime and establish
  homology bounds in terms of $p$-adic simplicial volumes.  As the
  main examples we compute the weightless and $p$-adic simplicial
  volumes of surfaces. This gives a way to calculate classical
  simplicial volume of surfaces without hyperbolic straightening and
  shows that surfaces satisfy mod~$p$ and $p$-adic approximation of
  simplicial volume.
\end{abstract}

\maketitle

\section{Introduction}

The simplicial volume of an oriented compact connected manifold is the
$\ell^1$-seminorm of the fundamental class in singular homology with
$\bbR$-coef\-ficients, which encodes topological information related to
the Riemannian volume~\cite{Gromov-vbc}. A number of variations of
simplicial volume such as the \emph{integral simplicial volume}
or \emph{weightless simplicial volume} over finite fields proved to
be useful in Betti number, rank gradient, and torsion homology
estimates~\cite{FFM,sauervolgrowth,loeh-odd,loehrg,loehfp}.

In the present article, we will focus on $p$-adic simplicial volumes.
The basic setup is as follows: If $M$ is
an oriented compact connected mani\-fold and $(R, |\cdot|)$ is a
seminormed ring (see Section~\ref{sec:simplicial-volume-def}), then the
\emph{simplicial volume of~$M$ with $R$-coefficients} is defined as
the infimum
  \[ \sv{M ,\partial M}_R
  := \inf\biggl\{ \sum_{j=1}^k |a_j|
  \biggm| \sum_{j=1}^k a_j \cdot \sigma_j \in Z(M,\partial M;R)
  \biggr\}
  \in \bbR_{\geq 0}
  \]
over the ``$\ell^1$-norms'' of all relative fundamental cycles of~$M$.
For $\bbR$ or $\bbZ$ with the ordinary absolute value one
obtains the classical simplicial volume~$\sv{M}$ and the integral
simplicial volum~$\sv{M}_\bbZ$. For a ring $R$ with the trivial seminorm this gives
rise to the weightless simplicial volume $\sv{M}_{(R)}$~\cite{loehfp}.
For other seminormed rings one obtains new, unexplored invariants.
We prove a number of fundamental results that 
describe how these simplicial volumes for different seminormed rings
are related. 

Using the ring~$\bbZ_p$ of $p$-adic integers or the field~$\bbQ_p$ of
$p$-adic numbers with the $p$-adic absolute value as underlying
seminormed rings leads to $p$-adic simplicial volumes. The long-term hope is
that $\sv{M,\partial M}_{\bbZ_p}$ and $\sv{M,\partial M}_{\bbQ_p}$
might contain refined information on $p$-torsion in the homology
of~$M$.

\subsection{Dependence on the prime}

Extending the corresponding result for $\bbF_p$-simplicial
volumes~\cite[Theorem~1.2]{loehfp}, we show that the $p$-adic
simplicial volumes contain new information only for a finite number of
primes:
 
 \begin{theorem}\label{thm:equality-for-aa-primes}
Let $M$ be an oriented compact connected manifold. Then, for almost
all primes $p$,
\[ \sv{M,\partial M}_{(\bbF_p)} = \sv{M,\partial M}_{\bbZ_p}  = \sv{M,\partial M}_{\bbQ_p} = \sv{M,\partial M}_{(\bbQ)}.\]
\end{theorem}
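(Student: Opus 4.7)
The plan is to sandwich $\sv{M,\partial M}_{\bbZ_p}$ and $\sv{M,\partial M}_{\bbQ_p}$ between $\sv{M,\partial M}_{(\bbF_p)}$ and $\sv{M,\partial M}_{(\bbQ)}$, and to invoke the known equality $\sv{M,\partial M}_{(\bbF_p)} = \sv{M,\partial M}_{(\bbQ)}$ for almost all $p$ from \cite[Theorem~1.2]{loehfp}.

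For the upper bound, note that $\sv{M,\partial M}_{(\bbQ)}$ is a non-negative integer, so the infimum is attained by a rational fundamental cycle $c = \sum_{j=1}^k (p_j/q_j) \sigma_j$ with exactly $k = \sv{M,\partial M}_{(\bbQ)}$ non-zero terms. For every prime $p$ not dividing any denominator $q_j$, the coefficients lie in $\bbZ_p$ with $|p_j/q_j|_p \leq 1$, so $c$ is also a fundamental cycle over $\bbZ_p$ and over $\bbQ_p$ with $p$-adic norm at most $k$. This yields $\sv{M,\partial M}_{\bbZ_p}, \sv{M,\partial M}_{\bbQ_p} \leq \sv{M,\partial M}_{(\bbQ)}$ for all but finitely many $p$. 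For the matching lower bound, any $\bbZ_p$-fundamental cycle $c = \sum a_j \sigma_j$ reduces modulo $p$ to an $\bbF_p$-fundamental cycle whose support is $\{j : |a_j|_p = 1\}$, a set of cardinality at most $\sum_j |a_j|_p = \sv{c}_{\bbZ_p}$. Hence $\sv{M,\partial M}_{(\bbF_p)} \leq \sv{M,\partial M}_{\bbZ_p}$, and combined with \cite[Theorem~1.2]{loehfp} this forces $\sv{M,\partial M}_{(\bbF_p)} = \sv{M,\partial M}_{\bbZ_p} = \sv{M,\partial M}_{(\bbQ)}$ for almost all $p$, settling the $\bbZ_p$ case.

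The $\bbQ_p$ case is the main obstacle. The isometric inclusion $\bbZ_p \hookrightarrow \bbQ_p$ gives $\sv{M,\partial M}_{\bbQ_p} \leq \sv{M,\partial M}_{\bbZ_p}$ unconditionally, so only the reverse inequality requires work. Given a $\bbQ_p$-fundamental cycle $c = \sum a_j \sigma_j$, a key structural observation is that some coefficient $a_j$ must lie outside $p\bbZ_p$: otherwise $c/p$ would be a $\bbZ_p$-cycle with class $[M,\partial M]/p$, contradicting the fact that $[M,\partial M]$ generates the torsion-free group $H_n(M,\partial M;\bbZ_p) = \bbZ_p$. Choose $k \geq 0$ minimal so that $\tilde c := p^k c \in C_n(M,\partial M;\bbZ_p)$; then $\tilde c$ is a $\bbZ_p$-cycle representing $p^k[M,\partial M]$ whose mod-$p$ reduction is an $\bbF_p$-boundary whenever $k \geq 1$. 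For all but finitely many $p$, the integral homology $H_{n-1}(M,\partial M;\bbZ)$ is $p$-torsion free, which enables one to iteratively lift mod-$p$ boundaries and ``divide $\tilde c$ by $p$'', producing after $k$ steps a genuine $\bbZ_p$-fundamental cycle $z$. The technical heart of the argument is controlling the $p$-adic norm throughout this iteration via the ultrametric inequality, to ensure that $\sv{z}_{\bbZ_p} \leq \sv{c}_{\bbQ_p}$.
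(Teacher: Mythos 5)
Your treatment of the $\bbZ_p$ and $\bbF_p$ equalities is correct, and is in fact a more elementary route than the paper's. The paper argues by considering the (finitely many) semi-simplicial sets generated by small fundamental cycles and the elementary divisors of their boundary maps, then lifts a minimizing $\bbF_p$-cycle to $\bbZ_{(p)}$ for $p$ outside a finite bad set. You instead go ``downhill'' from $\bbQ$: pick a single $\bbQ$-fundamental cycle realizing the (integer-valued, hence attained) weightless $\bbQ$-volume, note that it lies in $C_*(M;\bbZ_{(p)})$ for all $p$ coprime to its denominators, and combine this with the easy inequality $\sv{M,\partial M}_{(\bbF_p)} \leq \sv{M,\partial M}_{\bbZ_p}$ and \cite[Theorem~1.2]{loehfp}. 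Both work; yours avoids the elementary-divisor bookkeeping.

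The $\bbQ_p$ case is where your proposal has a genuine gap. Your sketched scheme --- scale a $\bbQ_p$-cycle by $p^k$ into $\bbZ_p$, then iteratively ``divide by $p$'' by lifting $\bmod\,p$ boundaries --- is never executed, and the missing step is precisely the hard one. Each division step replaces $\tilde c$ by $(\tilde c - \partial b)/p$ for some auxiliary chain $b$, and there is no control on $|b|_{1,p}$ a priori. You appeal to ``the ultrametric inequality'' to bound the norm, but the $\ell^1$-norm $\sum_j |a_j|_p$ on chains is \emph{not} ultrametric --- it is an archimedean sum of $p$-adic absolute values (the paper itself flags this issue at the end of Section 3) --- so that inequality does not help. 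Moreover, the $p$-torsion-freeness of $H_{n-1}(M,\partial M;\bbZ)$ lets you lift a $\bmod\,p$ boundary to an integral boundary as a homology class, but says nothing about lifting it as a small chain.

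The fix is much simpler and is already implicit in what you have proved. Once you know $\sv{M,\partial M}_{\bbQ_p} \leq \sv{M,\partial M}_{\bbZ_p} \leq \sv{M,\partial M}_{(\bbQ)}$ for almost all $p$, the right-hand side is a fixed finite number, so for all sufficiently large $p$ we have $\sv{M,\partial M}_{\bbQ_p} < p$. For such $p$, any relative $\bbQ_p$-fundamental cycle $\sum_j a_j \sigma_j$ of $\ell^1$-norm less than $p$ satisfies $|a_j|_p < p$ for every $j$ individually, hence $|a_j|_p \leq 1$, hence $a_j \in \bbZ_p$; the cycle is therefore already a $\bbZ_p$-fundamental cycle, giving $\sv{M,\partial M}_{\bbZ_p} \leq \sv{M,\partial M}_{\bbQ_p}$. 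This is exactly Proposition~\ref{prop:ZpQp} of the paper, and it replaces your iterative division entirely.
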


We prove this in Section~\ref{subsec:aaprimes}.  
While the inequalities $\sv{M,\partial M}_{(\bbF_p)} \leq
\sv{M,\partial M}_{\bbZ_p}$ and $\sv{M,\partial M}_{\bbQ_p} \leq
\sv{M,\partial M}_{\bbZ_p}$ hold for all prime numbers $p$ (see
Corollary \ref{cor:sandwichp}) and $\sv{\args}_{(\bbF_p)}$ and $\sv{\args}_{\bbZ_p}$
exhibit similar behaviour, we are currently not aware of a single
example where one of these inequalities is strict.

\subsection{Homology estimates}

The $p$-adic simplicial volumes provide upper bounds for the Betti
numbers. The following result is given in Corollary \ref{cor:bettiZp}
and Corollary \ref{cor:bettiQp}.

\begin{theorem}
Let $M$ be an oriented compact connected manifold.
Then for all primes~$p$ and all~$n \in \bbN$ the Betti numbers satisfy
\begin{align*}
  b_n(M;\bbF_p) &\leq \sv{M,\partial M}_{\bbZ_p},\\
  b_n(M;\bbQ) &\leq \sv{M,\partial M}_{\bbQ_p}.
\end{align*}
\end{theorem}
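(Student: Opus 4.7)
The plan is to handle the two bounds separately.

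For the $\bbF_p$-Betti bound, the approach is to reduce modulo $p$. Given a $\bbZ_p$-fundamental cycle $c = \sum_j a_j \sigma_j$, its mod-$p$ reduction $\bar c$ is a $\bbF_p$-fundamental cycle. A term $a_j \sigma_j$ survives reduction precisely when $a_j \in \bbZ_p^{\times}$, that is, when $|a_j|_p = 1$; the remaining terms contribute $|a_j|_p \leq 1/p$. Hence $\sv{\bar c}_{(\bbF_p)} \leq \#\{j : |a_j|_p = 1\} \leq \sum_j |a_j|_p = \sv{c}_{\bbZ_p}$, so $\sv{M,\partial M}_{(\bbF_p)} \leq \sv{M,\partial M}_{\bbZ_p}$. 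I would then appeal to the standard Poincar\'e--Lefschetz estimate $b_n(M;\bbF_p) \leq \sv{M,\partial M}_{(\bbF_p)}$: the cap product with a fundamental cycle factors through the $\bbF_p$-span of the back faces of its simplices, so cohomology injects into a space whose dimension is at most the number of simplices.

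For the $\bbQ_p$-Betti bound, flatness of $\bbQ_p$ over $\bbQ$ first gives $b_n(M;\bbQ) = b_n(M;\bbQ_p) =: r$. Set $d = \dim M$. Using that Poincar\'e--Lefschetz duality over $\bbZ$ is unimodular on the free parts, I would pick $\bbZ$-valued cocycle representatives $\phi_1,\dots,\phi_r$ of a basis of the free part of $H^n(M;\bbZ)$ and $\psi_1,\dots,\psi_r$ of a dual basis of the free part of $H^{d-n}(M,\partial M;\bbZ)$ satisfying $\langle [\phi_i \cup \psi_j],[M,\partial M]\rangle = \delta_{ij}$. For any $\bbQ_p$-fundamental cycle $c = \sum_l a_l \sigma_l$, the Alexander--Whitney formula for the cup product yields
\[
  \delta_{ij} \;=\; \sum_l a_l\, A_{il}\, B_{jl},
\]
with $A_{il} := \phi_i(\sigma_l|_{[v_0,\dots,v_n]}) \in \bbZ$ and $B_{jl} := \psi_j(\sigma_l|_{[v_n,\dots,v_d]}) \in \bbZ$, i.e.\ the matrix identity $I_r = A D B^T$ with $D = \mathrm{diag}(a_1,\dots,a_k)$. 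Cauchy--Binet then gives
\[
  1 \;=\; \sum_{|S|=r} \det(A_{\cdot, S})\, \det(B_{\cdot, S}) \prod_{l \in S} a_l.
\]
The ultrametric inequality together with the integrality of $A, B$ (which forces every $r\times r$ minor to have $p$-adic norm at most $1$) now implies the existence of some $S$ of size $r$ with $\prod_{l \in S} |a_l|_p \geq 1$. The AM--GM inequality then yields $\sv{c}_{\bbQ_p} \geq \sum_{l \in S} |a_l|_p \geq r \bigl(\prod_{l \in S} |a_l|_p\bigr)^{1/r} \geq r$, and taking the infimum over $c$ finishes the proof.

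The main obstacle is arranging the integrality: one needs the unimodular cup product pairing on the free parts of integral cohomology for manifolds with boundary, and one needs cocycle representatives whose values on arbitrary singular simplices are honest integers, in order to guarantee that every minor determinant appearing in Cauchy--Binet has $p$-adic norm at most $1$. Once this setup is secured, the ultrametric plus AM--GM estimate is the core of the argument.
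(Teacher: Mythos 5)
Your proof of the first inequality is essentially the paper's route: the reduction $\bbZ_p\to\bbF_p$ plus the Poincar\'e--Lefschetz rank bound $b_n(M;\bbF_p)\leq\sv{M,\partial M}_{(\bbF_p)}$ is exactly the combination of Corollary~\ref{cor:sandwichp} and Proposition~\ref{prop:rel-Betti-bounds}.

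Your argument for the second inequality is correct but genuinely different from the paper's. The paper proceeds through $\bbZ/p^{m+1}\bbZ$-coefficients: it establishes a torsion estimate
\[
  \dim_{\bbF_p} p^m H_n(M;\bbZ/p^{m+1}\bbZ)\leq \sv{p^m[M,\partial M]}_{(\bbZ/p^{m+1}\bbZ)}
\]
(Proposition~\ref{prop:rel-Betti-bounds-2}), compares the trivial and $p$-adic seminorms on $\bbZ/p^{m+1}\bbZ$ (Corollary~\ref{cor:sandwichp}), uses the universal coefficient theorem to bound the left-hand side below by $b_n(M;\bbQ)$, and finally appeals to the scaling limit $\sv{M,\partial M}_{\bbQ_p}=\lim_m p^m\sv{p^m[M,\partial M]}_{\bbZ_p}$ from Proposition~\ref{prop:scalingp}. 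Your argument, by contrast, works directly with a single $\bbQ_p$-fundamental cycle: you pick integral cocycles $\phi_i,\psi_j$ dual under the unimodular Poincar\'e--Lefschetz pairing on torsion-free parts, observe via Alexander--Whitney that $I_r=ADB^T$ with $A,B$ integer matrices and $D=\mathrm{diag}(a_l)$, and then combine Cauchy--Binet, the ultrametric inequality (each $r\times r$ minor of $A$ and $B$ has $p$-adic norm at most $1$), and AM--GM. That chain is valid: $p$-adic absolute values of integers are at most $1$, so some subset $S$ of columns with $|S|=r$ has $\prod_{l\in S}|a_l|_p\geq 1$, and then $\sum_{l\in S}|a_l|_p\geq r(\prod_{l\in S}|a_l|_p)^{1/r}\geq r$. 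Your ``main obstacle'' about integrality of cocycle values is not really an obstacle, since singular $\bbZ$-cochains are by definition $\bbZ$-valued; the actual content you need is unimodularity of the cup-product pairing on free parts, which you correctly cite. Your route is shorter and avoids the scaling lemma and the mod-$p^m$ infrastructure; the paper's route has the advantage of producing the explicit $p$-torsion estimate in Proposition~\ref{prop:rel-Betti-bounds-2} as a reusable byproduct, and it stays within the paper's framework of monotonicity under ring maps. It is also worth noting that your Cauchy--Binet argument makes essential use of the ultrametric; over an archimedean field it would only yield $|c|_{1}\geq r\cdot\min_S|\det A_S\det B_S|^{-1/r}$, which is weaker, so the two approaches are not interchangeable in general.
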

The first inequality uses the well-known Poincar\'e duality
argument~\cite[Example~14.28]{lueckl2}\cite[Proposition~2.6]{loehfp}.
The second inequality is based on the 
additional torsion estimate (Propsition~\ref{prop:rel-Betti-bounds-2})
\[ \dim_{\bbF_p} p^{m} H_n(M;\bbZ/p^{m+1}\bbZ) \leq p^m \sv{ p^{m} \cdot [M,\partial M]}_{\bbZ_p} 
\]
and the observation that the right hand side converges to~$\sv{M,
  \partial M}_{\bbQ_p}$ as $m$ tends to infinity.  This suggests the
following question:

\begin{question}
  For which oriented compact connected manifolds~$M$ and which
  primes~$p$ is there a strict inequality
  \[
  \sv{M,\partial M}_{\bbQ_p}  < \sv{M,\partial M}_{\bbZ_p}
  \qor
  \sv{M,\partial M}_{(\bbF_p)}  < \sv{M,\partial M}_{\bbZ_p}?
  \]
  Is a strict inequality related to $p$-torsion in the homology of~$M$\;?
\end{question}

\subsection{Surfaces and approximation}

 In Section~\ref{sec:examples}, we compute the $p$-adic simplicial
 volumes for some examples.  In particular, we compute the weightless
 simplicial volume of surfaces.  Let
 $\Sigma_g$ be the oriented closed connected surface of genus~$g$. For~$b \geq
 1$ we write~$\Sigma_{g,b}$ to denote the surface of genus~$g$ with
 $b$ boundary components.
 
 \begin{theorem}
   \label{thm:surfaces}
  Let $R$ be an integral domain, equipped with the trivial absolute
  value.  Then
\begin{enumerate}
\item $\sv{\Sigma_g}_{(R)} = 4g -2$ for all~$g \in \bbN_{\geq 1}$ and
\item $\sv{\Sigma_{0,1}}_{(R)} = 1$ and
  $\sv{\Sigma_{g,b}}_{(R)} = 3b + 4g - 4$ for all~$g \in \bbN$ and all~$b \in \bbN_{\geq 1}$
  with~$(g,b) \neq (0,1)$.
\end{enumerate}
\end{theorem}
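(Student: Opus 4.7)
\emph{Upper bound.} For the closed surface~$\Sigma_g$ with~$g \geq 1$, I would use the fan triangulation of the standard $4g$-gon with side identifications $a_1b_1a_1^{-1}b_1^{-1}\cdots a_gb_ga_g^{-1}b_g^{-1}$, yielding a $\Delta$-complex structure on~$\Sigma_g$ with a single vertex and exactly~$4g-2$ two-simplices. The associated singular fundamental cycle carries~$\pm 1$ coefficients and, under the trivial seminorm on~$R$, has norm~$4g-2$. Analogous polygonal presentations realise~$\Sigma_{g,b}$ (for $(g,b) \neq (0,1)$) by a $\Delta$-complex with~$3b + 4g - 4$ triangles and~$\Sigma_{0,1}$ by a single triangle.

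\emph{Reduction to residue fields.} For the matching lower bound I would first reduce to the case that~$R$ is a field. For any maximal ideal $\mathfrak{m} \subset R$ the quotient $R \to K := R/\mathfrak{m}$ sends the relative $R$-fundamental class to the relative $K$-fundamental class (naturality with respect to change of coefficients), so coefficient reduction transforms a relative $R$-fundamental cycle into a relative $K$-fundamental cycle with no more nonzero terms. Hence $\sv{\Sigma_{g,b},\partial \Sigma_{g,b}}_{(K)} \leq \sv{\Sigma_{g,b},\partial \Sigma_{g,b}}_{(R)}$, and it suffices to prove the lower bound over fields.

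\emph{Lower bound via Euler characteristic.} Given a $K$-fundamental cycle $z = \sum_{i=1}^F a_i \sigma_i$ with~$F$ nonzero terms, the plan is to assemble a compact $2$-dimensional pseudo-manifold~$N$ together with a degree-one map $f \colon N \to \Sigma_{g,b}$ and a $\Delta$-complex structure on~$N$ with exactly~$F$ top cells (one for each~$\sigma_i$), the codimension-one faces being identified via a matching of face incidences compatible with the algebraic cancellations forced by the relative cycle condition. For such~$N$ the pseudo-manifold identities $3F = 2E_{\mathrm{int}} + E_\partial$ and $\chi(N) = V - F/2 - E_\partial/2$ hold; combined with $\chi(N) \leq \chi(\Sigma_{g,b})$ (since $f_*$ is surjective on~$H_*$, forcing $b_1(N) \geq b_1(\Sigma_{g,b})$), together with $V \geq E_\partial \geq b$ in the bordered case and $V \geq 1$ in the closed case, this yields $F \geq 3b + 4g - 4$ respectively $F \geq 4g - 2$. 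For~$\Sigma_{0,1}$ the bound $F \geq 1$ is trivial.

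\emph{Main obstacle.} The technical heart is producing the required matching of face incidences to build~$N$. Over a general field~$K$ the cycle condition $\partial z \equiv 0$ on an interior edge~$\tau$ only forces a weighted cancellation $\sum\pm a_i=0$, which may couple more than two incidences and does not admit a canonical pairing. The plan is to exploit the fact that, on the fixed support $\{\sigma_1,\ldots,\sigma_F\}$, the affine space of relative fundamental cycles is cut out by $\bbZ$-linear equations; existence of a $K$-cycle with this support then allows passage to a combinatorial $\pm 1$-cycle on the same support, whose cancellations give the desired pairwise gluings.
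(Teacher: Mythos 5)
Your upper bound and the Euler-characteristic arithmetic are fine (from $\chi(N) = V - F/2 - E_\partial/2 \leq \chi(\Sigma_{g,b})$, $V \geq E_\partial \geq b$, one indeed extracts $F \geq 3b + 4g - 4$, resp.\ $F \geq 4g-2$). The genuine gap is exactly the step you flag, and the fix you propose does not close it. Having a $K$-fundamental cycle supported on $\{\sigma_1,\dots,\sigma_F\}$ amounts to a $\bbZ$-defined affine subspace of $K^F$ having a point with all coordinates nonzero; there is no reason such a system should also have a $\{\pm 1\}$-valued point (already $x_1 = 2x_2$ over $\bbQ$ refutes the implication). Without $\pm 1$ coefficients the cycle condition on an interior edge is only a weighted relation such as $a_1 + a_2 - a_3 = 0$, which need not arise from any pairwise matching of incidences, so the pseudo-manifold $N$ need not exist and the Euler-characteristic computation has nothing to run on. (The paper's Remark~4.11 does build such an $N$, but only for $\bbZ$-cycles with $\pm 1$ coefficients representing a \emph{multiple} $m[\Sigma_g]$, where one is free to scale; in the weightless setting you cannot scale away non-unit coefficients without changing the count.)

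The paper's actual proof sidesteps the matching problem entirely by a rank count on the semi-simplicial set $X$ generated by a minimal-support $K$-cycle $c$ ($K$ the fraction field of $R$ works just as well as a residue field). No gluing is constructed; one only counts: the $k$ triangles have $3k$ face-incidences, at least $b$ of which lie in $\partial M$ (since $\partial c$ hits every boundary circle), and each interior $1$-simplex must appear in at least two incidences because $\partial c$ is supported on $\partial X$, giving $2\dim_K C_1(X,\partial X;K) \leq 3k - b$. Minimality of $k$ forces $\ker\partial_2 \subseteq C_2(X,\partial X;K)$ to be one-dimensional, so $\dim_K B_1(X,\partial X;K) = k-1$, and Lemma~\ref{lem:comparison-simplicial} gives $\dim_K H_1(X,\partial X;K) \geq \dim_K H_1(M,\partial M;K) = 2g + b - 1 + \delta_{b,0}$; combining yields $k \geq 4g + 3b - 4 + 2\delta_{b,0}$. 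If you want to keep your formulation, you would essentially have to prove this dimension count anyway, so you may as well drop the pseudo-manifold.
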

Using this result we compute the $\bbZ_p$-simplicial volume of all
surfaces (Corollary~\ref{cor:surfaces-p-adic}) and we give a new way
to compute the classicial simplicial volume of surfaces, which avoids
use of hyperbolic straightening (Remark~\ref{rem:new-computation}).
Moreover, Theorem~\ref{thm:surfaces} also shows that surfaces satisfy
mod~$p$ and $p$-adic approximation of simplicial volume
(Remark~\ref{rem:stable}).

\subsection{Non-values}

Recent results show that classical simplicial volumes are right
computable~\cite{heuerloeh_trans}, which in particular allows to
give explicit examples of real numbers that cannot occur as the
simplicial volume of a manifold. Based on the same methods we
establish that also the $p$-adic
simplicial volumes $\sv{M}_{\bbZ_p}$ and $\sv{M}_{\bbQ_p}$ are right
computable; see Proposition~\ref{prop:rightcomp}.

\subsection*{Acknowledgements}

C.L.\ was supported by the CRC~1085 \emph{Higher Invariants}
(Universit\"at Regensburg, funded by the DFG).

\section{Foundations}\label{sec:foundations}

We introduce simplicial volumes with coefficients in rings with a
submultiplicative seminorm, e.g., an absolute value. In particular, we
obtain $p$-adic versions of simplicial volume. Moreover, we establish
some basic inheritance and comparison properties of such simplicial
volumes similar to those already known in the classical or weightless
case.

\subsection{Simplicial volume}\label{sec:simplicial-volume-def}

Let $R$ be a commutative ring with unit. A \emph{seminorm} on~$R$ is
a function $|\cdot|\colon R \to \bbR_{\geq 0}$ with $|1| = 1$ that is
submultiplicative
\[ |st| \leq |s| |t|\] 
and satisfies the triangle inequality
\[|s+t| \leq |s|+ |t|\]
for all $s,t \in R$. If the seminorm is multiplicative, it is called
an \emph{absolute value}.  A \emph{seminormed ring} is pair~$(R,
|\cdot|)$, consisting of a commutative ring~$R$ with unit and a
seminorm~$|\cdot|$ on~$R$.  A seminormed ring $(R, |\cdot|)$ is a
\emph{normed ring} if $|\cdot|$ is an absolute value.

Seminormed rings give rise to a notion of simplicial volume:

\begin{definition}[simplicial volume]
  Let $(R, |\cdot|)$ be a seminormed ring.  Let $M$ be an oriented
  compact connected $d$-mani\-fold, and let $Z(M, \partial M;R)
  \subset C_d(M;R)$ be the set of all relative singular
  $R$-fundamental cycles of~$(M,\partial M)$. Then the
  \emph{simplicial volume of~$M$ with $R$-coefficients} is defined as
  \[ \sv{M ,\partial M}_R
  := \inf\biggl\{ \sum_{j=1}^k |a_j|
  \biggm| \sum_{j=1}^k a_j \cdot \sigma_j \in Z(M,\partial M;R)
  \biggr\}
  \in \bbR_{\geq 0}.
  \]
\end{definition}

\begin{example}[classical simplicial volume]
  The usual norm on~$\bbR$ is an absolute value on~$\bbR$. The
  corresponding simplicial volume~$\sv{\args} := \sv{\args}_\bbR$ is
  the classicial simplicial volume, introduced by
  Gromov~\cite{munkholm,Gromov-vbc}.

  Similarly, the usual norm on~$\bbZ$ is an absolute value
  on~$\bbZ$. The corresponding simplicial volume is denoted
  by~$\sv{\args}_\bbZ$, the so-called \emph{integral simplicial
    volume}. Integral simplicial volume admits lower bounds in terms
  of Betti numbers~\cite[Example~14.28]{lueckl2}, logarithmic homology
  torsion~\cite{sauervolgrowth}, and the rank gradient of the fundamental
  group~\cite{loehrg}.
\end{example}

\begin{example}[weightless simplicial volume]\label{ex:weightless}
Every non-trivial commutative unital ring $R$ can be equipped with the
\emph{trivial} seminorm
\begin{align*}
    |\cdot|_{\text{triv}} \colon R & \longrightarrow \bbR_{\geq 0} \\
    x & \longmapsto 1- \delta_{x,0}.
  \end{align*}
The simplicial volume corresponding to the trivial seminorm will be
called \emph{weightless} and will be denoted by~$\sv{M ,\partial
  M}_{(R)}$.  The weightless simplicial volume over finite fields $R =
\bbF_p$ has been studied before~\cite{loehfp}.
\end{example}

\begin{example}[$p$-adic simplicial volumes]
  Let $p$ be a prime number. The ring of $p$-adic integers is denoted
  by $\bbZ_p$ and the field of $p$-adic numbers by~$\bbQ_p$.  The
  usual $p$-adic absolute value gives rise to two (possibly distinct)
  notions of $p$-adic simplicial volume, namely, $\sv{\args}_{\bbZ_p}$
  and $\sv{\args}_{\bbQ_p}$, respectively.
\end{example}

\begin{lemma}\label{lem:quotient-seminorm}
Let $(R,|\cdot|)$ be a seminormed ring and let $I \subset R$ be a
proper ideal.  We define
\[|s+I|_{R/I} := \inf\{ |s+i | \mid i \in I \} \]
for all $s + I \in R/I$.
Then $|\cdot|_{R/I}$ is a seminorm on the
quotient ring $R/I$.
\end{lemma}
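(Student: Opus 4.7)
The plan is to verify in turn the axioms of a seminorm for $|\cdot|_{R/I}$. Well-definedness on cosets is automatic: if $s - s' \in I$, then $\{s+i : i \in I\} = \{s'+i : i \in I\}$; and taking $i = 0$ gives $|0+I|_{R/I} = 0$ and $|1+I|_{R/I} \leq 1$ immediately. The triangle inequality follows from a standard near-infimum argument: for $\epsilon > 0$, pick $i_1, i_2 \in I$ with $|s+i_1| < |s+I|_{R/I} + \epsilon$ and $|t+i_2| < |t+I|_{R/I} + \epsilon$; since $i_1 + i_2 \in I$, the triangle inequality in $R$ applied to the representative $(s+t)+(i_1+i_2)$ of $(s+t)+I$ yields $|(s+t)+I|_{R/I} \leq |s+I|_{R/I} + |t+I|_{R/I} + 2\epsilon$, and letting $\epsilon \to 0$ concludes this step.

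For submultiplicativity, the key algebraic observation is
\[ (s+i_1)(t+i_2) = st + \bigl(s i_2 + i_1 t + i_1 i_2\bigr), \]
where the correction term lies in $I$ precisely because $I$ is an ideal; this is where the ideal property (rather than merely additive subgroup) is essential. Hence $(s+i_1)(t+i_2)$ is a representative of the coset $st + I$, and submultiplicativity in $R$ gives $|(st)+I|_{R/I} \leq |s+i_1|\,|t+i_2|$; taking the infimum over $i_1, i_2 \in I$ finishes this step.

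The main obstacle is the normalization $|1+I|_{R/I} = 1$. The upper bound has already been noted. For the matching lower bound, submultiplicativity applied to $1 = 1 \cdot 1$ in the quotient gives $|1+I|_{R/I} \leq |1+I|_{R/I}^2$, so $|1+I|_{R/I} \in \{0\} \cup [1, \infty)$; combined with the upper bound this forces $|1+I|_{R/I} \in \{0, 1\}$. Excluding the degenerate value $0$ is the delicate point: this is where the hypothesis that $I$ is proper must be used in combination with $|1| = 1$ to ensure that the coset $1+I$ does not contain elements of arbitrarily small seminorm, thereby pinning down $|1+I|_{R/I} = 1$ and completing the verification.
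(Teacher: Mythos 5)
Your verification of the triangle inequality and submultiplicativity is correct and follows the same route as the paper (which writes out only submultiplicativity and asserts the triangle inequality is ``similar''); the near-infimum framing is a cosmetic variant of the paper's nested-infimum calculation, and both rest on the observation that $(s+i_1)(t+i_2)$ represents $st+I$ because the cross terms lie in the ideal.

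However, the normalization $|1+I|_{R/I}=1$ is a genuine gap, and one that cannot be closed as you suggest. You correctly deduce $|1+I|_{R/I}\in\{0,1\}$ and then assert that properness of $I$ together with $|1|=1$ ``must'' rule out the value $0$, but no argument is supplied, and in fact none exists at this level of generality. Take $R=\bbZ$ with the $2$-adic absolute value and $I=3\bbZ$, a proper ideal. For every $n$ the congruence $3k\equiv -1\pmod{2^n}$ is solvable in $k\in\bbZ$, so $|1+3k|_2\le 2^{-n}$ for suitable $k$; hence $|1+I|_{R/I}=\inf_{k\in\bbZ}|1+3k|_2=0$, and the induced ``seminorm'' on $\bbZ/3\bbZ$ vanishes identically, violating the $|1|=1$ axiom. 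Notably, the paper's own proof also passes over this point in silence --- it checks only submultiplicativity and the triangle inequality --- so the lemma as stated is too general. It does hold in the one place the paper invokes it, namely $R=\bbZ_p$ with the $p$-adic absolute value and $I=p^m\bbZ_p$, because there $|1+p^mk|_p=1$ for all $k\in\bbZ_p$. Your instinct to flag the normalization as the delicate step was sound; the error is in asserting that it resolves when in fact it requires an extra hypothesis on $I$.
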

\begin{proof}
We verify the submultiplicativity. Indeed, for all $s,t\in R$ one
obtains
\begin{align*}
|(s+I)(t+I)|_{R/I} &= \inf\{ |st+i| \mid i\in I\} \leq  \inf\{ |(s+i)(t+j)| \mid i,j\in I\} \\
   &\leq\inf\{ |(s+i)| |(t+j)| \mid i,j\in I\} \leq |s+I|_{R/I} |t+I|_{R/I}.
\end{align*}
The triangle inequality follows from a similar argument.
\end{proof}
\begin{example}[seminorms on  $\bbZ/p^m\bbZ$]
There are two distinct seminorms on the rings $\bbZ/p^m\bbZ$ that 
will play a role in this article.  Using
Lemma~\ref{lem:quotient-seminorm}, the $p$-adic absolute
value~$|\cdot|_p$ on $\bbZ_p$ induces a seminorm on $\bbZ/p^m\bbZ$. We
will also denote this seminorm by $|\cdot|_p$; for $x \neq 0$ it is
given by
\[ |x|_p = p^{-r}\]
if $x$ lies in~$p^r \bbZ/p^m\bbZ$ but not in $p^{r+1}\bbZ/p^m\bbZ$. The
corresponding simplicial volume will be denoted
by~$\|\cdot\|_{\bbZ/p^m\bbZ}$.

As in Example \ref{ex:weightless} the rings $\bbZ/p^m\bbZ$ can be
equipped with the trivial seminorm, which
induces the weightless simplicial volume~$\|
\cdot\|_{(\bbZ/p^m\bbZ)}$.
\end{example}

\subsection{Changing the seminorm}
Let $R$ be a commutative ring with unit.  We denote by
$\mathcal{S}(R)$ the set of all seminorms on $R$.  We equip the space
of all seminorms with the topology of pointwise convergence, for which
a basis of open neighbourhoods of a seminorm $\alpha$ is given by the
sets
\[	
U_{\text{pw}}(\varepsilon, F)
:= \bigl\{ \beta \in \mathcal{S}(R)
\bigm| \fa{x \in F} |\beta(x)-\alpha(x)| < \varepsilon
\bigr\}, 
\]
where $\varepsilon \in \bbR_{>0}$ and $F$ is a finite subset of $R$.
For every oriented compact connected manifold $M$, the simplicial
volume defines a function
\[
	\Vert M, \partial M \Vert_{\bullet} \colon \mathcal{S}(R) \to \bbR_{\geq 0}.
\]
\begin{proposition}[upper semi-continuity]
Let $M$ be an oriented compact connected manifold.  The simplicial
volume function $\Vert M, \partial M \Vert_{\bullet}$ is upper
semi-continuous with respect to the topology of pointwise convergence.
\end{proposition}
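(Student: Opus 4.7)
The plan is to use the definition of upper semi-continuity directly: given $\alpha \in \mathcal{S}(R)$ and $\varepsilon > 0$, I need to exhibit a pointwise-convergence neighbourhood $U$ of $\alpha$ such that $\Vert M,\partial M\Vert_\beta < \Vert M,\partial M\Vert_\alpha + \varepsilon$ for every $\beta \in U$. The crucial observation is that whether a singular chain $\sum_{j=1}^k a_j \sigma_j$ represents a fundamental cycle depends only on the ring structure of $R$ and not on the seminorm. Hence a cycle that is nearly optimal for $\alpha$ can be reused as a test cycle for every nearby $\beta$; only its measured length changes, and by a controllable amount.

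First I would choose, by definition of the infimum, a relative fundamental cycle
\[
c = \sum_{j=1}^k a_j \cdot \sigma_j \in Z(M,\partial M;R)
\]
with $\sum_{j=1}^k \alpha(a_j) < \Vert M,\partial M\Vert_\alpha + \varepsilon/2$. The coefficients $F := \{a_1,\dots,a_k\} \subset R$ form a finite set. Next I would set
\[
U := U_{\text{pw}}\bigl(\varepsilon/(2k),\, F\bigr),
\]
which is an open neighbourhood of $\alpha$ by definition of the topology of pointwise convergence.

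For any $\beta \in U$, the estimate $|\beta(a_j) - \alpha(a_j)| < \varepsilon/(2k)$ for each $j \in \{1,\dots,k\}$ yields
\[
\sum_{j=1}^k \beta(a_j)
\leq \sum_{j=1}^k \alpha(a_j) + k \cdot \frac{\varepsilon}{2k}
< \Vert M,\partial M\Vert_\alpha + \varepsilon.
\]
Since $c$ still represents a fundamental cycle (the cycle condition is purely algebraic), it is admissible for computing $\Vert M,\partial M\Vert_\beta$, and therefore
\[
\Vert M,\partial M\Vert_\beta \leq \sum_{j=1}^k \beta(a_j) < \Vert M,\partial M\Vert_\alpha + \varepsilon.
\]
This establishes upper semi-continuity at $\alpha$.

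There is no real obstacle: the proof is a direct unpacking of the two definitions. The only subtlety worth noting is that the relevant finite set $F$ for the neighbourhood must be extracted from an $\varepsilon/2$-almost-optimal cycle rather than from an exactly optimal one (which need not exist), and that the dependence of $F$ on $\varepsilon$ is precisely why one obtains upper, not two-sided, semi-continuity.
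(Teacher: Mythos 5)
Your proof is correct and follows essentially the same route as the paper's: pick an $\varepsilon/2$-almost-optimal fundamental cycle for $\alpha$, and control its $\ell^1$-length under any $\beta$ in the pointwise neighbourhood $U_{\text{pw}}(\varepsilon/2k,\{a_1,\dots,a_k\})$. Your explicit remark that membership in $Z(M,\partial M;R)$ is a purely algebraic condition independent of the seminorm is the key point, which the paper leaves implicit.
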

\begin{proof}
Let $\alpha \in \mathcal{S}(R)$ and let $\varepsilon > 0$.
Take a relative fundamental cycle $c = \sum_{j = 1}^k a_j \sigma_j \in Z(M,\partial M;R)$
with $|c|_{\alpha,1} < \Vert M, \partial M \Vert_{\alpha} + \varepsilon/2$.
Now every seminorm $\beta \in U_{\text{pw}}(\varepsilon/2k, \{a_1, \dots, a_k\})$ satisfies
\[
	\Vert M, \partial M \Vert_{\beta} \leq |c|_{\beta,1} \leq |c|_{\alpha,1} + \varepsilon/2 < \Vert M, \partial M \Vert_{\alpha} + \varepsilon
\]
and we deduce that the simplicial volume is upper semi-continuous with
respect to the topology of pointwise convergence.
\end{proof}

\subsection{Changing the coefficients}

\begin{proposition}[monotonicity]\label{prop:monotonicity}
  Let $(R, |\cdot|_R)$ and $(S,|\cdot|_S)$ be seminormed rings and let
  $f \colon R \longrightarrow S$ be a unital ring homomorphism that
  for some~$\lambda > 0$ satisfies~$|f(x)|_S \leq \lambda \cdot |x|_R$
  for all~$x \in R$. Then
  \[ \sv {M,\partial M}_S \leq \lambda \cdot \sv{M,\partial M}_R
  \]
  holds for all oriented compact connected manifolds~$M$.
\end{proposition}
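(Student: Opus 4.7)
The plan is to push forward relative fundamental cycles along the induced chain map and track the effect on the $\ell^1$-type expression defining the simplicial volume. More precisely, the unital ring homomorphism $f\colon R\to S$ induces, coefficient-wise, a chain map
\[
f_*\colon C_*(M;R)\longrightarrow C_*(M;S),\quad
\sum_{j=1}^k a_j\cdot\sigma_j \longmapsto \sum_{j=1}^k f(a_j)\cdot\sigma_j,
\]
which is compatible with the boundary operator and with the inclusion of~$C_*(\partial M;\cdot)$. Because $f(1)=1$, the map $f_*$ carries the relative fundamental class of~$(M,\partial M)$ with $R$-coefficients to the relative fundamental class with $S$-coefficients; in particular $f_*(Z(M,\partial M;R))\subset Z(M,\partial M;S)$.

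Given $\varepsilon>0$, I would pick a relative $R$-fundamental cycle $c=\sum_{j=1}^k a_j\sigma_j\in Z(M,\partial M;R)$ with
\[
\sum_{j=1}^k |a_j|_R < \sv{M,\partial M}_R + \varepsilon.
\]
Then $f_*(c)=\sum_{j=1}^k f(a_j)\sigma_j$ is a relative $S$-fundamental cycle by the previous paragraph, and the hypothesis $|f(x)|_S\leq\lambda\cdot|x|_R$ gives
\[
\sv{M,\partial M}_S \leq \sum_{j=1}^k |f(a_j)|_S \leq \lambda\sum_{j=1}^k |a_j|_R < \lambda\cdot\sv{M,\partial M}_R + \lambda\varepsilon.
\]
Letting $\varepsilon\to 0$ yields the claimed inequality.

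There is essentially no obstacle: everything reduces to checking that the coefficient-wise extension of a unital ring homomorphism gives a chain map that preserves fundamental classes, which is a standard naturality statement for singular homology with coefficients. The only small point worth noting explicitly in the write-up is that $f$ is assumed merely to be a ring map (not norm-preserving), so one must keep the factor~$\lambda$ throughout the estimate and not appeal to any isometric behaviour.
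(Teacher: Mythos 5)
Your proof is correct and takes essentially the same route as the paper: both push relative fundamental cycles forward along the chain map induced coefficient-wise by $f$, use unitality to see that fundamental cycles map to fundamental cycles, and then estimate the $\ell^1$-expression via the bound $|f(x)|_S \leq \lambda |x|_R$. The paper phrases the final estimate in terms of the operator norms $\|C_*(\id_M;f)\|$ and $\|H_*(\id_M;f)\|$ rather than an explicit $\varepsilon$-argument, but this is only a cosmetic difference.
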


\begin{proof}
  As $f$ is unital, the chain map~$C_*(\id_M;f) \colon
  C_*(M;R) \longrightarrow C_*(M;S)$ induced by~$f$ maps relative
  $R$-fundamental cycles to relative $S$-fundamental cycles
  of~$(M,\partial M)$. Moreover, $\|C_*(\id_M;f)\|\leq \lambda$
  (whence $\|H_*(\id_M;f)\|\leq \lambda$), because $\|f\| \leq \lambda$.
  Therefore,
  \[ \sv{M,\partial M}_S
     = \bigl\| [M,\partial M]_S \bigr\|_S
     = \bigl\| H_*(\id_M;f)([M,\partial M]_R)\bigr\|_S
     \leq \lambda \sv{M,\partial M}_R,
  \]
  as claimed.
\end{proof}

\begin{corollary}[universal integral bound]\label{cor:universalZ}
  Let $R$ be a seminormed ring and let $M$ be an oriented
  compact connected manifold. Then
  \[ \sv{M,\partial M}_R \leq \sv{M,\partial M}_{\bbZ}.
  \]
\end{corollary}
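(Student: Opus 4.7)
The plan is to deduce this immediately from Proposition~\ref{prop:monotonicity} (monotonicity) applied to the unique unital ring homomorphism $f\colon \bbZ \to R$ determined by $f(1) = 1_R$. To invoke monotonicity with constant~$\lambda = 1$, I need to verify that $|f(n)|_R \leq |n|_\bbZ = |n|$ for every integer~$n$.

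First I would write $f(n) = n \cdot 1_R$, where for $n \geq 0$ this is the $n$-fold sum $1_R + \cdots + 1_R$, and for $n < 0$ it is the additive inverse. Using the triangle inequality of the seminorm $|\cdot|_R$ iteratively, together with the normalization $|1_R|_R = 1$ built into the definition of a seminormed ring, one obtains $|n \cdot 1_R|_R \leq |n| \cdot |1_R|_R = |n|$. For negative~$n$ one also needs $|-x|_R = |x|_R$; this follows from the seminorm axioms since $|-1_R|_R^2 \leq$ (submultiplicativity) and triangle inequality give $|-1_R|_R \leq 1$, hence $|-x|_R \leq |x|_R$ and symmetrically the reverse.

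Having established $|f(n)|_R \leq |n|$ for all $n \in \bbZ$, Proposition~\ref{prop:monotonicity} with~$\lambda = 1$ yields
\[
  \sv{M,\partial M}_R \leq 1 \cdot \sv{M,\partial M}_\bbZ = \sv{M,\partial M}_\bbZ,
\]
which is the desired inequality.

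There is essentially no obstacle here; the only mildly delicate point is the verification of $|-1_R|_R \leq 1$ to handle negative integers, but this is a routine consequence of submultiplicativity applied to~$(-1_R)^2 = 1_R$ together with $|1_R|_R = 1$. Everything else is a direct specialization of the already-proved monotonicity statement.
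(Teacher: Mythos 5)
Your overall strategy is exactly the paper's: apply Proposition~\ref{prop:monotonicity} with $\lambda = 1$ to the canonical unital homomorphism $\bbZ \to R$, after checking $|f(n)|_R \le |n|$ for all $n\in\bbZ$. The iterated triangle-inequality estimate $|n\cdot 1_R|_R \le n\,|1_R|_R = n$ for $n \ge 0$ is fine.

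The flaw is in your treatment of negative integers. Submultiplicativity applied to $(-1_R)^2 = 1_R$ gives $1 = |1_R|_R = |(-1_R)^2|_R \le |-1_R|_R^2$, i.e.\ $|-1_R|_R \ge 1$ --- the \emph{opposite} of the inequality $|-1_R|_R \le 1$ you assert, and the triangle inequality does not bound $|-1_R|_R$ from above either. If $|\cdot|_R$ is a multiplicative absolute value, then $|-1_R|_R^2 = |1_R|_R = 1$ forces $|-1_R|_R = 1$ and your argument closes; but a seminorm as defined in Section~\ref{sec:simplicial-volume-def} is only submultiplicative, and $|-1_R|_R > 1$ is genuinely possible. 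For instance, on $R = \bbZ$ set $|n| := n$ for $n \ge 0$ and $|n| := 10\,|n|$ for $n < 0$; one checks directly that this satisfies $|1|=1$, submultiplicativity, and the triangle inequality, yet $|-1| = 10$, so the hypothesis of Proposition~\ref{prop:monotonicity} fails with $\lambda = 1$. (In fact the corollary itself fails for this $R$ and $M = S^2$: every singular $2$-cycle has coefficient sum zero, so every fundamental cycle has a negative coefficient, which forces $\sv{S^2}_R \ge 11 > 2 = \sv{S^2}_\bbZ$.) To be fair, the paper's own one-line proof (``It follows from the triangle inequality\ldots'') glosses over the same point, and $|-1_R|_R = 1$ does hold for every seminormed ring the paper actually uses; but your attempted derivation of $|-1_R|_R \le 1$ from the stated axioms is incorrect and cannot be repaired without strengthening the definition of a seminorm (e.g.\ requiring $|-x| = |x|$) or restricting to absolute values.
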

\begin{proof}
  It follows from the triangle inequality that the canonical unital
  ring homomorphism~$\bbZ \longrightarrow R$ satisfies the hypotheses
  of Proposition~\ref{prop:monotonicity} with the factor~$\lambda=1$.
\end{proof}

\begin{corollary}\label{cor:sandwichp}
  Let $p$ be a prime number and let $M$ be an oriented compact connected
  manifold.
  Then the following inequalities hold for all $m \geq 1$:
  \begin{enumerate}
  \item $ 	\sv {M,\partial M}_{(\bbF_p)}
       		\leq \sv{M,\partial M}_{\bbZ/p^m\bbZ} 
  		  	\leq \sv {M,\partial M}_{\bbZ_p}
      		\leq \sv {M,\partial M}_{\bbZ}$

  \item $ 	\sv {M,\partial M}_{\bbQ_p}
    		\leq \sv {M,\partial M}_{\bbZ_p}
      		\leq \sv {M,\partial M}_{\bbZ}$.
		
  \item $	\sv{M,\partial M}_{\bbZ/p^{m}\bbZ}
  			\leq \sv{M,\partial M}_{\bbZ/p^{m+1}\bbZ}$.
   \item\label{it:modpm-sandwich} $ \sv{M,\partial M}_{\bbZ/p^{m}\bbZ}
  			\leq \sv{M,\partial M}_{(\bbZ/p^{m}\bbZ)}
			\leq p^{m-1}\sv{M, \partial M}_{\bbZ/p^m\bbZ} $
      \end{enumerate}
\end{corollary}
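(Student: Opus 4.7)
All four items follow by repeatedly invoking the monotonicity principle (Proposition \ref{prop:monotonicity}). The plan is to exhibit, for each inequality, a unital ring homomorphism $f$ between the two relevant seminormed rings together with the smallest constant $\lambda$ such that $|f(x)|_S \leq \lambda \cdot |x|_R$ for all $x \in R$. The two inequalities $\sv{M,\partial M}_{\bbZ_p} \leq \sv{M,\partial M}_{\bbZ}$ in (1) and (2) are immediate from Corollary \ref{cor:universalZ}.

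For the remaining inequalities in (1) and for (3), I would exploit the fact that, by Lemma \ref{lem:quotient-seminorm}, the $p$-adic seminorm on $\bbZ/p^m\bbZ$ is by construction the quotient of the $p$-adic absolute value on $\bbZ_p$. Hence the canonical projection $\bbZ_p \to \bbZ/p^m\bbZ$ is seminorm-nonincreasing with $\lambda = 1$. The projection $\bbZ/p^{m+1}\bbZ \to \bbZ/p^m\bbZ$ is handled in the same way, using that $p^{m+1}\bbZ_p \subset p^m\bbZ_p$ enlarges the set over which the infimum defining the quotient seminorm is taken. For $\bbZ/p^m\bbZ \to \bbF_p$ one checks by cases: elements of $p\bbZ/p^m\bbZ$ map to $0$, whereas units have $p$-adic seminorm $1$, matching the trivial seminorm of their image; this gives $\lambda = 1$. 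Inequality (2) then follows from the isometric inclusion $\bbZ_p \hookrightarrow \bbQ_p$, again with $\lambda = 1$.

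For (4) I would take $f$ to be $\id_{\bbZ/p^m\bbZ}$, but with different seminorms on the source and target. The left inequality is obtained by viewing the identity as a map from the trivially seminormed ring to the $p$-adically seminormed ring: since $|x|_p \leq 1 = |x|_{\mathrm{triv}}$ for every nonzero $x$, the constant $\lambda = 1$ suffices. For the right inequality one reverses the direction, and uses that every nonzero element of $\bbZ/p^m\bbZ$ has $p$-adic seminorm at least $p^{-(m-1)}$, so that $|x|_{\mathrm{triv}} \leq p^{m-1} \cdot |x|_p$ for all $x$, producing the factor $\lambda = p^{m-1}$.

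The argument is essentially bookkeeping, with no real obstacle beyond keeping track of the direction of the ring homomorphism (which is opposite to the direction of the inequality on simplicial volumes) and verifying the numerical bound between the two seminorms in each case; once those are in place, Proposition \ref{prop:monotonicity} delivers the conclusion mechanically.
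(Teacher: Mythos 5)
Your proof is correct and takes essentially the same approach as the paper: applying Proposition~\ref{prop:monotonicity} (with $\lambda=1$) to the canonical projections $\bbZ_p\to\bbZ/p^{m+1}\bbZ\to\bbZ/p^m\bbZ\to\bbF_p$, the inclusion $\bbZ_p\hookrightarrow\bbQ_p$, and Corollary~\ref{cor:universalZ}, and then comparing the trivial and $p$-adic seminorms on $\bbZ/p^m\bbZ$ via $|\cdot|_p\leq|\cdot|_{\mathrm{triv}}\leq p^{m-1}|\cdot|_p$ for item~(4). You merely spell out the $\lambda=1$ verifications in a bit more detail than the paper does.
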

\begin{proof}
  For the first three assertions we only need to apply
  Proposition~\ref{prop:monotonicity} with $\lambda = 1$ (and
  Corollary~\ref{cor:universalZ}) to the canonical projections
  \[\bbZ_p \longrightarrow \bbZ/p^{m+1}\bbZ \longrightarrow \bbZ/p^m\bbZ \longrightarrow \bbF_p\]
  and to the canonical inclusion~$\bbZ_p \longrightarrow \bbQ_p$.  The
  last assertion follows from Proposition \ref{prop:monotonicity} and
  the inequalities
  \[ |\cdot|_p \leq |\cdot|_{\text{triv}} \leq p^{m-1}|\cdot|_p\]
  between the $p$-adic and the trivial seminorm on the ring $\bbZ/p^m\bbZ$.
\end{proof}

\begin{proposition}[density]\label{prop:density}
  Let $(R,|\cdot|_R)$ and $(S,|\cdot|_S)$ be seminormed rings and let
  $f\colon R \longrightarrow S$ be unital ring homomorphism with
  $|\cdot|_S$-dense image. If $|f(x)|_S = |x|_R$ for all $x \in R$,
  then
    \[ \sv{M,\partial M}_R = \sv{M,\partial M}_S \]
    holds for all oriented compact connected manifolds~$M$.
\end{proposition}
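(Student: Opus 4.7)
The plan is to establish the two inequalities separately. The inequality $\sv{M,\partial M}_S \leq \sv{M,\partial M}_R$ is immediate from Proposition~\ref{prop:monotonicity} with $\lambda = 1$, since $|f(x)|_S = |x|_R$.

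For the reverse inequality, the strategy is to start with a near-optimal $S$-fundamental cycle and then use density to produce a nearby $R$-fundamental cycle of comparable norm. Fix $\epsilon > 0$ and pick, once and for all, an auxiliary relative $R$-fundamental cycle $z_R \in Z(M,\partial M;R)$; such a $z_R$ exists because $Z(M,\partial M;\bbZ)$ is non-empty and maps into $Z(M,\partial M;R)$ via $\bbZ \to R$. Then choose $c_S \in Z(M,\partial M;S)$ with $|c_S|_{S,1} < \sv{M,\partial M}_S + \epsilon/2$. Since $f_*(z_R)$ and $c_S$ both represent the fundamental class $[M,\partial M]_S$, they differ by a relative boundary, so one can write
\[ c_S - f_*(z_R) = \partial e_S + \iota_* g_S \]
for some $e_S \in C_{d+1}(M;S)$ and $g_S \in C_d(\partial M;S)$, where $\iota\colon \partial M \hookrightarrow M$ denotes the inclusion.

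Now the density hypothesis enters: $e_S$ and $g_S$ are finite sums, so by approximating each of their finitely many $S$-coefficients by elements of the form $f(a)$ with $a \in R$, I obtain chains $e_R \in C_{d+1}(M;R)$ and $g_R \in C_d(\partial M;R)$ such that $|f_*(e_R) - e_S|_{S,1}$ and $|f_*(g_R) - g_S|_{S,1}$ are arbitrarily small. Setting $c_R := z_R + \partial e_R + \iota_* g_R \in C_d(M;R)$, a direct check shows $c_R \in Z(M,\partial M;R)$ (its boundary $\partial z_R + \iota_* \partial g_R$ is supported on $\partial M$) and $[c_R] = [z_R] = [M,\partial M]_R$ (since $c_R - z_R$ is a relative boundary), so $c_R$ is a relative $R$-fundamental cycle. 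Because $|f(x)|_S = |x|_R$, the $\ell^1$-norm is preserved under $f_*$, giving $|c_R|_{R,1} = |f_*(c_R)|_{S,1}$; using the identity
\[ f_*(c_R) = c_S + \partial\bigl(f_*(e_R) - e_S\bigr) + \iota_*\bigl(f_*(g_R) - g_S\bigr), \]
one bounds $|c_R|_{R,1}$ by $|c_S|_{S,1}$ plus an error controlled by the two approximations. Choosing them fine enough yields $|c_R|_{R,1} < \sv{M,\partial M}_S + \epsilon$, and letting $\epsilon \to 0$ gives $\sv{M,\partial M}_R \leq \sv{M,\partial M}_S$.

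The main obstacle to avoid is that a naive coefficient-wise approximation of $c_S$ itself will in general fail to be a cycle, since the boundary relations need not close up in the image of $f$. The key device is to apply the approximation not to $c_S$ but to a chosen decomposition of $c_S - f_*(z_R)$ as a relative boundary; adding back the genuine $R$-cycle $z_R$ then makes the result a relative $R$-fundamental cycle by construction, leaving only the norm estimate to verify.
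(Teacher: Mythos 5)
Your proof is correct and follows the same strategy as the paper: fix an auxiliary $R$-fundamental cycle, express its difference from a near-optimal $S$-fundamental cycle as a (relative) boundary, and approximate that boundary coefficientwise by an $R$-chain using density, then transfer the norm estimate via $|f(\cdot)|_S = |\cdot|_R$. You are in fact slightly more careful than the paper's write-up, which records $c - C_d(\id_M;f)(c')$ as an absolute boundary $\partial_{d+1}(x)$ and thus suppresses the extra chain $\iota_* g_S$ supported on $\partial M$ that is needed when $\partial M \neq \emptyset$; your explicit treatment of that term closes this small gap without changing the argument.
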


\begin{proof}
The inequality $\sv{M,\partial M}_S \leq \sv{M,\partial M}_R$ follows
from Proposition~\ref{prop:monotonicity}. The converse inequality
works as in the classical case, by approximating boundaries of
chains~\cite[Lemma~2.9]{mschmidt}.
  
   We briefly recall the argument. Let $d = \dim(M)$ and let
   $\varepsilon >0$. Take a fundamental cycle $c \in Z(M,\partial
   M;S)$ with $|c|_{1,S} \leq \sv{M, \partial M}_S + \varepsilon$ and
   some fundamental cycle in $c' \in Z(M,\partial M;R)$. Then $b = c -
   C_d(\id_M;f)(c')$ is a boundary, i.e., $b = \partial_{d+1}(x)$ for
   some $x \in C_{d+1}(M;S)$.  As the image of $f$ is dense, we find
   an element $x' \in C_{d+1}(M;R)$ that satisfies
 \[|C_d(\id_M;f)(x')-x|_{1,S} \leq \varepsilon.\]
 Then $c' + \partial_{d+1}(x')$ is a fundamental cycle in
 $Z(M,\partial M;R)$, which satisfies
 \begin{align*} 
     \sv{M, \partial M}_R \leq |c' + \partial_{d+1}(x')|_{1,R} &= |C_d(\id_M;f)(c' + \partial_{d+1}(x'))|_{1,S}\\
     &= \left|c - b + \partial_{d+1}C_d(\id_M;f)(x') \right|_{1,S}\\
     &= \left| c - \partial_{d+1}\left(C_d(\id_M;f)(x') - x\right) \right|_{1,S}\\
     &\leq \sv{M, \partial M}_S + \varepsilon + (d+2)\varepsilon.
 \end{align*}
 Taking~$\varepsilon \rightarrow 0$ proves the claim.
\end{proof}

\begin{corollary}\label{cor:densityp}
  Let $p$ be a prime number, let $|\cdot|_p$ denote the $p$-adic
  absolute value on~$\bbZ$ and $\bbQ$, and let $M$ be an oriented
  compact connected manifold.  Then
  \begin{align*}
    \sv{M,\partial M}_{\bbZ_p} & = \sv{M,\partial M}_{\bbZ, |\cdot|_p}
    \\
    \sv{M,\partial M}_{\bbQ_p} & = \sv{M,\partial M}_{\bbQ, |\cdot|_p}.
  \end{align*}
\end{corollary}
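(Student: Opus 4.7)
The plan is to apply the density proposition (Proposition~\ref{prop:density}) directly to the canonical inclusions
\[
\iota_{\bbZ}\colon \bbZ \longrightarrow \bbZ_p \qand \iota_{\bbQ}\colon \bbQ \longrightarrow \bbQ_p.
\]
Both maps are unital ring homomorphisms, and by construction of $\bbZ_p$ (respectively $\bbQ_p$) as the completion with respect to $|\cdot|_p$, the inclusion preserves the $p$-adic absolute value and has dense image. Hence both hypotheses of Proposition~\ref{prop:density} hold with $\lambda = 1$, and the two claimed equalities follow immediately.

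Concretely, I would first note that the definition of $\bbZ_p$ as the inverse limit of the rings $\bbZ/p^m\bbZ$ (or, equivalently, as the $|\cdot|_p$-completion of $\bbZ$) guarantees that every $x \in \bbZ_p$ is a $|\cdot|_p$-limit of integers; this is the density statement. Next, the definition of $|\cdot|_p$ on $\bbZ_p$ extends the $p$-adic absolute value on $\bbZ$, so $|\iota_{\bbZ}(n)|_p = |n|_p$ for all $n \in \bbZ$. The same two observations apply verbatim to $\iota_{\bbQ}\colon \bbQ \longrightarrow \bbQ_p$, since $\bbQ_p$ is by definition the $|\cdot|_p$-completion of $\bbQ$.

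With both hypotheses verified, Proposition~\ref{prop:density} yields
\[
\sv{M,\partial M}_{\bbZ,|\cdot|_p} = \sv{M,\partial M}_{\bbZ_p}
\qand
\sv{M,\partial M}_{\bbQ,|\cdot|_p} = \sv{M,\partial M}_{\bbQ_p},
\]
which is exactly the corollary. No obstacle arises here; the corollary is a bookkeeping consequence of the density proposition, whose actual content (approximating fundamental cycles after correcting by a boundary) has already been absorbed into Proposition~\ref{prop:density}.
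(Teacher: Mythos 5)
Your proof is correct and takes exactly the same approach as the paper: apply Proposition~\ref{prop:density} to the canonical inclusions $\bbZ \hookrightarrow \bbZ_p$ and $\bbQ \hookrightarrow \bbQ_p$, noting density and isometry. (One cosmetic slip: Proposition~\ref{prop:density} has no $\lambda$ parameter — that is Proposition~\ref{prop:monotonicity}; the hypothesis you need is just the equality $|f(x)|_S = |x|_R$, which you do verify.)
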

\begin{proof}
  By definition $\bbZ$ is $|\cdot|_p$-dense in~$\bbZ_p$ and $\bbQ$ is
  $|\cdot|_p$-dense in~$\bbQ_p$. Therefore, we can apply
  Proposition~\ref{prop:density}.
\end{proof}
In fact, we have the following simultaneous approximation result:  
\begin{corollary}\label{cor:approximation-at-finitely-many-primes}
Let $M$ be an oriented compact connected manifold and let $T$ be a
finite set of prime numbers.
\begin{enumerate}
\item For every $\epsilon > 0$, there is a $c
\in Z(M,\partial M;\bbZ)$ such that for all $p \in T$
\[
		|c|_{1,p} \leq \sv{M, \partial M}_{\bbZ_p} + \epsilon.
\]
\item For every $\epsilon > 0$, there is a $c
\in Z(M,\partial M;\bbQ)$ such that for all $p \in T$
\[
		|c|_{1,p} \leq \sv{M, \partial M}_{\bbQ_p} + \epsilon \quad\text{and} \quad
		|c|_{1,\bbR} \leq \sv{M, \partial M}_{\bbR} + \epsilon.
\]
\end{enumerate}
\end{corollary}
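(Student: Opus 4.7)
The plan is to adapt the density argument of Proposition~\ref{prop:density} to finitely many primes at once, using the Chinese Remainder Theorem for part~(1) and the density of $\bbQ$ in $\bbR \times \prod_{p \in T}\bbQ_p$ (strong approximation) for part~(2).

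For part~(1), I first apply Corollary~\ref{cor:densityp} to choose, for each $p \in T$, an integral fundamental cycle $c_p \in Z(M,\partial M;\bbZ)$ with $|c_p|_{1,p} < \sv{M,\partial M}_{\bbZ_p} + \epsilon/2$. Fixing a reference integral fundamental cycle $c_\ast$, each difference $c_p - c_\ast$ is, up to a chain supported on $\partial M$ (handled exactly as in the proof of Proposition~\ref{prop:density}), an integer boundary $\partial x_p$ with $x_p \in C_{d+1}(M;\bbZ)$. The union of $(d+1)$-simplices appearing with nonzero coefficient in any $x_p$ is finite, so CRT applied coefficient by coefficient produces $x \in C_{d+1}(M;\bbZ)$ with $|x - x_p|_{1,p} < \delta$ for every $p \in T$ and any prescribed $\delta > 0$. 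Then $c := c_\ast + \partial x$ (with the standard $\partial M$-correction) is an integral fundamental cycle and the identity $c - c_p = \partial(x - x_p)$ forces $|c|_{1,p} \leq |c_p|_{1,p} + (d+2)\delta$, which is strictly less than $\sv{M,\partial M}_{\bbZ_p} + \epsilon$ for $\delta$ small enough.

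For part~(2), I run the same procedure over $\bbQ$, with one extra ingredient: a rational fundamental cycle $c_\infty$ with $|c_\infty|_{1,\bbR} < \sv{M,\partial M}_\bbR + \epsilon/2$ (which exists by Proposition~\ref{prop:density} applied to $\bbQ \hookrightarrow \bbR$), used as the reference $c_\ast := c_\infty$. Here strong approximation of $\bbQ$ inside $\bbR \times \prod_{p\in T}\bbQ_p$ replaces CRT: I choose a single rational chain $x$ whose coefficients simultaneously approximate those of $x_p$ in the $p$-adic absolute value for each $p \in T$ and are close to $0$ in the real absolute value. The same identity then yields $|c - c_p|_{1,p}$ small for $p \in T$ and $|c - c_\infty|_{1,\bbR}$ small, giving all required bounds simultaneously.

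The main obstacle is the simultaneous approximation step itself, and this is precisely what CRT (respectively strong approximation) resolves; everything else is bookkeeping, with norm estimates and the $\partial M$-correction handled exactly as in Proposition~\ref{prop:density}.
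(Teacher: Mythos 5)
Your argument is correct (up to the same $\partial M$-bookkeeping that the paper itself elides in Proposition~\ref{prop:density}), but it takes a genuinely different route from the paper's. The paper does not revisit the boundary-approximation mechanism at all: for each $p \in T$ it picks a near-optimal integral cycle $c_p$, then uses the density of $\bbZ$ in $\prod_{p\in T}\bbZ_p$ (i.e.\ CRT) to produce integers $a_p$ with $\sum_{p\in T} a_p = 1$, each $a_p$ being $p$-adically a unit close to~$1$ and $q$-adically close to~$0$ for $q \in T\setminus\{p\}$, and sets $c = \sum_{p\in T} a_p c_p$; the affine constraint $\sum a_p = 1$ makes $c$ automatically a relative fundamental cycle, so no $\partial M$-correction is needed, and the norm bound $|c|_{1,p} \leq \sum_q |a_q|_p\,|c_q|_{1,p}$ is immediate. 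Part~(2) is handled identically via density of $\bbQ$ in $\bbR \times \prod_{p\in T}\bbQ_p$. Your proof instead re-runs a simultaneous version of the Proposition~\ref{prop:density} argument: fix a reference cycle, write $c_p - c_\ast$ as (essentially) $\partial x_p$, and apply CRT (resp.\ strong approximation) to the coefficients of the $x_p$ rather than to affine weights. Both arguments use the same arithmetic input, and both work, but the paper's affine-combination trick is cleaner: it avoids the chain-level bookkeeping, the $(d+2)\delta$-type error terms, and the $\partial M$-correction entirely, whereas your version makes the kinship with Proposition~\ref{prop:density} explicit at the cost of carrying those corrections along. One small caution on your write-up: the identity $c - c_p = \partial(x - x_p)$ only holds modulo a chain supported on $\partial M$; to close the estimate you must record the $\partial M$-part of $c_p - c_\ast$ and approximate it simultaneously (which is routine but not free), a step you gesture at via ``the standard $\partial M$-correction'' without spelling out.
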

\begin{proof}
For every prime $p\in T$ we pick (using Corollary \ref{cor:densityp})
a relative fundamental cycle $c_p \in Z(M,\partial M;\bbZ)$ that 
almost realizes the $p$-adic simplicial volume.  The density of $\bbZ$
in $\prod_{p \in T} \bbZ_p$~\cite[(3.4)]{Neukirch} allows us to
find integers $a_p \in \bbZ$ (for $p \in T$) with $\sum_{p\in T} a_p =
1$ and such that $a_p$ is close to $1$ in the $p$-adic absolute value,
but close to $0$ in the $q$-adic absolute value for all~$q \in T \setminus \{p\}$.
Then
\[
	c = \sum_{p\in T} a_pc_p
\]
is a relative fundamental cycle and approximates the $p$-adic
simplicial volumes for all $p \in T$.

Assertion (2) follows from the same argument using the density of
$\bbQ$ in the ring~$\bbR \times \prod_{p\in T} \bbQ_p$.
\end{proof}
\begin{proposition}\label{prop:ZpQp}
  Let $M$ be an oriented compact connected manifold. If $p$ is a prime
  number such that
  $\sv{M,\partial M}_{\bbQ_p} <  p$ 
  then
  \[ \sv{M,\partial M}_{\bbQ_p} = \sv{M,\partial M}_{\bbZ_p}.\]
  If $M$ is closed, $\dim M$ is even, and  $\sv{M}_{\bbQ_p} <  2p$,
  then $\sv{M}_{\bbQ_p} = \sv{M}_{\bbZ_p}$.

  In particular: For almost all primes~$p$, we have
  \[ \sv{M,\partial M}_{\bbQ_p} = \sv{M,\partial M}_{\bbZ_p}. \]  
\end{proposition}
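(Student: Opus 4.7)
The monotonicity $\sv{M,\partial M}_{\bbQ_p}\le\sv{M,\partial M}_{\bbZ_p}$ is already in Corollary~\ref{cor:sandwichp}, so each of the three claims reduces to the reverse inequality. The whole plan rests on the elementary observation that every nonzero $a\in\bbQ_p$ has $|a|_p\in\{p^n:n\in\bbZ\}$, so any coefficient with $|a|_p<p$ automatically lies in $\bbZ_p$.

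For the first claim, I would pick $\varepsilon>0$ with $\sv{M,\partial M}_{\bbQ_p}+\varepsilon<p$ and a relative fundamental cycle $c=\sum_j a_j\sigma_j\in Z(M,\partial M;\bbQ_p)$ of $|\cdot|_p$-norm $<p$. Since each nonzero $|a_j|_p$ is a power of $p$ bounded by the total, $|a_j|_p\le 1$; hence $a_j\in\bbZ_p$ and $c\in C_d(M;\bbZ_p)$. The induced map $H_d(M,\partial M;\bbZ_p)\cong\bbZ_p\to H_d(M,\partial M;\bbQ_p)\cong\bbQ_p$ is the natural inclusion and sends $[M]_{\bbZ_p}$ to $[M]_{\bbQ_p}$, so $[c]_{\bbZ_p}=[M]_{\bbZ_p}$; then $\sv{M,\partial M}_{\bbZ_p}\le|c|_{1,p}<\sv{M,\partial M}_{\bbQ_p}+\varepsilon$ finishes this case after $\varepsilon\to 0$. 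The third statement is then immediate from Corollary~\ref{cor:universalZ}, which provides $\sv{M,\partial M}_{\bbQ_p}\le\sv{M,\partial M}_{\bbZ}$ uniformly in $p$, so the first claim applies to every prime $p>\sv{M,\partial M}_{\bbZ}$.

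For the even-dimensional closed case the budget $<2p$ is less restrictive, and the same $p$-adic valuation analysis of a cycle $c=\sum_j a_j\sigma_j\in Z(M;\bbQ_p)$ with $|c|_{1,p}<2p$ yields $|a_j|_p\le p$ for every $j$ and at most one index with $|a_j|_p=p$ (two such would already make $|c|_{1,p}\ge 2p$). If no such index exists I conclude as above, so I may assume $|a_0|_p=p$ and $|a_j|_p\le 1$ for $j\ne 0$. Here the assumption that $d=\dim M$ is even enters: $\sum_{i=0}^d(-1)^i=1$, so grouping the terms of $\partial\sigma_0=\sum_{i=0}^d(-1)^i\sigma_0\circ\delta_i$ by equal singular simplices produces $\partial\sigma_0=\sum_\rho n_\rho\cdot\rho$ with $n_\rho\in\bbZ$ and $\sum_\rho n_\rho=1$, and the ultrametric inequality forces some face $\rho^*$ with $|n_{\rho^*}|_p=1$. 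Reading off the coefficient of $\rho^*$ in the equation $\partial c=0$ (valid because $M$ is closed), the $\sigma_0$-contribution $a_0 n_{\rho^*}$ has $p$-adic norm $p$ while every other contribution has norm $\le 1$, so by the strong triangle inequality the total has norm $p\ne 0$, contradicting $\partial c=0$. The genuine obstacle is precisely this last step: in odd dimension $\sum_i(-1)^i=0$, and constant $d$-simplices even satisfy $\partial\sigma=0$, so the ultrametric cancellation argument collapses, explaining why the factor $2p$ cannot be reached without the evenness assumption.
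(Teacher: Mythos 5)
Your proof is correct and follows essentially the same route as the paper: the first claim uses that a $p$-adic absolute value strictly below $p$ forces $\bbZ_p$-integrality, the even-dimensional closed case uses that $\sum_{i=0}^{d}(-1)^i=1$ when $d$ is even to derive a contradiction from a single bad coefficient of norm $p$, and the almost-all-primes statement invokes the universal integral bound $\sv{M,\partial M}_{\bbQ_p}\le\sv{M,\partial M}_{\bbZ}$. Your version of the even-dimensional argument is phrased directly via the strong triangle inequality for $\partial c=0$, whereas the paper multiplies $c$ by $p$ and works modulo $p$ to see that $\sigma_1$ would be a mod-$p$ cycle, but these are the same observation in different clothing.
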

\begin{proof}
  By Corollary~\ref{cor:sandwichp}, we only need to take care of the
  estimate
  \[ \sv{M,\partial M}_{\bbQ_p} \geq \sv{M,\partial M}_{\bbZ_p}. \]
  If $\Vert M,\partial M \Vert_{\bbQ_p} < p$, then every relative
  fundamental cycle~$\sum_{j=1}^k a_j \cdot \sigma_j \in
  C_*(M;\bbQ_p)$ with norm less than~$p$ satisfies $|a_j|_p < p$ for
  all~$j \in \{1,\dots, k\}$ and so $a_j \in \bbZ_p$.
  
  Suppose that $M$ is closed, $\dim M$ is even and that
  $\sv{M}_{\bbQ_p} < 2p$.  We claim that every fundamental cycle $c =
  \sum_{j=1}^k a_j \cdot \sigma_j \in C_*(M;\bbQ_p)$ with $|c|_{1,p} <
  2p$ lies in $C_*(M;\bbZ_p)$.  Indeed, suppose that, say, $a_1
  \not\in \bbZ_p$, then $|a_1|_p = p$ and $|a_j|_p \leq 1$ for all $j
  > 1$.  We multiply $c$ with $p$ to observe that the simplex
  $\sigma_1$ is a cycle modulo $p$; this is impossible, since an
  even-dimensional simplex has an odd number of faces, which are
  summed up with alternating signs.

  For all primes~$p$, we have~$\sv{M,\partial M}_{\bbQ_p} \leq
  \sv{M,\partial M}_{\bbZ}$
  (Corollary~\ref{cor:sandwichp}). Therefore, each prime~$p >
  \sv{M,\partial M}_{\bbZ}$ satisfies the hypothesis of the first
  part.
\end{proof}

We will continue the investigation of the relation between the
$\bbZ_p$-version and the $\bbQ_p$-version with slightly different
methods in Section~\ref{subsec:aaprimes}.

\subsection{Scaling the fundamental class}

The definition of simplicial volume~$\sv{\args}_R$ with coefficients
in a seminormed ring~$R$ clearly can be extended to all homology classes
in singular homology~$H_*(\args;R)$ with $R$-coefficients. 

\begin{proposition}\label{prop:scalingp}
  Let $p$ be a prime number and let $M$ be an oriented compact
  connected manifold. Then the sequence $(p^m \cdot \sv{p^m \cdot
    [M,\partial M]_{\bbZ_p}}_{\bbZ_p})_{m \in \bbN}$ is monotonically
  decreasing and
  \[ \sv{M,\partial M}_{\bbQ_p}
  = \lim_{m \rightarrow \infty} \;
  p^m \cdot \sv{p^m \cdot [M,\partial M]_{\bbZ_p}}_{\bbZ_p}.
  \]
\end{proposition}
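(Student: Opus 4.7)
The plan is to prove monotonicity directly from scaling coefficients, and then use the characterization of the limit as the infimum of the sequence to squeeze it between $\sv{M,\partial M}_{\bbQ_p}$ from both sides.

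For monotonicity, take any representative $c \in C_d(M;\bbZ_p)$ of $p^m \cdot [M,\partial M]_{\bbZ_p}$; then $p \cdot c$ is a $\bbZ_p$-chain representing $p^{m+1} \cdot [M,\partial M]_{\bbZ_p}$, and since $|p \cdot a|_p = p^{-1}|a|_p$ for every $a \in \bbZ_p$, we obtain $|p \cdot c|_{1,p} = p^{-1} |c|_{1,p}$. Passing to the infimum yields $\sv{p^{m+1} \cdot [M,\partial M]_{\bbZ_p}}_{\bbZ_p} \leq p^{-1} \sv{p^m \cdot [M,\partial M]_{\bbZ_p}}_{\bbZ_p}$, and multiplying by $p^{m+1}$ gives the desired monotonicity. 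Consequently the sequence converges to its infimum $L \in \bbR_{\geq 0}$.

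For the inequality $\sv{M,\partial M}_{\bbQ_p} \leq L$, take a $\bbZ_p$-cycle $c$ representing $p^m \cdot [M,\partial M]_{\bbZ_p}$. Then $p^{-m} \cdot c \in C_d(M;\bbQ_p)$ is a $\bbQ_p$-relative fundamental cycle (by naturality of the change-of-coefficients map $\bbZ_p \hookrightarrow \bbQ_p$ on the top class), and its norm is $p^m \cdot |c|_{1,p}$. Taking infima gives $\sv{M,\partial M}_{\bbQ_p} \leq p^m \cdot \sv{p^m \cdot [M,\partial M]_{\bbZ_p}}_{\bbZ_p}$ for every $m$, and hence $\sv{M,\partial M}_{\bbQ_p} \leq L$.

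For the converse $L \leq \sv{M,\partial M}_{\bbQ_p}$, fix $\varepsilon > 0$ and choose a $\bbQ_p$-relative fundamental cycle $c = \sum_{j=1}^k a_j \sigma_j$ with $|c|_{1,p} \leq \sv{M,\partial M}_{\bbQ_p} + \varepsilon$. Since there are only finitely many coefficients, we can pick $m \in \bbN$ large enough that $p^m a_j \in \bbZ_p$ for all $j$; then $p^m c \in C_d(M;\bbZ_p)$ is a $\bbZ_p$-cycle with $|p^m c|_{1,p} = p^{-m} |c|_{1,p}$. The key observation here is that $p^m c$ actually represents $p^m \cdot [M,\partial M]_{\bbZ_p}$ in $H_d(M,\partial M;\bbZ_p)$: the natural map $H_d(M,\partial M;\bbZ_p) \to H_d(M,\partial M;\bbQ_p)$ is injective (both groups are free of rank one over $\bbZ_p$ respectively $\bbQ_p$, and the map is the canonical inclusion on generators), and under this map both $[p^m c]$ and $p^m [M,\partial M]_{\bbZ_p}$ map to $p^m [M,\partial M]_{\bbQ_p}$. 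Therefore $\sv{p^m \cdot [M,\partial M]_{\bbZ_p}}_{\bbZ_p} \leq p^{-m} |c|_{1,p}$, so $p^m \cdot \sv{p^m \cdot [M,\partial M]_{\bbZ_p}}_{\bbZ_p} \leq \sv{M,\partial M}_{\bbQ_p} + \varepsilon$, and hence $L \leq \sv{M,\partial M}_{\bbQ_p} + \varepsilon$. Letting $\varepsilon \to 0$ finishes the proof.

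The only genuinely non-formal step is the identification that $p^m c$ represents $p^m \cdot [M,\partial M]_{\bbZ_p}$ in integral $p$-adic homology; everything else is bookkeeping with the $p$-adic absolute value. This step rests on the fact that the top relative homology of an oriented compact connected manifold with $\bbZ_p$-coefficients is torsion-free (isomorphic to $\bbZ_p$), which ensures injectivity of the comparison map into $\bbQ_p$-homology.
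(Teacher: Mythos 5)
Your proof is correct and follows essentially the same route as the paper: monotonicity via multiplying a representative by $p$, the inequality $\sv{M,\partial M}_{\bbQ_p} \leq p^m \sv{p^m [M,\partial M]_{\bbZ_p}}_{\bbZ_p}$ by dividing a $\bbZ_p$-representative by $p^m$, and the reverse inequality by clearing denominators of a near-optimal $\bbQ_p$-cycle. The one place you add value is the explicit justification, via injectivity of $H_d(M,\partial M;\bbZ_p) \to H_d(M,\partial M;\bbQ_p)$, that $p^m c$ represents $p^m[M,\partial M]_{\bbZ_p}$; the paper states this without comment, and your argument correctly fills that small gap.
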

\begin{proof}
  If $m \in \bbN_{>0}$ and $c \in C_{\dim M}(M;\bbZ_p)$ is a relative
  cycle that represents~$p^m \cdot [M,\partial M]_{\bbZ_p}$, then $p
  \cdot c$ represents $p^{m+1} \cdot [M,\partial M]_{\bbZ_p}$ and
  hence
  \[
  	p^{m+1} \cdot \sv{p^{m+1}\cdot [M,\partial M]}_{\bbZ_p} \leq p^{m+1} \cdot |p\cdot c|_1 = p^{m} \cdot |c|_1.
  \]
  In addition, the chain 
  $p^{-m} \cdot c$ is a relative $\bbQ_p$-fundamental cycle
  of~$(M,\partial M)$ and so
  $\sv{M,\partial M}_{\bbQ_p}
     \leq p^m \cdot |c|_{1,p}.
  $
  Taking the infimum over all such~$c$ implies monotonicity of the
  sequence and shows that
  \[ \sv{M,\partial M}_{\bbQ_p} \leq p^m \cdot \sv{ p^m \cdot [M,\partial M]_{\bbZ_p}}_{\bbZ_p}.
  \]
  Conversely, let $\varepsilon \in \bbR_{>0}$ and let $c \in
  Z(M,\partial M;\bbQ_p)$ with~$|c|_{1,p} \leq \sv{M,\partial
    M}_{\bbQ_p} + \varepsilon$. The relative cycle~$c$ has only
  finitely many coefficients; hence, there exists an~$r \in \bbN$ such
  that all coefficients of~$c$ lie in~$p^{-r} \cdot \bbZ_p$.  Thus,
  for all~$m \in \bbN_{\geq r}$, the relative cycle~$p^m \cdot c$
  represents~$p^m \cdot [M,\partial M]_{\bbZ_p}$, which yields
  \[ \sv{p^m \cdot [M,\partial M]_{\bbZ_p}}_{\bbZ_p}
  \leq p^{-m} \cdot |c|_{1,p}
  \leq p^{-m} \cdot \sv{M,\partial M}_{\bbQ_p} + p^{-m} \cdot \varepsilon
  \]
  for all~$m \in \bbN_{\geq r}$. 
  Taking~$\varepsilon \rightarrow 0$ then proves the claim.
\end{proof}

\subsection{The degree estimate}

\begin{proposition}\label{prop:degestimate}
  Let $(R,|\cdot|_R)$ be a seminormed ring and let $f \colon
  M\longrightarrow N$ be a continuous map between oriented compact
  connected manifolds of the same dimension. If $\deg f \in R^\times
  \cup \{0\}$, then
  \[ \lvert\deg f\rvert_R \cdot \sv{N, \partial N}_R
     \leq \sv{M,\partial M}_R.
  \]
\end{proposition}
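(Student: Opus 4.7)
The plan is the classical degree-estimate strategy, adapted to a seminormed ring: push a near-optimal relative $R$-fundamental cycle of $(M,\partial M)$ forward along $f$, rescale by $(\deg f)^{-1}$ to turn it into a relative $R$-fundamental cycle of $(N,\partial N)$, and then chase seminorms. If $\deg f = 0$, the asserted inequality reduces to $0 \leq \sv{M,\partial M}_R$ and there is nothing to prove, so I assume $\deg f \in R^\times$ from now on.

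Fix $\varepsilon > 0$ and choose a relative fundamental cycle $c = \sum_{j=1}^k a_j \sigma_j \in Z(M,\partial M;R)$ with $|c|_{1,R} \leq \sv{M,\partial M}_R + \varepsilon$. The chain map $C_d(f;R)$ sends $c$ to $\sum_{j=1}^k a_j (f \circ \sigma_j) \in C_d(N;R)$, and by the standard naturality of the fundamental class (combined with the definition of the degree, transported to $R$ along the unital ring homomorphism $\bbZ \to R$), this pushforward represents $\deg(f) \cdot [N,\partial N]_R$ in $H_d(N,\partial N;R)$. Since $\deg f$ is a unit, the chain $c' := (\deg f)^{-1} \cdot C_d(f;R)(c)$ is therefore a relative $R$-fundamental cycle of $(N,\partial N)$; submultiplicativity of $|\cdot|_R$ and the triangle inequality in $C_d(N;R)$ bound its seminorm by $|c'|_{1,R} \leq |(\deg f)^{-1}|_R \cdot |c|_{1,R}$.

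Multiplying this estimate by $|\deg f|_R$ and using the identity $|\deg f|_R \cdot |(\deg f)^{-1}|_R = 1$ yields
\[
   |\deg f|_R \cdot \sv{N,\partial N}_R \;\leq\; |\deg f|_R \cdot |c'|_{1,R} \;\leq\; |c|_{1,R} \;\leq\; \sv{M,\partial M}_R + \varepsilon,
\]
and sending $\varepsilon \to 0$ finishes the proof. The only delicate step is this final identity: pure submultiplicativity of a seminorm on a ring gives merely $|u|_R \cdot |u^{-1}|_R \geq 1$, so the argument relies implicitly on multiplicativity of $|\cdot|_R$ on units. This is automatic for absolute values and for the trivial seminorm, hence holds in every case of interest in this paper (the $p$-adic absolute values on $\bbZ_p$, $\bbQ_p$, and $\bbZ/p^m\bbZ$, the trivial seminorms, and the usual absolute values on $\bbZ$, $\bbQ$, $\bbR$), so no further care is needed for the applications.
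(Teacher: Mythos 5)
Your argument is essentially the one in the paper, and your caveat about the identity $|u|_R\,|u^{-1}|_R = 1$ is well placed --- in fact, the paper's own proof tacitly makes the same assumption. The paper's chain of inequalities silently replaces $\bigl|(\deg f)^{-1}\bigr|_R$ by $1/\lvert\deg f\rvert_R$; submultiplicativity only gives $1 = |1| \leq |u|_R\,|u^{-1}|_R$, which goes the wrong way, so these two quantities need not agree. This is not purely cosmetic: the proposition, read literally, fails for a general seminormed ring. For instance, $|q| := \max(1,|q|_\infty)$ for $q \in \bbQ\setminus\{0\}$ and $|0|:=0$ defines a seminorm on $\bbQ$ in the paper's sense with $|2|\cdot|1/2| = 2$; taking $f\colon S^1 \to S^1$ of degree $2$ would give the false conclusion $2 = |2|\cdot\sv{S^1}_R \leq \sv{S^1}_R = 1$. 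So the statement really does need the additional hypothesis you isolate --- that $|\cdot|_R$ is multiplicative on $\deg f$, i.e.\ $|(\deg f)^{-1}|_R = |\deg f|_R^{-1}$ --- and, as you observe, this holds for absolute values and for trivial seminorms, hence for every seminormed ring used in the paper, so none of the applications are affected. It would be cleaner to state this condition explicitly in the proposition (or to restrict to normed rings), rather than to let it ride implicitly.
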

\begin{proof}
  If $\deg f = 0$, the assertion is obvious. Therefore, we may assume
  that~$\deg f \in R^\times$. If $c \in Z(M, \partial M;R)$, then, by
  definition of the mapping degree, $1/\deg f \cdot C_*(f;R)(c) \in
  Z(N, \partial N;R)$.  Therefore,
  \[ \lvert \deg f\rvert_R \cdot \sv{N,\partial N}_R
  \leq \lvert \deg f \rvert_R \cdot \frac1{\lvert \deg f \rvert_R} \cdot \bigl|C_*(f;R)(c)\bigr|_{1,R}
  \leq |c|_{1,R}.
  \]
  We can now take the infimum over all~$c$.
\end{proof}

\begin{corollary}\label{cor:degestimatep}
  Let $p$ be a prime number and let $M \longrightarrow N$ be
  a continuous map between oriented compact connected manifolds of the
  same dimension.
  \begin{enumerate}
  \item Then $\lvert\deg f \rvert_p \cdot \sv{N,\partial N}_{\bbQ_p}
    \leq \sv{M,\partial M}_{\bbQ_p}$.
  \item If $p$ is coprime to~$\deg f$, then 
    $\sv{N,\partial N}_{\bbZ_p} \leq \sv{M,\partial M}_{\bbZ_p}$.
  \end{enumerate}
\end{corollary}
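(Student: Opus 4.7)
The plan is to derive both assertions as direct consequences of Proposition~\ref{prop:degestimate} applied to the appropriate seminormed ring, after verifying the unit/absolute-value conditions in each case.

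For assertion~(1), I would take $R = \bbQ_p$ with its $p$-adic absolute value. Since $\bbQ_p$ is a field, every nonzero element is a unit, so the image of~$\deg f \in \bbZ$ in~$\bbQ_p$ automatically lies in $\bbQ_p^\times \cup \{0\}$. Proposition~\ref{prop:degestimate} then yields
\[
  \lvert\deg f\rvert_p \cdot \sv{N,\partial N}_{\bbQ_p} \leq \sv{M,\partial M}_{\bbQ_p},
\]
which is precisely the claim.

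For assertion~(2), I would take $R = \bbZ_p$ with its $p$-adic absolute value. The key observation is that an integer~$n \in \bbZ$ is a unit in~$\bbZ_p$ if and only if $\gcd(n,p) = 1$, and in that case $|n|_p = 1$. Hence, under the hypothesis that $p$ is coprime to $\deg f$, the mapping degree sits in $\bbZ_p^\times$ with $|\deg f|_p = 1$, and Proposition~\ref{prop:degestimate} gives the stated inequality $\sv{N,\partial N}_{\bbZ_p} \leq \sv{M,\partial M}_{\bbZ_p}$.

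Both parts are essentially bookkeeping once Proposition~\ref{prop:degestimate} is in hand; the only substantive point, which I would note but not belabor, is the characterization of units in~$\bbZ_p$ to explain why the coprimality hypothesis is the right one for sharpening the $\bbQ_p$-estimate (where a factor $|\deg f|_p$ would otherwise appear) to the cleaner $\bbZ_p$-statement with no loss.
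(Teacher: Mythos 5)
Your proof is correct and follows exactly the paper's approach: the paper simply remarks that the corollary is ``an immediate consequence of Proposition~\ref{prop:degestimate},'' and you have supplied the (correct) bookkeeping that makes this immediate — units in $\bbQ_p$ are all nonzero elements, and units in $\bbZ_p$ are the integers coprime to~$p$, which have $p$-adic absolute value~$1$.
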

\begin{proof}
  This is an immediate consequence of Proposition~\ref{prop:degestimate}.
\end{proof}

\begin{proposition}
  Let $R$ be a seminormed ring and let
  $f \colon M \longrightarrow N$ be a finite $\ell$-sheeted covering
  map of oriented compact connected manifolds of the same dimension.
  Then 
  \[ \sv{M,\partial M}_R \leq \ell \cdot \sv{N,\partial N}_R.
  \]
\end{proposition}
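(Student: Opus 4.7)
The plan is to adapt the classical transfer construction for covering maps to arbitrary seminormed ring coefficients. Given a relative $R$-fundamental cycle~$c$ of~$(N,\partial N)$ whose $\ell^1$-norm almost realises $\sv{N,\partial N}_R$, I would lift each of its simplices to all of its sheets in~$M$; the resulting chain will be a relative $R$-fundamental cycle of~$(M,\partial M)$ whose $\ell^1$-norm has grown by at most a factor of~$\ell$.

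First, I would set up the transfer. Since $f$ is a finite covering, it restricts to a finite $\ell$-sheeted covering $\partial M \to \partial N$, and each standard simplex~$\Delta^d$ is simply connected, so every singular simplex $\sigma \colon \Delta^d \to N$ admits exactly~$\ell$ distinct lifts $\tilde\sigma^{(1)},\dots,\tilde\sigma^{(\ell)} \colon \Delta^d \to M$ with $f\circ\tilde\sigma^{(i)}=\sigma$. Setting
\[
  \mathrm{tr}\Bigl(\sum_j a_j \sigma_j\Bigr) := \sum_j a_j \sum_{i=1}^\ell \tilde\sigma_j^{(i)}
\]
and extending $R$-linearly defines a chain map $\mathrm{tr}\colon C_*(N;R) \to C_*(M;R)$ that sends $C_*(\partial N;R)$ into $C_*(\partial M;R)$ (because the $\ell$ lifts of a face of $\sigma$ are precisely the faces of the $\ell$ lifts of $\sigma$). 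Two features will be crucial: $\mathrm{tr}$ is natural with respect to unital ring homomorphisms of coefficients, and $C_*(f;R) \circ \mathrm{tr} = \ell\cdot \mathrm{id}_{C_*(N;R)}$ by construction.

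Next, I would check that $\mathrm{tr}$ takes a relative $R$-fundamental cycle of~$(N,\partial N)$ to one of~$(M,\partial M)$. By naturality of $\mathrm{tr}$ and the fact that the $R$-fundamental class is the image of the integral one under the canonical map~$\bbZ \to R$, it suffices to treat the case $R=\bbZ$. There, $H_d(M,\partial M;\bbZ)$ is free of rank one, so $\mathrm{tr}_*[N,\partial N]_\bbZ = b \cdot [M,\partial M]_\bbZ$ for some $b \in \bbZ$; combining $H_*(f;\bbZ)[M,\partial M]_\bbZ = \ell \cdot [N,\partial N]_\bbZ$ with $H_*(f;\bbZ)\circ \mathrm{tr}_* = \ell \cdot \mathrm{id}$ forces $b\ell = \ell$ in~$\bbZ$, hence $b=1$.

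Finally, the norm estimate is immediate: for any relative $R$-fundamental cycle $c=\sum_{j=1}^k a_j\sigma_j$ of~$(N,\partial N)$,
\[
  |\mathrm{tr}(c)|_{1,R} \leq \sum_{j=1}^k |a_j|_R\cdot \ell = \ell \cdot |c|_{1,R},
\]
and taking the infimum over all such~$c$ yields the claim. The main delicate step is the identification $\mathrm{tr}_*[N,\partial N]_R = [M,\partial M]_R$: this is \emph{not} automatic from $H_*(f;R)\circ \mathrm{tr}_* = \ell \cdot \mathrm{id}$ when $\ell$ is a zero-divisor in~$R$ (for example $R=\bbF_\ell$), which is why the reduction to integer coefficients via naturality of the transfer is essential.
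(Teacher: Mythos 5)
Your proposal is correct and follows the same transfer-map approach as the paper, which simply asserts that the transfer $\tau(c)$ of a relative $R$-fundamental cycle is again one and concludes by the triangle inequality. Your extra care in verifying $\mathrm{tr}_*[N,\partial N]_R = [M,\partial M]_R$ via reduction to $\bbZ$-coefficients (rather than from $H_*(f;R)\circ\mathrm{tr}_*=\ell\cdot\mathrm{id}$, which indeed fails to pin things down when $\ell$ is a zero-divisor) is a correct and worthwhile elaboration of a step the paper treats as standard.
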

\begin{proof}
  Let $c \in Z(N,\partial N;R)$, say~$c = \sum_{j=1}^k a_j \cdot \sigma_j$.
  Then the transfer
  \[ \tau(c) := \sum_{j=1}^k a_j \cdot \tau(\sigma_j) \in C_{\dim M}(M;R)\]
  is a relative $R$-fundamental cycle of~$(M,\partial M)$.
  Since $\tau(\sigma_j)$ is a sum of $\ell$ distinct singular simplices,
  the triangle inequality implies the claim.
\end{proof}

\section{Poincar\'e duality and homological estimates}

We will now use Poincar\'e duality to establish Betti number
estimates, we will use the semi-simplicial sets associated with
fundamental cycles to study the dependence on the primes, and we
derive a simple product estimate.  Variations of these arguments have
been used
before~\cite[p.~301]{gromovmetric}\cite[Section~3.2]{sauervolgrowth}
in related situations.

\subsection{Poincar\'e duality}

Let $M$ be an oriented compact connected $d$-man\-i\-fold, let $R$ be a ring
with unit and let $c = \sum_{j=1}^k a_j\sigma_j \in Z(M,\partial
M;R)$.  By Poincar\'e-Lefschetz duality~\cite[3.43]{Hatcher}, for
each~$n \in \bbN$, the cap product map
\begin{align}
    H^{d - n}(M,\partial M;R) &\longrightarrow H_n(M;R)	\nonumber
    \\
      [f] & \longmapsto
      [f] \cap [c]
      =  \pm \biggl[\sum_{j=1}^k a_j \cdot f(\sigma_j|_{[n,\dots,d]}) \cdot \sigma_j|_{[0,\dots,n]}\biggr] \label{eq:duality-cap-formula}
\end{align}
is an $R$-isomorphism.  There is an analogous duality between~$H^{d-n}(M;R)$
and~$H_n(M,\partial M;R)$.

\subsection{The semi-simplicial set generated by a fundamental cycle}

Let $M$ be an oriented compact connected $d$-manifold and let $c =
\sum_{j=1}^k a_j \sigma_j \in C_d(M;R)$ be a relative fundamental
cycle in reduced form.  For each $n$ we define $X_n$ to be the set of
all $n$-dimensional faces of the simplices $\sigma_1,\dots, \sigma_k$.
The ordinary face maps endow $X = (X_n)_{n \in \bbN}$ with the
structure of a semi-simplicial set, which will be called the
\emph{semi-simplicial set generated by~$c$}.  The semi-simplicial
subset of simplices of~$X$ contained in the boundary $\partial M$ will be
denoted $\partial X$.  There is a canonical continuous map of pairs
	\[ \rho_c \colon (X^{\rm{top}},\partial X^{\rm{top}}) \to (M, \partial M)\]
from the geometric realization $X^{\rm{top}}$ of $X$ into $M$.  The
cycle $c$ defines a relative homology class in $H_d(X,\partial X; R)$, 
which will be denoted by $[X,\partial X]$.
\begin{lemma}\label{lem:comparison-simplicial}
Let $X$ be the semi-simplicial set generated by a relative fundamental
cycle $c$ of an oriented compact connected manifold~$M$.  The maps
\begin{align*}
	&H_n(\rho_c;R) \colon H_n(X;R) \to H_n(M;R), \\
	&H_n(\rho_c; R) \colon H_n(X,\partial X;R) \to H_n(M, \partial M;R)
\end{align*}
are surjective for all $n$. The maps
\begin{align*}
	&H^n(\rho_c;R) \colon H^n(M;R) \to H^n(X;R),\\
	&H^n(\rho_c;R) \colon H^n(M,\partial M;R) \to H^n(X, \partial X;R)
\end{align*}
are injective for all $n \in \bbN$.
\end{lemma}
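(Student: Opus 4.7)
The plan is to combine Poincar\'e--Lefschetz duality for~$(M,\partial M)$ with a projection formula for the cap product along~$\rho_c$. Two observations drive the argument. First, the cap product formula in~(\ref{eq:duality-cap-formula}) shows that for any cochain~$\phi$ on~$M$, the chain~$\phi \cap c$ only involves front faces of the simplices $\sigma_1,\dots,\sigma_k$, which by construction lie in~$X_n$. Second, because the elements of~$X_n$ are distinct as singular simplices of~$M$, the induced chain map $C_*(\rho_c;R)\colon C_*(X;R) \to C_*(M;R)$ is injective in every degree.

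With these observations, I would first verify that $c$ is a relative cycle when viewed as a chain in $C_d(X;R)$, so that the resulting class~$[X,\partial X] \in H_d(X,\partial X;R)$ is sent to~$[M,\partial M]$ by~$H_d(\rho_c;R)$. A direct comparison of the cap product formulas in $X$ and in $M$ then shows that, for every cochain~$\phi$,
\[ C_n(\rho_c;R)\bigl(\rho_c^*\phi \cap c\bigr) = \phi \cap c \quad\text{in } C_n(M;R), \]
where the right-hand chain is the one appearing in~(\ref{eq:duality-cap-formula}). Combined with injectivity of~$C_*(\rho_c;R)$, this forces $\rho_c^*\phi \cap c$ to be a cycle in~$X$ whenever $\phi \cap c$ is a cycle in~$M$, and it yields the projection formula
\[ H_n(\rho_c;R)\bigl(\rho_c^*[\phi] \cap [X,\partial X]\bigr) = [\phi] \cap [M,\partial M] \]
at the level of homology, in both variants of Poincar\'e--Lefschetz duality recalled after~(\ref{eq:duality-cap-formula}).

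Once this projection formula is available, surjectivity of~$H_n(\rho_c;R)$ is immediate: every class in $H_n(M;R)$, respectively $H_n(M,\partial M;R)$, has the form $[\phi]\cap[M,\partial M]$ for a suitable cohomology class $[\phi]$ by Poincar\'e--Lefschetz duality, and the projection formula exhibits it as the image under~$H_n(\rho_c;R)$ of~$\rho_c^*[\phi] \cap [X,\partial X]$. Injectivity of~$H^n(\rho_c;R)$ is proved dually: if $\rho_c^*[\phi] = 0$, the projection formula gives~$[\phi] \cap [M,\partial M] = 0$, and the cap product isomorphism from duality forces $[\phi] = 0$. The main bookkeeping obstacle is to set up the identification of~$C_*(X;R)$ with its image in~$C_*(M;R)$ carefully enough that the two cap products can be compared term by term; once that is in place, the rest is a formal consequence of Poincar\'e--Lefschetz duality and the projection formula.
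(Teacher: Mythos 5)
Your proof is correct and follows essentially the same route as the paper: both rely on the explicit cap-product formula~\eqref{eq:duality-cap-formula} together with Poincar\'e--Lefschetz duality, and your projection formula is precisely the commutativity of the diagram the paper draws. You are slightly more explicit at the chain level (noting injectivity of~$C_*(\rho_c;R)$ to ensure the chain $\rho_c^*\phi\cap c$ is a cycle in $X$), which fills in a small step the paper leaves implicit.
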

\begin{proof}  
  We write $c = \sum_{j=1}^k a_j \sigma_j \in C_d(M;R)$.   In view of
  Poincar\'e duality (as in \eqref{eq:duality-cap-formula}), every class in
$H_n(M;R)$ (respectively in $H_n(M,\partial M;R)$) can be
represented by a cycle supported on the faces
$\sigma_1|_{[0,\dots,n]}, \dots, \sigma_k|_{[0,\dots,n]}$. This
  proves surjectivity, since all these cycles lie in the image of
  $C_n(\rho_c;R)$.

The injectivity of $H^n(\rho_c;R) \colon H^n(M;R) \to H^n(X;R)$
follows from the surjectivity statement using the commutative diagram
\begin{equation*}
\begin{tikzcd}
	H^n(M;R) \arrow{rr}{[M,\partial M]\cap } \arrow[d, "H^n(\rho_c;R)"]& &  H_{d-n}(M, \partial M; R) \\
	H^n(X;R) \arrow{rr}{ [X,\partial X]\cap }& & H_{d-n}(X,\partial X;R) \arrow[twoheadrightarrow]{u}{H_{d-n}(\rho_c;R)}
\end{tikzcd}
\end{equation*}
where the cap product with $[M,\partial M]$ is an isomorphism by
duality.  The last assertion follows similarly, interchanging relative
and absolute homology.
\end{proof}

\subsection{Dependence on the prime}\label{subsec:aaprimes}

\begin{proof}[Proof of Theorem \ref{thm:equality-for-aa-primes}]
Let $M$ be an oriented compact connected $d$-manifold. We want to show
that
\[ \sv{M,\partial M}_{(\bbF_p)} = \sv{M,\partial M}_{\bbZ_p}  = \sv{M,\partial M}_{\bbQ_p} = \sv{M,\partial M}_{(\bbQ)}\]
holds for almost all prime numbers~$p$.  The argument given here is
based on the idea of the corresponding result for weightless
simplicial volumes~\cite[Theorem 1.2]{loehfp}, reformulated in the
language of semi-simplicial sets.  We recall that the equality
$\sv{M,\partial M}_{\bbZ_p} = \sv{M,\partial M}_{\bbQ_p}$ holds for
almost all primes by Proposition \ref{prop:ZpQp}.

By Corollary \ref{cor:universalZ}, the inequality $\sv{M,\partial
  M}_{(\bbF_p)} \leq \sv{M,\partial M}_{\bbZ} $ holds for all primes.
In particular, the weightless simplicial $\bbF_p$-volume is always
attained on a relative cycle with at most $k := \sv{M,\partial
  M}_{\bbZ}$ simplicies.

Say $d := \dim_M$. There are only finitely many distinct isomorphism classes
of pairs $(X,\partial X)$ consisting of a $d$-dimensional
semi-simplicial set $X$ generated by at most~$k$ simplices of dimension~$d$ and a
semi-simplicial subset $\partial X$; we write $S^d_k$ for a set of
representatives of these isomorphism classes.  Let $(X,\partial X) \in
S^d_k$. The boundary map $\partial_d \colon C_d(X, \partial X;\bbZ)
\to C_{d-1}(X,\partial X; \bbZ)$ is a linear map between free
$\bbZ$-modules of finite rank and, as such, has a finite number of
elementary divisors. In particular, there is a cofinite set $W$ of
primes that do not divide any elementary divisor of a boundary map
$\partial_d$ of an~$(X, \partial X) \in S^d_k$.

Let $p \in W$ and let $\bbZ_{(p)}$ denote the localization of $\bbZ$
at the prime ideal~$(p) \subseteq \bbZ$. Let $c$ be a relative
fundamental cycle in $C_d(M;\bbF_p)$ that realizes the mod~$p$
simplicial volume. We will show that $c$ lifts to a relative cycle
$\tilde{c} \in C_d(M;\bbZ_{(p)})$ supported on the same set of
simplices; by Proposition \ref{prop:monotonicity} this yields
\[
 	\sv{M,\partial M}_{(\bbQ)} \leq \sv{M,\partial M}_{(\bbZ_{(p)})} \leq \sv{M, \partial M}_{(\bbF_p)}.
\]
Using that $\sv{M,\partial M}_{(\bbF_p)} \leq \sv{M,\partial
  M}_{(\bbQ)}$ holds for almost all primes~\cite[Proof of
  Theorem~1.2]{loehfp}, the equality of the two weightless terms for
almost all primes follows.  Moreover, $\bbZ_{(p)}$ is a dense subring
of $\bbZ_p$ and therefore $\sv{M,\partial M}_{\bbZ_p} \leq \sv{M,
  \partial M}_{(\bbF_p)}$; by Corollary~\ref{cor:sandwichp} this implies
equality.

Thus it remains to show that $c$ admits a lift: 
Consider the semi-simplicial set $X$ generated by $c$. Since $p\in W$,
all elementary divisors of the boundary map~$\partial_d \colon C_d(X, \partial
X;\bbZ_{(p)}) \to C_{d-1}(X,\partial X; \bbZ_{(p)})$ are $1$. In other
words, after a suitable choice of bases the $\bbZ_{(p)}$-linear map
$\partial_d$ can be represented by a diagonal matrix with entries $0$
and $1$. Based on this description it is an elementary observation
that every element in the kernel of~$\bar{\partial}_d\colon C_d(X,
\partial X;\bbF_p) \to C_{d-1}(X,\partial X; \bbF_p)$ lifts to an
element in the kernel of~$\partial_d$~\cite{loehfp}.  To conclude
we note that a class in $H_d(M,\partial M;\bbZ_{(p)})$ is a
fundamental class if and only if it reduces to a fundamental class
in~$H_d(M,\partial M; \bbF_p)$.
\end{proof}

\subsection{Betti number estimates}\label{subsec:betti}

\begin{proposition}\label{prop:rel-Betti-bounds}
  Let $M$ be an oriented compact connected manifold, let
  $(R,|\cdot|_R)$ be a seminormed principal ideal domain with~$|x|_R
  \geq 1$ for all~$x \in R \setminus \{0\}$, and let $n \in \bbN$.
  Then
  \[ \rk_R H_n(M;R) \leq \sv {M,\partial M}_R.
  \]
\end{proposition}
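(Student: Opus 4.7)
The plan is to combine Poincar\'e--Lefschetz duality with the cap product formula~\eqref{eq:duality-cap-formula}, noting that cap product with a fundamental cycle lands in the free $R$-submodule spanned by its front $n$-faces, which has rank at most the number of top-dimensional simplices appearing in the cycle. The hypothesis $|x|_R \geq 1$ for all nonzero $x \in R$ is what converts a bound on the number of simplices into a bound on $\sv{M,\partial M}_R$.

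Concretely, I would start from a relative fundamental cycle $c = \sum_{j=1}^k a_j \sigma_j \in Z(M,\partial M;R)$ in \emph{reduced} form (all $a_j \neq 0$, the $\sigma_j$ pairwise distinct); combining terms if necessary does not increase $|c|_{1,R}$ by the triangle inequality. From $|a_j|_R \geq 1$ for every $j$ one immediately gets
\[
  k \leq \sum_{j=1}^k |a_j|_R = |c|_{1,R}.
\]
By Poincar\'e--Lefschetz duality~\eqref{eq:duality-cap-formula}, every class in~$H_n(M;R)$ is of the form $[f] \cap [c]$ for some cocycle~$f \in Z^{d-n}(M,\partial M;R)$. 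The formula~\eqref{eq:duality-cap-formula} then shows that the chain $f \cap c$ already lies in the $R$-submodule
\[
  F := \sum_{j=1}^k R\cdot \sigma_j|_{[0,\dots,n]} \subseteq C_n(M;R),
\]
which is a free $R$-module of rank at most~$k$. Consequently, the $R$-module $Z_n(M;R)\cap F$ surjects onto~$H_n(M;R)$.

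Since $R$ is a principal ideal domain, the submodule $Z_n(M;R)\cap F$ of the free module $F$ is itself free of rank at most~$k$. Any quotient of a free $R$-module of rank~$\leq k$ has $R$-rank at most~$k$ (tensor with $\mathrm{Frac}(R)$ to see this). Hence
\[
  \rk_R H_n(M;R) \leq k \leq |c|_{1,R},
\]
and taking the infimum over all reduced relative fundamental cycles~$c$ yields the claim. The step that really does the work is the cap product formula telling us exactly which $n$-chains can represent duals of cocycles; the subsequent rank bookkeeping is a routine consequence of $R$ being a PID, and the hypothesis $|\args|_R \geq 1$ on nonzero elements is essential to pass from ``number of simplices'' to ``norm of the cycle''.
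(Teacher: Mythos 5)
Your proof is correct and takes essentially the same approach as the paper: both use the explicit cap-product formula from Poincar\'e--Lefschetz duality to realize every class in $H_n(M;R)$ on the front $n$-faces of the simplices appearing in a fundamental cycle, then count simplices via the hypothesis $|x|_R\geq 1$. You merely unpack the paper's terse ``$H_n(M;R)$ is a subquotient of an $R$-module generated by $k$ elements'' into the explicit chain of $R$-modules $Z_n(M;R)\cap F \twoheadrightarrow H_n(M;R)$ and invoke the PID structure theorem, which is the same rank bookkeeping made explicit.
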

\begin{proof}
  We proceed as in the closed case~\cite[Lemma~4.1]{FLPS}: Let $d := \dim M$ and let
  $c \in Z(M,\partial M;R)$, say~$c = \sum_{j=1}^k a_j \cdot \sigma_j$.
  By Poincar\'e-Lefschetz duality, the cap product map
  given in \eqref{eq:duality-cap-formula} 
  is an $R$-isomorphism. 
  Hence, $H_n(M;R)$ is a subquotient of an $R$-module that is
  generated by $k$~elements.  Therefore, $\rk_R H_n(M;R) \leq
  k$. Moreover, the condition on~$|\cdot|_R$ implies that $k \leq
  |c|_{1,R}$. Taking the infimum over all~$c$ gives the desired
  estimate.
\end{proof}

\begin{corollary}\label{cor:bettiZp}
  If $p \in \bbN$ is prime and $M$ is an oriented compact connected
  manifold, then, for all~$n \in \bbN$, we have
  \[ b_n(M; \bbF_p)
  \leq \sv{M,\partial M}_{(\bbF_p)}
  \leq \sv{M,\partial M}_{\bbZ_p}.
  \]
\end{corollary}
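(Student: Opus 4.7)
The plan is to obtain this as a direct consequence of two previously established results, so the work amounts to checking that the hypotheses of Proposition~\ref{prop:rel-Betti-bounds} apply to the pair $(\bbF_p, |\cdot|_{\mathrm{triv}})$ and then invoking Corollary~\ref{cor:sandwichp} to pass from $\bbF_p$ to $\bbZ_p$.

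For the first inequality, I would apply Proposition~\ref{prop:rel-Betti-bounds} with $R = \bbF_p$ equipped with the trivial seminorm $|\cdot|_{\mathrm{triv}}$. The ring $\bbF_p$ is a field, hence a principal ideal domain, and by definition every nonzero element $x \in \bbF_p$ satisfies $|x|_{\mathrm{triv}} = 1 \geq 1$, so the hypothesis of the proposition is met. The corresponding simplicial volume is exactly $\sv{M,\partial M}_{(\bbF_p)}$ (by the definition of weightless simplicial volume in Example~\ref{ex:weightless}), and since $\bbF_p$ is a field, the rank of $H_n(M;\bbF_p)$ coincides with the Betti number $b_n(M;\bbF_p) = \dim_{\bbF_p} H_n(M;\bbF_p)$. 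Proposition~\ref{prop:rel-Betti-bounds} therefore yields
\[
b_n(M;\bbF_p) = \rk_{\bbF_p} H_n(M;\bbF_p) \leq \sv{M,\partial M}_{(\bbF_p)}.
\]

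For the second inequality, I would simply quote Corollary~\ref{cor:sandwichp}(1), which gives $\sv{M,\partial M}_{(\bbF_p)} \leq \sv{M,\partial M}_{\bbZ_p}$ as the leftmost link in its chain of inequalities. Chaining the two estimates produces the desired bound. No new obstacle arises here; the only mild point worth flagging explicitly is that Proposition~\ref{prop:rel-Betti-bounds} requires $|\cdot|_R \geq 1$ on nonzero elements, which is precisely the feature that makes the trivial seminorm well-suited for counting simplices and hence for bounding ranks of homology.
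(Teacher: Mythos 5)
Your proof is correct and follows exactly the paper's own argument: the first inequality is Proposition~\ref{prop:rel-Betti-bounds} applied to $\bbF_p$ with the trivial seminorm, and the second is Corollary~\ref{cor:sandwichp}. You simply spell out the hypothesis checks (PID, seminorm $\geq 1$ on nonzero elements, rank $=$ dimension over a field) that the paper leaves implicit.
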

\begin{proof}
  The first inequality follows from Proposition~\ref{prop:rel-Betti-bounds},
  the second inequality is contained in Corollary~\ref{cor:sandwichp}.
\end{proof}
Here is a refined $p$-torsion estimate of the same spirit:
\begin{proposition}\label{prop:rel-Betti-bounds-2}
  Let $M$ be an oriented compact connected manifold and let $n,m \in \bbN$.  Then
  \[ \dim_{\bbF_p} p^{m} H_n(M;\bbZ/p^{m+1}\bbZ) \leq \sv{ p^{m} \cdot [M,\partial M]}_{(\bbZ/p^{m+1}\bbZ)}.
  \]
\end{proposition}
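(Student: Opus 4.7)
The plan is to mimic the argument of Proposition~\ref{prop:rel-Betti-bounds}, but to apply Poincar\'e--Lefschetz duality to a relative cycle representing~$p^{m}\cdot[M,\partial M]$ instead of the fundamental class itself, and then to exploit the extra factor of~$p^{m}$ to cut the image down to the $p$-torsion subgroup of interest.

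Let $d=\dim M$. I would first fix an arbitrary $c=\sum_{j=1}^{k}a_{j}\sigma_{j}\in C_{d}(M;\bbZ/p^{m+1}\bbZ)$ representing $p^{m}\cdot[M,\partial M]$ in reduced form, where $k$ equals $|c|_{1,\mathrm{triv}}$ (the number of simplices appearing in~$c$). Since $[c]=p^{m}\cdot[M,\partial M]$, the cap product map
\[
\Phi_{c}\colon H^{d-n}(M,\partial M;\bbZ/p^{m+1}\bbZ)\longrightarrow H_{n}(M;\bbZ/p^{m+1}\bbZ),\qquad [f]\longmapsto [f]\cap[c],
\]
factors as the Poincar\'e--Lefschetz isomorphism followed by multiplication by~$p^{m}$. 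In particular, the image of~$\Phi_{c}$ is precisely $p^{m}H_{n}(M;\bbZ/p^{m+1}\bbZ)$.

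Next, by the explicit formula~\eqref{eq:duality-cap-formula}, $\Phi_{c}$ factors through the free $\bbZ/p^{m+1}\bbZ$-module of rank at most~$k$ generated by the front faces $\sigma_{1}|_{[0,\dots,n]},\dots,\sigma_{k}|_{[0,\dots,n]}$. Therefore $p^{m}H_{n}(M;\bbZ/p^{m+1}\bbZ)$ is generated, as a $\bbZ/p^{m+1}\bbZ$-module, by at most $k$ elements. Since $p\cdot(p^{m}x)=p^{m+1}x=0$ for every~$x$, the group $p^{m}H_{n}(M;\bbZ/p^{m+1}\bbZ)$ is an $\bbF_{p}$-vector space, and any generating set over $\bbZ/p^{m+1}\bbZ$ is also a generating set over~$\bbF_{p}$. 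Hence
\[
\dim_{\bbF_{p}}p^{m}H_{n}(M;\bbZ/p^{m+1}\bbZ)\;\leq\;k\;=\;|c|_{1,\mathrm{triv}}.
\]

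Taking the infimum over all cycles $c$ representing $p^{m}\cdot[M,\partial M]$ yields the claimed bound by the very definition of $\|p^{m}\cdot[M,\partial M]\|_{(\bbZ/p^{m+1}\bbZ)}$. There is no real obstacle here; the only point that deserves care is the identification of the image of~$\Phi_{c}$ with $p^{m}H_{n}(M;\bbZ/p^{m+1}\bbZ)$ (using that capping with $[M,\partial M]$ is an isomorphism and is $\bbZ/p^{m+1}\bbZ$-linear, so scaling the fundamental class by $p^{m}$ scales the image by $p^{m}$) and the passage from being generated over $\bbZ/p^{m+1}\bbZ$ to counting $\bbF_{p}$-dimension, which is automatic because the module in question is $p$-torsion.
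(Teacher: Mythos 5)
Your proof is correct and follows essentially the same route as the paper: cap against a minimal cycle representing $p^m[M,\partial M]$, identify the image of the resulting map $H^{d-n}(M,\partial M;\bbZ/p^{m+1}\bbZ)\to H_n(M;\bbZ/p^{m+1}\bbZ)$ with $p^mH_n(M;\bbZ/p^{m+1}\bbZ)$, and use the explicit duality formula to see that this image is a subquotient of $(\bbZ/p^{m+1}\bbZ)^k$, hence $\bbF_p$-dimension at most $k$. The only step you leave implicit (and the paper makes explicit) is that a subquotient of $(\bbZ/p^{m+1}\bbZ)^k$ is generated by at most $k$ elements, which is needed before passing to $\bbF_p$-dimension; otherwise the two arguments coincide.
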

\begin{proof}
  Let us first note that $p^m H_n(M;\bbZ/p^{m+1}\bbZ)$ indeed carries
  a canonical $\bbF_p$-vector space structure.
  
  We now proceed as in the proof of Proposition~\ref{prop:rel-Betti-bounds} and
  pick a cycle $c = \sum_{j=1}^k a_j \cdot \sigma_j$ that represents
  $p^m[M,\partial M]$.  We may assume that $k = \sv{ p^{m}[M,\partial
      M]}_{(\bbZ/p^{m+1}\bbZ)}$.

  By Poincar\'e-Lefschetz duality (see
  \eqref{eq:duality-cap-formula}), the cap product with $c$ yields a
  surjection
  \[
    H^{d - n}(M,\partial M;\bbZ/p^{m+1}\bbZ) \longrightarrow p^m H_n(M;\bbZ/p^{m+1}\bbZ)
  \]
  and thus every homology class on the right hand side can be
  represented by a chain on the simplices $\sigma_1|_{[0,\dots,n]},
  \dots, \sigma_k|_{[0,\dots,n]}$; i.e., the right hand side is
  isomorphic to a subquotient of~$(\bbZ/p^{m+1}\bbZ)^k$.  Since every
  subquotient of~$(\bbZ/p^{m+1}\bbZ)^k$ can be generated by at most
  $k$ elements, we deduce that
   \[
    \dim_{\bbF_p} p^m H_n(M;\bbZ/p^{m+1}\bbZ) \leq k = \sv{ p^{m}\cdot [M,\partial M]}_{(\bbZ/p^{m+1}\bbZ)}. \qedhere
   \]
\end{proof}

\begin{corollary}\label{cor:bettiQp}
  If $p \in \bbN$ is prime and $M$ is an oriented compact connected
  manifold, then, for all~$n \in \bbN$, we have
  \[ b_n(M; \bbQ)
  \leq \sv{M,\partial M}_{\bbQ_p}.
  \]
\end{corollary}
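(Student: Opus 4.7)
The plan is to combine the refined torsion estimate of Proposition~\ref{prop:rel-Betti-bounds-2} with the scaling formula of Proposition~\ref{prop:scalingp}, using a universal coefficients argument to produce a lower bound on~$\dim_{\bbF_p} p^m H_n(M;\bbZ/p^{m+1}\bbZ)$ in terms of the rational Betti number.

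First, I would upgrade Proposition~\ref{prop:rel-Betti-bounds-2} to a statement in terms of $\sv{\args}_{\bbZ_p}$. Applying Corollary~\ref{cor:sandwichp} (items~(i) and~\eqref{it:modpm-sandwich}) with the ring~$\bbZ/p^{m+1}\bbZ$ gives
\[
\sv{p^m \cdot [M,\partial M]}_{(\bbZ/p^{m+1}\bbZ)}
\leq p^m \cdot \sv{p^m \cdot [M,\partial M]}_{\bbZ/p^{m+1}\bbZ}
\leq p^m \cdot \sv{p^m \cdot [M,\partial M]_{\bbZ_p}}_{\bbZ_p},
\]
so that Proposition~\ref{prop:rel-Betti-bounds-2} yields
\[
\dim_{\bbF_p} p^m H_n(M;\bbZ/p^{m+1}\bbZ)
\leq p^m \cdot \sv{p^m \cdot [M,\partial M]_{\bbZ_p}}_{\bbZ_p}.
\]

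Next, I would establish the lower bound $b_n(M;\bbQ) \leq \dim_{\bbF_p} p^m H_n(M;\bbZ/p^{m+1}\bbZ)$. Writing $H_n(M;\bbZ) \cong \bbZ^{b_n(M;\bbQ)} \oplus T$ for some finitely generated torsion abelian group~$T$, the universal coefficient theorem produces a natural split surjection
\[
H_n(M;\bbZ/p^{m+1}\bbZ) \twoheadrightarrow H_n(M;\bbZ) \otimes_{\bbZ} \bbZ/p^{m+1}\bbZ,
\]
whose target contains the free summand~$(\bbZ/p^{m+1}\bbZ)^{b_n(M;\bbQ)}$. Multiplication by~$p^m$ commutes with this surjection, and its image on this free summand is $(p^m \bbZ/p^{m+1}\bbZ)^{b_n(M;\bbQ)} \cong \bbF_p^{b_n(M;\bbQ)}$; hence the claimed lower bound follows.

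Combining these two estimates gives
\[
b_n(M;\bbQ) \leq p^m \cdot \sv{p^m \cdot [M,\partial M]_{\bbZ_p}}_{\bbZ_p}
\]
for every $m \in \bbN$. Letting $m \to \infty$ and invoking Proposition~\ref{prop:scalingp} then yields $b_n(M;\bbQ) \leq \sv{M,\partial M}_{\bbQ_p}$. There is no genuine obstacle here; the only point requiring care is making sure the universal coefficient splitting interacts correctly with multiplication by~$p^m$, which is immediate since everything is natural in the coefficients.
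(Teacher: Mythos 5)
Your proposal is correct and takes essentially the same approach as the paper: it combines Proposition~\ref{prop:rel-Betti-bounds-2} with Corollary~\ref{cor:sandwichp} and the universal coefficient theorem, and then lets $m\to\infty$ via Proposition~\ref{prop:scalingp}. One small inaccuracy in phrasing: the universal coefficient theorem gives a split \emph{injection} $H_n(M;\bbZ)\otimes_{\bbZ}\bbZ/p^{m+1}\bbZ \hookrightarrow H_n(M;\bbZ/p^{m+1}\bbZ)$ (with a non-natural splitting), not a natural split surjection the other way, but all that is really needed is that $(\bbZ/p^{m+1}\bbZ)^{b_n(M;\bbQ)}$ is a direct summand and that multiplication by~$p^m$ respects direct-sum decompositions, which your argument in substance uses.
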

\begin{proof}
It follows from the universal coefficient theorem that
\[
H_n (M;\bbZ/p^{m+1}\bbZ) \cong (\bbZ/p^{m+1}\bbZ)^{b_n(M;\bbQ)} \oplus
T_{m+1}\] for a finite abelian group~$T_{m+1}$ of exponent at most
$p^{m+1}$.  In particular, we observe that $\dim_{\bbF_p} p^{m}H_n
(M;\bbZ/p^{m+1}\bbZ) \geq b_n(M;\bbQ)$.

Now Proposition~\ref{prop:rel-Betti-bounds-2} and Corollary
\ref{cor:sandwichp} show that
\begin{align*}
 	b_n(M;\bbQ) &\leq \sv{ p^{m}[M,\partial M]}_{(\bbZ/p^{m+1}\bbZ)}\\
 	&\leq p^{m} \sv{ p^{m}[M,\partial M]}_{\bbZ/p^{m+1}\bbZ}\\
	&\leq p^{m} \sv{ p^{m}[M,\partial M]}_{\bbZ_p}.
\end{align*}
As $m$ tends to $\infty$ we apply Proposition~\ref{prop:scalingp} to
complete the proof.
\end{proof}

\subsection{Maximality of the fundamental class}

Similarly to the weightless case~\cite[Proposition~2.6,
  Proposition~2.10]{loehfp}, also in the $p$-adic case the fundamental
class has maximal norm, which in particular leads to a basic estimate
for products.

\begin{proposition}[maximality of the fundamental class]\label{prop:fclmax}
  Let $(R,|\cdot|)$ be a seminormed ring that satisfies~$|x| \leq 1$
  for all~$x \in R$, let $M$ be an oriented compact connected manifold,
  let $n \in \bbN$, and let $\alpha \in H_n(M;R)$ or~$\alpha \in H_n(M, \partial M;R)$.
  Then
  \[ \| \alpha \|_{1,R} \leq \sv{M,\partial M}_R.
  \]
\end{proposition}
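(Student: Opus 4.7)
The plan is to realize every homology class as a cap product with the fundamental class and then read off the desired estimate directly from the explicit cap product formula~\eqref{eq:duality-cap-formula} together with the hypothesis $|x|\leq 1$. Set $d = \dim M$. By Poincar\'e--Lefschetz duality, there exists a cohomology class $[f]$ such that $\alpha = [f]\cap [M,\partial M]_R$: if $\alpha\in H_n(M;R)$ we take $[f]\in H^{d-n}(M,\partial M;R)$, and if $\alpha\in H_n(M,\partial M;R)$ we take $[f]\in H^{d-n}(M;R)$. Crucially, the formula~\eqref{eq:duality-cap-formula} applies uniformly in both cases.

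Given $\varepsilon > 0$, I would pick a relative fundamental cycle $c = \sum_{j=1}^k a_j \cdot \sigma_j \in Z(M,\partial M;R)$ with $|c|_{1,R} \leq \sv{M,\partial M}_R + \varepsilon$. By \eqref{eq:duality-cap-formula}, the class $\alpha$ is then represented by the cycle
\[
  \pm \sum_{j=1}^k a_j \cdot f(\sigma_j|_{[n,\dots,d]}) \cdot \sigma_j|_{[0,\dots,n]}.
\]
Using submultiplicativity of the seminorm and the hypothesis $|f(\sigma_j|_{[n,\dots,d]})|\leq 1$, the triangle inequality gives
\[
  \sv{\alpha}_{1,R} \;\leq\; \sum_{j=1}^k |a_j|\cdot\bigl|f(\sigma_j|_{[n,\dots,d]})\bigr| \;\leq\; \sum_{j=1}^k |a_j| \;=\; |c|_{1,R} \;\leq\; \sv{M,\partial M}_R + \varepsilon.
\]
Letting $\varepsilon \to 0$ concludes the proof.

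There is no substantive obstacle; the argument is a direct unpacking of the cap product formula under the norm hypothesis. The only subtle point worth checking is that in the case $\alpha \in H_n(M;R)$ the cochain $f$ is chosen to be a \emph{relative} cochain (since $[f]\in H^{d-n}(M,\partial M;R)$), so that $f$ vanishes on the faces $\sigma_j|_{[n,\dots,d]}$ whenever $\sigma_j$ lies in $\partial M$; this guarantees that the displayed representative is an \emph{absolute} cycle, as required. In the case $\alpha\in H_n(M,\partial M;R)$ no such restriction on $f$ is needed and the displayed chain is automatically a relative cycle.
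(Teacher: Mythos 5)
Your argument is correct and follows essentially the same route as the paper: realize $\alpha$ as a cap product with the fundamental class via the explicit formula~\eqref{eq:duality-cap-formula}, then bound the $\ell^1$-norm of the resulting representative using $|f(\cdot)|\leq 1$. The paper's proof passes to the infimum at the end rather than using an $\varepsilon$-argument, and it leaves implicit the point you make explicit about $f$ being a relative cochain (to ensure an absolute cycle), but these are cosmetic differences.
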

\begin{proof}
  The proof works as in the weightless case~\cite[Proposition~2.6]{loehfp}:
  Let $\alpha \in H_n(M;R)$ (the other case works in the same way); moreover, 
  let $\varphi \in H^{d-n}(M,\partial M;R)$ be Poincar\'e dual to~$\alpha$,
  i.e., $\varphi \cap [M]_R = \alpha$. 

  Let $f \in C^{d-n}(M;R)$ be a relative cocycle representing~$\varphi$
  and let $c = \sum_{j=1}^k a_j \sigma_j \in Z(M,\partial M;R)$. Then
  the explicit Poincar\'e duality formula~\eqref{eq:duality-cap-formula}
  shows that
  \[ z := \pm\sum_{j=1}^k a_j \cdot f(\sigma_j|_{[n,\dots,d]}) \cdot \sigma_j|_{[0,\dots, n]}
  \]
  is a cycle representing~$\alpha$. In particular, the hypothesis on the
  seminorm on~$R$ implies that
  \[ \|\alpha\|_{1,R} \leq |z|_{1,R}
  \leq \sum_{j=1}^k |a_j| \cdot \bigl|f(\sigma_j|_{[n,\dots,d]})\bigr|
  \leq \sum_{j=1}^k |a_j| = |c|_{1,R}.
  \]
  Taking the infimum over all~$c$ proves that~$\|\alpha\|_{1,R} \leq \sv{M,\partial M}_R$.
\end{proof}

\begin{corollary}[product estimate]\label{cor:prod}
  Let $(R,|\cdot|)$ be a seminormed ring that satisfies~$|x| \leq 1$
  for all~$x \in R$, and let $M$ and $N$ be oriented closed connected manifolds.
  Then
  \[ \max\bigl( \sv M_R, \sv N_R \bigr)
  \leq \sv {M \times N}_R
  \leq {{\dim M + \dim N} \choose {\dim M}} \cdot \sv M _R \cdot \sv N_R.
  \]
\end{corollary}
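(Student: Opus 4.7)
My plan is to prove the two inequalities separately. The lower bound uses the maximality principle from Proposition \ref{prop:fclmax}, while the upper bound uses the classical Eilenberg--Zilber shuffle decomposition of a product of simplices.

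For the \emph{lower bound}, I will show $\sv{M}_R \leq \sv{M \times N}_R$; the inequality for $N$ is symmetric. Fix any $n_0 \in N$, let $\iota \colon M \to M \times N$, $m \mapsto (m,n_0)$, be the inclusion, and let $\operatorname{pr}_M \colon M \times N \to M$ be the projection. Set $\alpha := H_{\dim M}(\iota;R)([M]_R)$. Since $\operatorname{pr}_M \circ \iota = \operatorname{id}_M$, naturality gives $H_{\dim M}(\operatorname{pr}_M;R)(\alpha) = [M]_R$; as induced singular chain maps do not increase the $\ell^1$-seminorm (each simplex is sent to a single simplex), it follows that
\[
    \sv{M}_R = \bigl\|[M]_R\bigr\|_{1,R} \leq \|\alpha\|_{1,R}.
\]
The hypothesis $|x| \leq 1$ puts us in the setting of Proposition \ref{prop:fclmax}, which yields $\|\alpha\|_{1,R} \leq \sv{M \times N}_R$ and finishes this step.

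For the \emph{upper bound}, set $p := \dim M$ and $q := \dim N$. Choose fundamental cycles $c_M = \sum_i a_i \sigma_i \in Z(M;R)$ and $c_N = \sum_j b_j \tau_j \in Z(N;R)$. The Eilenberg--Zilber shuffle product $c_M \times c_N \in C_{p+q}(M \times N;R)$ is a fundamental cycle of $M \times N$, and for each pair $(i,j)$ the product $\sigma_i \times \tau_j$ is expressed as a signed sum of exactly $\binom{p+q}{p}$ singular $(p+q)$-simplices in $M \times N$, one for each $(p,q)$-shuffle. The triangle inequality and submultiplicativity of $|\cdot|$ then give
\[
    |c_M \times c_N|_{1,R}
    \leq \binom{p+q}{p} \sum_{i,j} |a_i b_j|
    \leq \binom{p+q}{p} \cdot |c_M|_{1,R} \cdot |c_N|_{1,R}.
\]
Taking the infimum over $c_M$ and $c_N$ yields $\sv{M \times N}_R \leq \binom{p+q}{p} \cdot \sv{M}_R \cdot \sv{N}_R$.

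The argument is a direct transcription of the classical real-coefficient product estimate, so no serious obstacle is expected. The only points worth double-checking are that $\operatorname{pr}_M$ and $\iota$ induce norm-nonincreasing chain maps (immediate from the triangle inequality), and that the Eilenberg--Zilber shuffle product is defined over an arbitrary coefficient ring and still produces a fundamental cycle of $M \times N$ — both standard. Note that the hypothesis $|x| \leq 1$ is used only for the lower bound, via Proposition \ref{prop:fclmax}; submultiplicativity alone suffices for the upper bound.
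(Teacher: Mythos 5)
Your proof is correct and takes essentially the same route as the paper: the lower bound via the isometric embedding of $H_*(M;R)$ into $H_*(M\times N;R)$ (using the inclusion and projection) combined with Proposition~\ref{prop:fclmax}, and the upper bound via the Eilenberg--Zilber shuffle product, which is exactly the ``usual homological cross product argument'' the paper cites. You merely spell out details that the paper delegates to references.
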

\begin{proof}
  The upper bound is the usual homological cross product
  argument~\cite[Theorem~F.2.5]{bp}.  Let $x \in N$. Then the
  inclusion~$M \longrightarrow M \times \{x\} \longrightarrow M \times
  N$ and the projection~$M \times N \longrightarrow M$ show that
  $H_*(M;R)$ embeds isometrically into~$H_*(M \times N;R)$~\cite[proof
    of Proposition~2.10]{loehfp}. We can then apply
  Proposition~\ref{prop:fclmax} to~$M$ and~$N$.
\end{proof}

In particular, Proposition~\ref{prop:fclmax} and Corollary~\ref{cor:prod}
apply to $\bbZ_p$:

\begin{corollary}
  Let $p$ be a prime and let $M$ and $N$ be oriented closed connected manifolds.
  Then
  \[ \max\bigl( \sv M_{\bbZ_p}, \sv N_{\bbZ_p} \bigr)
  \leq \sv {M \times N}_{\bbZ_p}
  \leq {{\dim M + \dim N} \choose {\dim M}} \cdot \sv M _{\bbZ_p} \cdot \sv N_{\bbZ_p}.
  \]
\end{corollary}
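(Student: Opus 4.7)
The plan is to simply verify that $(\bbZ_p,|\cdot|_p)$ satisfies the hypothesis of Corollary~\ref{cor:prod} and then invoke it. The $p$-adic absolute value on~$\bbZ_p$ takes values in~$\{0\} \cup \{p^{-r} \mid r \in \bbN\}$, so $|x|_p \leq 1$ for every~$x \in \bbZ_p$; this is the only condition needed on the coefficient ring.

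Once this is noted, the upper bound is immediate from Corollary~\ref{cor:prod} applied to~$R = \bbZ_p$. For the lower bound, the same corollary gives~$\max(\sv{M}_{\bbZ_p}, \sv{N}_{\bbZ_p}) \leq \sv{M \times N}_{\bbZ_p}$ directly (again using~$|\cdot|_p \leq 1$ on~$\bbZ_p$ in Proposition~\ref{prop:fclmax}).

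There is no real obstacle here: the statement is literally a specialization of the previously established product estimate to the seminormed ring~$\bbZ_p$. The proof is therefore one line, essentially a citation of Corollary~\ref{cor:prod} together with the remark that~$|\cdot|_p \leq 1$ on~$\bbZ_p$.
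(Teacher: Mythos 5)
Your proof is correct and matches the paper's approach exactly: the paper states this corollary immediately after Corollary~\ref{cor:prod} with the remark that Proposition~\ref{prop:fclmax} and Corollary~\ref{cor:prod} apply to~$\bbZ_p$, which is precisely the observation that $|\cdot|_p \leq 1$ on~$\bbZ_p$. Nothing more is needed.
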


It might be tempting to go for a duality principle between singular
homology with $\bbQ_p$-coefficients and bounded cohomology with
$\bbQ_p$-coefficients. However, one should be aware that the considered
$\ell^1$-norm on the singular chain complex is an archimedean
construction (the $\ell^1$-norm) of a non-archimedean norm (the norm
on~$\bbQ_p$). In this mixed situation, no suitable version of the
Hahn-Banach theorem can hold.

\section{Basic examples}\label{sec:examples}

\subsection{Spheres, tori, projective spaces}

\begin{example}[spheres]\label{exa:sphere}
  Let $d \in \bbN$. It is known that~\cite{loeh-odd}
  \[ \sv{S^d}_{\bbZ}
  =
  \begin{cases}
    1 & \text{if $d$ is odd}\\
    2 & \text{if $d$ is even}.
  \end{cases}
  \]
  Now let $p$ be a prime number.
  \begin{itemize}
  \item If $d$ is odd, then $\sv{S^d}_{\bbZ_p} = \sv{S^d}_{\bbQ_p} = 1$: 
    The Betti number estimate gives~$1 = b_0(S^d;\bbQ) \leq \sv{S^d}_{\bbQ_p}
    \leq \sv{S^d}_{\bbZ_p}$
    (Corollary~\ref{cor:bettiQp}, Corollary~\ref{cor:sandwichp}).
    Moreover, we also have~$\sv{S^d}_{\bbZ_p} \leq \sv{S^d}_\bbZ = 1$.
  \item If $d$ is even, then $\sv{S^d}_{\bbZ_p} = \sv{S^d}_{\bbQ_p} = 2$: 
    We have
    \[ \sv{S^d}_{\bbQ_p} \leq \sv{S^d}_{\bbZ_p} \leq \sv{S^d}_\bbZ = 2
    \]
    Thus, it suffices to show that $\sv{S^d}_{\bbQ_p} \geq 2$. 
    In view of Corollary~\ref{cor:densityp}, we only need to show
    that $\sv{S^d}_{\bbQ,|\cdot|_p} \geq 2$. Let $c \in Z(S^d; \bbQ)$.
    Then we can write~$c$ in the form
    \[ c = \sum_{j \in J} \frac{a_j}m \cdot \sigma_j
         + \sum_{k \in K} p \cdot \frac{b_k}m \cdot \tau_k,
    \]
    where $J$ and $K$ are finite sets, the coefficients~$a_j$, $b_k$ and~$m$
    are integral and where $p$ does \emph{not} divide $m$ or the~$a_j$ with~$j \in J$. 
    Then
    \[ m \cdot c = \sum_{j \in J} a_j \cdot \sigma_j + p \cdot \sum_{k \in K} b_k \cdot \tau_k
    \]
    is a cycle in~$C_d(S^d;\bbZ)$, representing~$m \cdot
    [M]_\bbZ$. Because $p$ does not divide~$m$, we obtain that~$J\neq
    \emptyset$. (If $J$ were empty, then $\sum_{k\in K} b_k \cdot
    \tau_k$ would also be a cycle and whence $m\cdot [M]_\bbZ$ would
    be divisible by~$p$, which is impossible).

    Let $\varrho$ denote the constant singular $(d-1)$-simplex on the
    one-point space~$\bullet$. Applying the chain map induced by the
    constant map~$S^d \longrightarrow \bullet$ to the
    equation~$\partial (m \cdot c) = 0$ shows that
    \begin{align*}
      0 & = \sum_{j \in J} a_j \cdot \varrho + p \cdot \sum_{k \in K} b_k \cdot \varrho
      \\
      & = \biggl( \sum_{j \in J} a_j + p \cdot \sum_{k \in K} b_k
          \biggr) \cdot \varrho
    \end{align*}
    holds in~$C_{d-1}(\bullet;\bbZ)$. 
    Therefore, we obtain
    \[ 0 = \sum_{j \in J} a_j + p \cdot \sum_{k \in K} b_k.
    \]
    Because $J \neq \emptyset$ and $p$ does not divide any of
    the~$a_j$ with~$j \in J$, we see that $J$ contains at least two
    elements~$i,j$.  In particular,
    \[ |c|_{1,p} \geq |a_i|_p + |a_j|_p \geq 1+ 1 = 2. 
    \]
    Therefore, $\|S^d\|_{\bbQ,|\cdot|_p} \geq 2$.
  \end{itemize}
\end{example}

\begin{corollary}\label{cor:lowerboundp}
  Let $p$ be a prime number and let $M$ be an oriented closed connected
  (non-empty) manifold.
  \begin{enumerate}
  \item Then $\sv{M,\partial M}_{\bbQ_p} \geq 1$.
  \item If $\dim M$ is even, then $\sv{M,\partial M}_{\bbQ_p} \geq 2$.  
  \end{enumerate}
\end{corollary}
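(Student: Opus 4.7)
The plan is to reduce both statements to results already established earlier in the paper, so that no genuinely new argument is needed.

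For assertion~(1), the key point is that $M$ connected and non-empty gives $b_0(M;\bbQ) = 1$. First I would invoke Corollary~\ref{cor:bettiQp} with $n = 0$, which immediately yields
\[ 1 = b_0(M;\bbQ) \leq \sv{M,\partial M}_{\bbQ_p}, \]
and we are done. There is no real obstacle here.

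For assertion~(2), the plan is to transfer the lower bound from the even-dimensional sphere via a degree-one collapse map. Let $d = \dim M$ and recall that for an oriented closed connected $d$-manifold there is the classical construction: choose a small closed disk $D \subset M$ and collapse its complement $M \setminus \mathring D$ to a point; this yields a continuous map $f \colon M \longrightarrow S^d$ of degree one. Next I would apply Corollary~\ref{cor:degestimatep}(1) to~$f$. Since $|\deg f|_p = |1|_p = 1$, this gives
\[ \sv{S^d}_{\bbQ_p} \leq \sv M_{\bbQ_p}. \]
Finally, Example~\ref{exa:sphere} computes $\sv{S^d}_{\bbQ_p} = 2$ for even~$d$, which finishes the argument.

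The only point that requires a moment's care is the existence of a degree-one map $M \longrightarrow S^d$, but this is a standard fact about oriented closed connected manifolds, so no serious obstacle is expected. If one preferred to avoid quoting the sphere computation, an alternative route would be to rerun the rational-cycle argument from Example~\ref{exa:sphere} directly on~$M$, writing a relative fundamental cycle as $\frac1m \sum a_j \sigma_j + \frac{p}{m} \sum b_k \tau_k$ with $p \nmid m$ and $p \nmid a_j$, and pushing forward via the constant map $M \to \bullet$; here the hypothesis that $d$ is even ensures that a constant $d$-simplex has boundary $\varrho$ rather than $0$, giving the required relation $\sum a_j + p \sum b_k = 0$ that forces $|J| \geq 2$. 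Either path works, but the degree-one collapse is by far the shorter of the two.
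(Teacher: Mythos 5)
Your proof is correct, and for part~(2) it is identical to the paper's argument: collapse to a degree-one map $M \to S^{\dim M}$, apply the degree estimate (Corollary~\ref{cor:degestimatep}), and cite the sphere computation (Example~\ref{exa:sphere}). For part~(1), however, you take a slightly different and in fact more direct route: the paper handles both parts uniformly by the same collapse-to-sphere argument (using $\sv{S^d}_{\bbQ_p} = 1$ when $d$ is odd and $= 2 \geq 1$ when $d$ is even), whereas you bypass the sphere entirely and apply the Betti number estimate of Corollary~\ref{cor:bettiQp} with $n=0$ directly to $M$. Since the odd-sphere computation in Example~\ref{exa:sphere} itself relies on Corollary~\ref{cor:bettiQp}, your argument merely cuts out the intermediate space; one small bonus is that it works for compact manifolds with nonempty boundary as well, not only closed ones, so it is the more general of the two. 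Your sketched alternative for part~(2) (writing a rational cycle as $\frac1m\sum a_j\sigma_j + \frac{p}{m}\sum b_k\tau_k$ with $p\nmid m$ and $p\nmid a_j$, observing $J\neq\emptyset$ since otherwise $m[M]_{\bbZ}$ would be $p$-divisible, and pushing to a point to get $\sum a_j + p\sum b_k = 0$, forcing $|J|\geq 2$ by the parity of $d$) is also sound; it reproduces the computation inside Example~\ref{exa:sphere} with $S^d$ replaced by $M$ and shows the sphere plays no essential role there either.
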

\begin{proof}
  As $M$ is non-empty and closed, there exists a map~$M
  \longrightarrow S^{\dim M}$ of degree~$1$. We can now apply the
  degree estimate (Corollary~\ref{cor:degestimatep}) and the
  computation for spheres (Example~\ref{exa:sphere}).
\end{proof}

\begin{example}[the torus]\label{exa:torus}
  Let $p$ be a prime number. Then the $2$-torus~$T^2$ satisfies
  \[ \sv{T^2}_{\bbZ_p} = \sv{T^2}_{\bbQ_p}  = 2.
  \]
  On the one hand, we can easily represent the fundamental class
  of~$T^2$ by two singular triangles; on the other hand,
  Corollary~\ref{cor:lowerboundp} gives the lower bound.
\end{example}

\begin{example}[projective spaces]\label{exa:proj}
  Let $d \in \bbN$ be odd and let $p$ be a prime
  number.
    
  \medskip
  
  \emph{Case $p > 2$:} If $p > 2$, then $\sv{\bbR P^d}_{\bbZ_p} =
  \sv{\bbR P^d}_{\bbQ_p} = 1$: From Corollary~\ref{cor:lowerboundp},
  we obtain $\sv{\bbR P^d}_{\bbZ_p} \geq \sv{\bbR P^d}_{\bbQ_p} \geq
  1$.  Furthermore, the double covering~$S^d \longrightarrow \bbR P^d$
  and the computation for spheres (Example~\ref{exa:sphere}) show that
    \[ \sv{\bbR P^d}_{\bbZ_p} \leq \sv{S^d}_{\bbZ_p} = 1
    \qand
       \sv{\bbR P^d}_{\bbQ_p} \leq \sv{S^d}_{\bbQ_p} =1
    \]
    using $p > 2$ in Corollary~\ref{cor:degestimatep}.
    
    It should be noted that $\sv{\bbR P^d}_\bbZ =
    2$~\cite[Proposition~4.4]{loeh-odd}\cite[Example~2.7]{loehfp}.
    
    \medskip
    
  \emph{Case $p=2$:} We have~$\sv{\bbR P^d}_{\bbZ_2} = 2$, because
    $\sv{\bbR P^d}_{(\bbF_2)} = 2$~\cite[Example~2.7]{loehfp}
    and $\sv{\bbR P^d}_{\bbZ} = 2$~\cite[Proposition~4.4]{loeh-odd}
    as well as $\sv{\args}_{(\bbF_2)} \leq \sv{\args}_{\bbZ_2} \leq \sv{\args}_\bbZ$
    (Corollary~\ref{cor:sandwichp}).

    In addition, we claim that $\sv{\bbR P^d}_{\bbQ_2} = 2$. We know
    that $\sv{\bbR P^d}_{\bbQ_2} \leq \sv{\bbR P^d}_{\bbZ_2} =
    2$. \emph{Assume} for a contradiction that $\sv{\bbR P^d}_{\bbQ_2}
    < 2$. Then Proposition \ref{prop:ZpQp} implies that $\sv{\bbR
      P^d}_{\bbQ_2} = \sv{\bbR P^d}_{\bbZ_2} = 2$, which yields a
    contradiction.
\end{example}

\subsection{Surfaces}

Recall that $\Sigma_g$ denotes the oriented closed connected surface
of genus~$g$ and $\Sigma_{g,b}$ denotes the surface of genus $g$ with
$b \geq 1$ boundary components.

\begin{proof}[Proof of Theorem \ref{thm:surfaces}]
The case~$\Sigma_0 \cong S^2$ is already contained in Example~\ref{exa:sphere}.

We first prove the inequalities "$\geq$". Let $M$ be $\Sigma_g$ or
$\Sigma_{g,b}$ and let $K$ denote the field of fractions of $R$. We
endow $K$ with the trivial absolute value.  Using the inequality
$\sv{M,\partial M}_{(R)} \geq \sv{M,\partial M}_{(K)}$ from
Proposition \ref{prop:monotonicity}, we see that it is sufficient to
establish the lower bound for the field $K$.

Let $c = \sum_{j=1}^k a_j \sigma_j \in C_2(M;K)$ be a fundamental
cycle of minimal norm, i.e., $| c |_1 = k$ is minimal.  Consider the
semi-simplicial set $X$ generated by~$c$ and its chain complex
\[ C_0(X,\partial X;K) \stackrel{\partial_1}{\longleftarrow} C_1(X,\partial X;K) \stackrel{\partial_2}{\longleftarrow} C_2(X,\partial X;K) = C_2(X;K).\]
We observe that $\dim_K C_2(X;K) = k$ and we claim that the kernel of
$\partial_2$ is $1$-dimensional; i.e., it is the line spanned by
$c$. Assume for a contradiction that $\dim_K \ker(\partial_2) \geq
2$. In this case the relative fundamental cycles supported on
$\{\sigma_1,\dots,\sigma_k\}$ form an affine subspace of dimension at
least~$1$ in~$C_2(M;K)$.  Using elementary linear algebra we deduce
that there is a fundamental cycle supported on a proper subset of
$\{\sigma_1,\dots,\sigma_k\}$, which contradicts the minimality of
$k$.

The $k$ different simplices in $X_2$ have at most $3 k$ distinct
faces. Moreover, since $H_2(M,\partial M; K) \to H_{1}(\partial M;K)$
maps the relative fundamental class of~$M$ to a fundamental class of
$\partial M$, the boundary of $c$ touches every connected component of
$\partial M$ at least once. In other words, at most $3k - b$ faces of
the simplices in $X_2$ are not contained in $\partial M$.  As $c$ is a
relative cycle, every face of $X_2$ that is not contained in the
boundary occurs at least twice.  We conclude that $2 \dim_K
C_1(X,\partial X; K) \leq 3k - b$.

By Lemma \ref{lem:comparison-simplicial} we have the inequality
\[\dim_K H_1(X,\partial X; K) \geq H_1(M, \partial M; K) = 2g + b - 1 + \delta_{b,0}\]
and the following calculation completes the first part of the proof
\begin{align*}
k -b +2 &= 3k -b - 2(k -1) \\
&\geq 2\dim_K C_1(X,\partial X;K) - 2 \dim_K B_1(X,\partial X; K)\\
&\geq 2 \dim_K H_1(X,\partial X; K) \geq 4g + 2b -2 + 2\delta_{b,0}.
\end{align*}
Moreover, in the pathological case of~$\Sigma_{0,1}$, we have $\sv{\Sigma_{0,1}}_{(R)} \geq 1$
by Proposition~\ref{prop:rel-Betti-bounds}.

In order to show that the lower bound is sharp, we construct explicit
relative fundamental cycles with the desired number of $2$-simplices;
this is done in Proposition~\ref{prop:surfacesupper} below for the
integral simplicial volume. As integral simplicial volume is an upper
bound for~$\sv{\cdot}_{(R)}$ (Proposition~\ref{prop:monotonicity}),
this suffices to complete the proof.
\end{proof}

\def\red{dashed}

\def\vtx#1{%
  \fill #1 circle (0.1);}
\def\mygreen{black!50!white}
\def\circleplus#1#2{%
  \draw[\red,postaction={decorate}] #1 circle (#2);
}
\def\circleminus#1#2{%
  \begin{scope}[shift={#1}]
    \draw[\red] (#2,0) arc (0:180:#2);
    \draw[\red,->] (#2,0) arc (360:180:#2);
  \end{scope}
}

\begin{figure}
  \begin{tikzpicture}[very thick, decoration={markings,
        mark=at position 0.5 with {\arrow{>}}}]
    \draw[\red,->] (30:2) arc (30:148:2);
    \draw[\red,->] (150:2) arc (150:268:2);
    \draw[\red,->] (30:2) arc (30:-88:2);
    \vtx{(270:2)}
    \vtx{(30:2)}
    \vtx{(150:2)}
    \draw (0,0) node {$\oplus$};
  \end{tikzpicture}
  \qquad
  \quad
  \begin{tikzpicture}[very thick, decoration={markings,
        mark=at position 0.5 with {\arrow{>}}}]
    \circleplus{(0,0)}{2}
    \draw[postaction={decorate}] (0,-2) .. controls +(60:1.5) and +(0:1) .. (0,1);
    \draw[postaction={decorate}] (0,-2) .. controls +(120:1.5) and +(180:1) .. (0,1);
    \fill[pattern=north west lines,pattern color=black] (0,0.5) circle (0.5);
    \circleminus{(0,0.5)}{0.5}
    \vtx{(0,-2)};
    \vtx{(0,1)};
    \draw (0,1.5) node {$\oplus$};
    \draw (0,-0.5) node {$\oplus$};
  \end{tikzpicture}
    
  \caption{Left: Genus~$0$, with a single boundary component.
    Right: Genus~$0$, with two boundary components; this relative
    fundamental cycle consists of two singular $2$-simplices.}
  \label{fig:g0b12}
\end{figure}
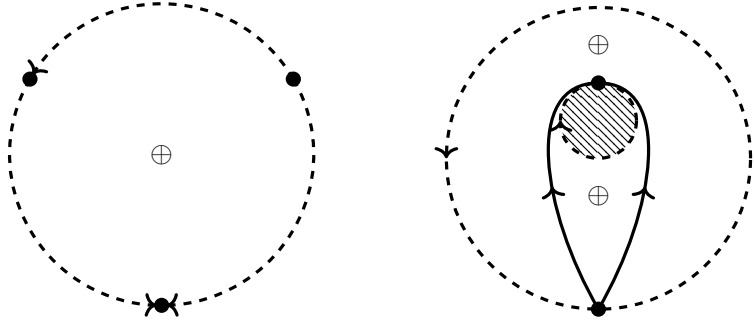

\begin{proposition}\label{prop:surfacesupper}
  Let $g \in \bbN$ and $b \in \bbN$. Then (with respect to the
  standard archimedean absolute value on~$\bbZ$) 
  \begin{enumerate}
  \item $\sv{\Sigma_g}_{\bbZ} = 4g - 2$ if $g \geq 1$ and
  \item $\sv{\Sigma_{0,1}}_{\bbZ} = 1$ and $\sv{\Sigma_{g,b}}_{\bbZ} = 3b + 4g - 4$
    for all~$g \in \bbN$ and all~$b \in \bbN_{\geq 1}$ with~$(g,b) \neq (0,1)$.
  \end{enumerate}
\end{proposition}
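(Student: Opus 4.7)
The lower bounds are immediate from the corresponding lower bounds for the weightless simplicial volume proved inside the proof of Theorem~\ref{thm:surfaces}: by Proposition~\ref{prop:monotonicity} applied to the canonical ring map $\bbZ \to R$ one has $\sv{M,\partial M}_{(R)} \leq \sv{M,\partial M}_{\bbZ}$ (since $|n|_{\mathrm{triv}} \leq |n|$ for every $n \in \bbZ$), and $\sv{\Sigma_{0,1}}_{\bbZ} \geq 1$ is trivial because a nonzero relative fundamental cycle must use at least one simplex. It therefore suffices to exhibit, in each case, an integral relative fundamental cycle whose $\ell^1$-norm realizes the claimed value. In every construction below the cycle will be the signed sum (with coefficients $\pm 1$) of the top-dimensional simplices of a Delta complex structure on $(M,\partial M)$ arising from a triangulated polygonal presentation, so its $\ell^1$-norm simply equals the number of triangles.

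For $\Sigma_g$ with $g \geq 1$ I would take the standard $4g$-gon with word $a_1 b_1 a_1^{-1} b_1^{-1} \cdots a_g b_g a_g^{-1} b_g^{-1}$ and triangulate it from one vertex into $4g - 2$ triangles. For $\Sigma_{0,1}$ a single singular $2$-simplex mapping $\Delta^2$ homeomorphically onto the disk, as in the left picture of Figure~\ref{fig:g0b12}, already is a relative fundamental cycle of $\ell^1$-norm~$1$. For $\Sigma_{g,b}$ with $b \geq 1$ and $(g,b) \neq (0,1)$ I would use the $(4g + 3b - 2)$-gon with edge word
\[
a_1 b_1 a_1^{-1} b_1^{-1} \cdots a_g b_g a_g^{-1} b_g^{-1}\, \alpha_1 d_1 \alpha_1^{-1}\, \alpha_2 d_2 \alpha_2^{-1} \cdots \alpha_{b-1} d_{b-1} \alpha_{b-1}^{-1}\, d_b,
\]
in which each $\alpha_i$ is identified with its inverse and the $d_i$ are left free as boundary edges, triangulated from one vertex into $4g + 3b - 4$ triangles.

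The only real point that needs verification is that this polygonal word actually presents $\Sigma_{g,b}$. A direct vertex computation does the job: the commutator block collapses its first $4g+1$ polygon vertices to a single class $A$; each of the $b-1$ middle blocks $\alpha_i d_i \alpha_i^{-1}$ produces exactly one new vertex class $B_i$ at which the loop $d_i$ is based, while its two outer endpoints are identified back into $A$; and the final edge $d_b$ becomes a loop at $A$. This gives $b$ vertex classes and $b$ disjoint boundary loops, and the count $V - E + F = b - (2g + 2b - 1) + 1 = 2 - 2g - b$ matches $\chi(\Sigma_{g,b})$, so the quotient is $\Sigma_{g,b}$ by the classification of surfaces. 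The small cases $(g,b) = (0,2)$ (the square $\alpha_1 d_1 \alpha_1^{-1} d_2$, which reproduces the right picture of Figure~\ref{fig:g0b12}) and $b = 1$ (the classical $(4g+1)$-gon presentation of $\Sigma_{g,1}$) serve as sanity checks; the careful tracking of edge identifications in the general case is routine but is the main and essentially only obstacle.
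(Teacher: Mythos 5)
Your proof is correct, and it follows the paper's strategy for the lower bound (invoke the lower bounds on $\sv{\args}_{(R)}$ established in the course of proving Theorem~\ref{thm:surfaces}, then transfer via Proposition~\ref{prop:monotonicity}), but it takes a genuinely different route for the upper bound. The paper constructs the explicit relative fundamental cycles by a modular, pictorial recipe: it introduces a three-simplex ``triple building block'' (Figures~\ref{fig:triplebuildingblock}--\ref{fig:multitriple}) carrying a single hole, and then assembles these blocks with a small number of extra simplices in the boundary-free part (Figures~\ref{fig:g0b12}, \ref{fig:g0b3}, \ref{fig:g1b0}, \ref{fig:g1b1}); the simplex count is then a local bookkeeping exercise per picture. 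You instead write down one closed-form polygonal presentation, the $(4g+3b-2)$-gon with edge word
$a_1 b_1 a_1^{-1} b_1^{-1}\cdots a_g b_g a_g^{-1} b_g^{-1}\,\alpha_1 d_1 \alpha_1^{-1}\cdots\alpha_{b-1} d_{b-1} \alpha_{b-1}^{-1}\, d_b$,
cone-triangulate it from one vertex, and verify that the quotient really is $\Sigma_{g,b}$ by counting vertex classes and boundary loops and matching Euler characteristics against the classification of surfaces. Your vertex computation is right (the commutator block collapses $v_0,\dots,v_{4g}$ to one class; each $\alpha_i d_i \alpha_i^{-1}$ block feeds its outer endpoints back to that class and creates one new class for $d_i$; $d_b$ closes up at the base class), and the fan triangulation of an $n$-gon gives $n-2$ triangles whose signed sum has boundary equal to the polygon's edge path, which after the orientation-reversing gluings of the paired edges lands exactly in the $d_i$'s. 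What the paper's version buys is elementary visual verifiability without appealing to the classification theorem, and the building blocks are reused elsewhere in the paper; what yours buys is a uniform formula covering all $(g,b)$ at once with a single, routine identification check.
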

\begin{proof}
  The lower bounds follow from Theorem~\ref{thm:surfaces} and
  Proposition~\ref{prop:monotonicity}.  Therefore, it suffices to
  establish the upper bounds: Let $g,b \in \bbN$. In the following
  pictures, boundary components are dashed and holes are
  shaded with stripes. 
\begin{itemize}
\item In the case~$g =0$, $b=1$, a single $2$-simplex suffices (Figure~\ref{fig:g0b12}, left).
\item In the case~$g=0$ and $b=2$, two $2$-simplices suffice (Figure~\ref{fig:g0b12}, right).
\item If $g=0$ and $b\geq 3$, then Figure~\ref{fig:g0b3} shows how to construct
  a fundamental cycle consisting of
  \[ 1+ 1 + 3 (b-2) = 3 b -4
  \]
  $2$-simplices. Here, we use $(b-2)$ triple building blocks of
  Figure~\ref{fig:triplebuildingblock}, each consisting of three
  $2$-simplices (Figure~\ref{fig:multitriple}).
\item If $g \geq 1$ and $b=0$, we use the classical decomposition of
  the $4g$-gon (whose edges will be identified according to the labels)
  into $4g -2$ simplices with the signs and orientations
  indicated in Figure~\ref{fig:g1b0}. 
\item If $g \geq 1$ and $b \geq 1$, we can use the construction
  of Figure~\ref{fig:g1b1} with $(b-1)$ triple building blocks,
  where the upper part of the polygon is decomposed as in the
  closed higher genus case (Figure~\ref{fig:g1b0}). Hence,
  \[ 4 g - 2 + 1 + 3 (b-1)
     = 4 g + 3 b - 4
  \]
  $2$-simplices suffice. \qedhere
\end{itemize}
\end{proof}

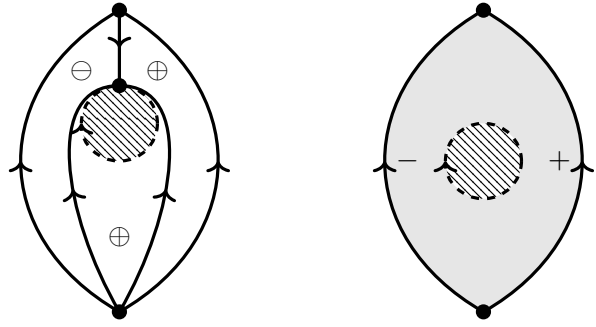
\begin{figure}
  \begin{tikzpicture}[very thick, decoration={markings,
    mark=at position 0.5 with {\arrow{>}}}]
    \draw[postaction={decorate}] (0,0) .. controls +(30:2) and +(-30:2) .. (0,4);
    \draw[postaction={decorate}] (0,0) .. controls +(150:2) and +(210:2) .. (0,4);
    \draw[postaction={decorate}] (0,0) .. controls +(60:1.5) and +(0:1) .. (0,3);
    \draw[postaction={decorate}] (0,0) .. controls +(120:1.5) and +(180:1) .. (0,3);
    \draw[postaction={decorate}] (0,4) -- (0,3);
    \fill[pattern=north west lines,pattern color=black] (0,2.5) circle (0.5);
    \circleminus{(0,2.5)}{0.5}
    \vtx{(0,0)}
    \vtx{(0,4)}
    \vtx{(0,3)}
    \draw (0,1) node {$\oplus$};
    \draw (0.5,3.2) node {$\oplus$};
    \draw (-0.5,3.2) node {$\ominus$};
  \end{tikzpicture}
  \qquad
  \quad
  \begin{tikzpicture}[very thick, decoration={markings,
        mark=at position 0.5 with {\arrow{>}}}]
    \fill[\mygreen!20] (0,0) .. controls +(30:2) and +(-30:2) .. (0,4) --
                       (0,4) .. controls +(210:2) and +(150:2) .. (0,0) -- cycle;
    \draw[postaction={decorate}] (0,0) .. controls +(30:2) and +(-30:2) .. (0,4);
    \draw[postaction={decorate}] (0,0) .. controls +(150:2) and +(210:2) .. (0,4);
    \fill[white] (0,2) circle (0.5);
    \fill[pattern=north west lines,pattern color=black] (0,2) circle (0.5);
    \circleminus{(0,2)}{0.5}
    \vtx{(0,0)}
    \vtx{(0,4)}
    \draw (1,2) node {$+$};
    \draw (-1,2) node {$-$};
  \end{tikzpicture}

  \caption{Left: The triple building block with a hole at the centre;
    it consists of three singular $2$-simplices with the indicated signs.
    Right: the graphical abbreviation we will use for this block; the
    signs at the edges are the signs resulting from the signs of the
    simplices on the left.}
  \label{fig:triplebuildingblock}
\end{figure}

\begin{figure}
  \begin{tikzpicture}[very thick, decoration={markings,
        mark=at position 0.5 with {\arrow{>}}}]
    \fill[\mygreen!20] (0,0) .. controls +(30:2) and +(-30:2) .. (0,4)
                               -- (0,4) .. controls +(-70:2) and +(70:2) .. (0,0) --cycle;
    \fill[\mygreen!20] (0,0) .. controls +(150:2) and +(210:2) .. (0,4)
                               -- (0,4) .. controls +(-110:2) and +(110:2) .. (0,0) -- cycle; 
    \draw[postaction={decorate}] (0,0) .. controls +(30:2) and +(-30:2) .. (0,4);
    \draw[postaction={decorate}] (0,0) .. controls +(150:2) and +(210:2) .. (0,4);
    \draw[postaction={decorate}] (0,0) .. controls +(70:2) and +(-70:2) .. (0,4);
    \draw[postaction={decorate}] (0,0) .. controls +(110:2) and +(-110:2) .. (0,4);
    \fill[white] (0.9,2.2) circle (0.25);
    \fill[pattern=north west lines,pattern color=black] (0.9,2.2) circle (0.25);
    \circleminus{(0.9,2.2)}{0.25}
    \fill[white] (-0.9,2.2) circle (0.25);
    \fill[pattern=north west lines,pattern color=black] (-0.9,2.2) circle (0.25);
    \circleminus{(-0.9,2.2)}{0.25}
    \draw (0.65,1.5) node {\small$-$};
    \draw (1.1,1.5) node {\small$+$};
    \draw (-0.65,1.5) node {\small$+$};
    \draw (-1.1,1.5) node {\small$-$};
    \draw (0,2) node {$\dots$};
    \vtx{(0,0)}
    \vtx{(0,4)}
  \end{tikzpicture}
  \qquad
  \quad
  \begin{tikzpicture}[very thick, decoration={markings,
        mark=at position 0.5 with {\arrow{>}}}]
    \fill[\mygreen!20] (0,0) .. controls +(30:2) and +(-30:2) .. (0,4) --
                       (0,4) .. controls +(210:2) and +(150:2) .. (0,0) -- cycle;
    \draw[postaction={decorate}] (0,0) .. controls +(30:2) and +(-30:2) .. (0,4);
    \draw[postaction={decorate}] (0,0) .. controls +(150:2) and +(210:2) .. (0,4);
    \fill[white] (0,2) circle (0.5);
    \circleminus{(0,2)}{0.5}
    \draw[\red] (0,2) node {$n$};
    \vtx{(0,0)}
    \vtx{(0,4)}
    \draw (1,2) node {$+$};
    \draw (-1,2) node {$-$};
  \end{tikzpicture}

  \caption{Left: Combining $n$~triple building blocks ``linearly'' into a
    new building block, which consists of $3n$~simplices. Right:
    The abbreviation for the construction on the left hand side.}
  \label{fig:multitriple}
\end{figure}
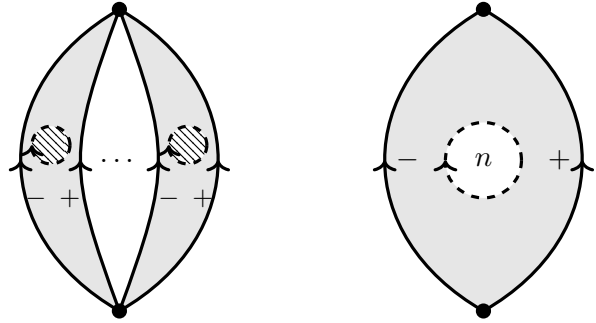

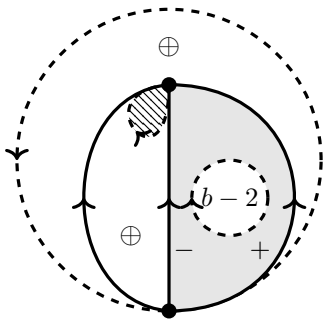
\begin{figure}
  \begin{tikzpicture}[very thick, decoration={markings,
        mark=at position 0.5 with {\arrow{>}}}]
    \circleplus{(0,0)}{2}
    \fill[\mygreen!20] (0,-2) .. controls +(0:2.2) and +(0:2.2) .. (0,1) -- (0,-2) -- cycle;
    \draw[postaction={decorate}] (0,-2) .. controls +(175:1.5) and +(185:1.5) .. (0,1);
    \draw[postaction={decorate}] (0,-2) -- (0,1);
    \fill[pattern=north west lines,pattern color=black] (0,1) .. controls +(270:1.5) and +(185:1.2) .. (0,1) -- cycle; 
    \draw[postaction={decorate},\red] (0,1) .. controls +(270:1.5) and +(185:1.2) .. (0,1); 
    \draw[postaction={decorate}] (0,-2) .. controls +(0:2.2) and +(0:2.2) .. (0,1);
    \fill[white] (0.8,-0.5) circle (0.5);
    \circleminus{(0.8,-0.5)}{0.5};
    \draw[\red] (0.8,-0.5) node {\small $b-2$};
    \vtx{(0,-2)};
    \vtx{(0,1)};
    \draw (0,1.5) node {$\oplus$};
    \draw (-0.5,-1) node {$\oplus$};
    \draw (1.2,-1.2) node {\small$+$};
    \draw (0.2,-1.2) node {\small$-$};
  \end{tikzpicture}
    
  \caption{Genus~$0$, with $b\in \bbN_{> 2}$ boundary components; there
    are $(b-2)$~triple building blocks.}
  \label{fig:g0b3}
\end{figure}

\def\octagon{%
    \begin{scope}[dotted]
      \draw[->] (270:2) -- (313:2);
      \draw[->] (313:2) -- (-2:2);
      \draw[->] (45:2) -- (2:2);
      \draw[->] (90:2) -- (47:2);
      \draw[->] (90:2) -- (133:2);
      \draw[->] (135:2) -- (178:2);
      \draw[->] (225:2) -- (182:2);
      \draw[->] (270:2) -- (227:2);
      \draw (292.5:2.2) node {$a_1$};
      \draw (-22.5:2.2) node {$b_1$};
      \draw (22.5:2.2) node {$a_1$};
      \draw (67.5:2.2) node {$b_1$};
      \draw (247.5:2.2) node {$b_g$};
    \end{scope}
}

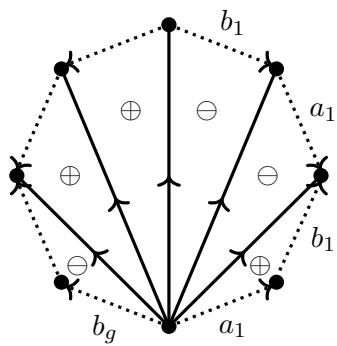
\begin{figure}
  \begin{tikzpicture}[very thick, decoration={markings,
        mark=at position 0.5 with {\arrow{>}}}]
    \foreach \j in {0,...,4} {%
      \draw[postaction={decorate}] (270:2) -- (45*\j:2);} 
    \octagon
    \foreach \j in {0,...,7}{%
      \vtx{(45*\j:2)}}
    \draw (315:1.7) node {$\oplus$};
    \draw (0:1.3) node {$\ominus$};
    \draw (60:1) node {$\ominus$};
    \draw (120:1) node {$\oplus$};
    \draw (180:1.3) node {$\oplus$};
    \draw (225:1.7) node {$\ominus$};
  \end{tikzpicture}
    
  \caption{Genus at least~$1$, empty boundary. The dotted edges will be
    glued as specified by the labels. For higher genus, we just increase
    the number of
    $\ominus\ominus\oplus\oplus$-blocks.}
  \label{fig:g1b0}
\end{figure}

\begin{figure}
  \begin{tikzpicture}[very thick, decoration={markings,
        mark=at position 0.5 with {\arrow{>}}}]
    \fill[\mygreen!20] (270:2) .. controls +(45:4) and +(100:4) .. (225:2) --
                       (225:2) .. controls +(100:1) and +(90:1.5) .. (270:2) -- cycle;
    \draw[postaction={decorate}] 
    (270:2) .. controls +(45:4) and +(100:4) .. (225:2);
    \fill[white] (-0.2,-0.2) circle (0.5);
    \circleminus{(-0.2,-0.2)}{0.5}
    \draw[\red] (-0.2,-0.2) node {\small$b-1$};
    \draw[postaction={decorate}] (270:2) .. controls +(90:1.5) and +(100:1) .. (225:2);
    \fill[pattern=north west lines,pattern color=black] (225:2) .. controls +(90:1.1) and +(-10:1) .. (225:2) -- cycle;
    \draw[postaction={decorate},\red] (225:2) .. controls +(90:1.1) and +(-10:1) .. (225:2);
    \octagon
    \foreach \j in {0,...,7}{%
      \vtx{(45*\j:2)}}
    \draw (255:1.65) node {$\oplus$};
    \draw (0.3,0.4) node {\small$+$};
    \draw (0,-1) node {\small$-$};
  \end{tikzpicture}
    
  \caption{Genus at least~$1$ with $b \in \bbN_{\geq 1}$ boundary components;
    the shaded block consists of $(b-1)$ triple building blocks (containing $(b-1)$
    boundary components). The dotted edges will be
    glued as specified by the labels. The upper part of the polygon is decomposed
    as in Figure~\ref{fig:g1b0}.}
  \label{fig:g1b1}
\end{figure}
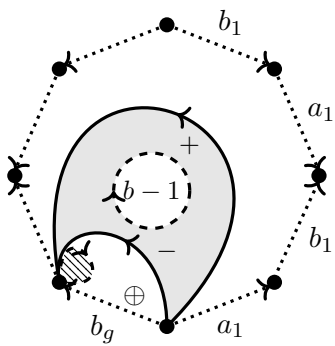

\pagebreak
\begin{corollary}\label{cor:surfaces-p-adic}
  Let $g \in \bbN_{\geq 1}$ and let $p \in \bbN$ be prime.
  \begin{enumerate}
  \item Then $\sv{\Sigma_g}_{\bbZ_p} = 4g -2$.
  \item If~$p > 2g-1$, then $\sv{\Sigma_g}_{\bbQ_p} = 4g -2$.
  \end{enumerate}
\end{corollary}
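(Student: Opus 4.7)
The proof essentially assembles already-established ingredients; the real content is knowing which of the earlier results to sandwich together.

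For part (1), the plan is to bracket $\sv{\Sigma_g}_{\bbZ_p}$ between two coincident bounds via the chain of inequalities in Corollary~\ref{cor:sandwichp}:
\[
\sv{\Sigma_g}_{(\bbF_p)} \;\leq\; \sv{\Sigma_g}_{\bbZ_p} \;\leq\; \sv{\Sigma_g}_{\bbZ}.
\]
The upper end equals $4g - 2$ by Proposition~\ref{prop:surfacesupper}. For the lower end, since $\bbF_p$ is an integral domain equipped with the trivial absolute value, Theorem~\ref{thm:surfaces}(1) gives $\sv{\Sigma_g}_{(\bbF_p)} = 4g - 2$. This pins $\sv{\Sigma_g}_{\bbZ_p}$ exactly at $4g - 2$.

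For part (2), the plan is to invoke the closed even-dimensional half of Proposition~\ref{prop:ZpQp}. The surface $\Sigma_g$ is closed of even dimension~$2$, and by Corollary~\ref{cor:sandwichp} together with part~(1) we have
\[
\sv{\Sigma_g}_{\bbQ_p} \;\leq\; \sv{\Sigma_g}_{\bbZ_p} \;=\; 4g-2.
\]
Under the hypothesis $p > 2g-1$, i.e.\ $2p > 4g-2$, this bound implies $\sv{\Sigma_g}_{\bbQ_p} < 2p$, which is precisely the condition required by Proposition~\ref{prop:ZpQp} to conclude $\sv{\Sigma_g}_{\bbQ_p} = \sv{\Sigma_g}_{\bbZ_p} = 4g-2$.

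There is no real obstacle here: all the work has already been done in Theorem~\ref{thm:surfaces}, Proposition~\ref{prop:surfacesupper}, Corollary~\ref{cor:sandwichp}, and Proposition~\ref{prop:ZpQp}. The only thing to verify is that the numerical inequality $4g-2 < 2p$ matches the stated hypothesis $p > 2g-1$, which it does.
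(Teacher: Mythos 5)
Your proof is correct and follows exactly the same route as the paper: part (1) is the identical sandwich $4g-2 \leq \sv{\Sigma_g}_{(\bbF_p)} \leq \sv{\Sigma_g}_{\bbZ_p} \leq \sv{\Sigma_g}_{\bbZ} \leq 4g-2$ from Theorem~\ref{thm:surfaces}, Corollary~\ref{cor:sandwichp}, and Proposition~\ref{prop:surfacesupper}, and part (2) correctly applies the closed even-dimensional refinement of Proposition~\ref{prop:ZpQp}. You have additionally spelled out the numerical check $4g-2 < 2p \Leftrightarrow p > 2g-1$, which the paper leaves implicit.
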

\begin{proof}
  \emph{Ad~1.}  From Theorem~\ref{thm:surfaces} and
  Proposition~\ref{prop:surfacesupper}, we obtain
  \[ 4g -2  \leq \sv{\Sigma_g}_{(\bbF_p)} \leq \sv{\Sigma_g}_{\bbZ_p} \leq \sv{\Sigma_g}_{\bbZ} \leq 4g-2
  \]
  and thus the claimed equality.
  
  \emph{Ad~2.}
  This follows from the first part and Proposition \ref{prop:ZpQp}.
\end{proof}

\begin{remark}[a new computation of ordinary simplicial volume of surfaces]\label{rem:new-computation}
  Let $g \in \bbN_{\geq 2}$. Then the arguments above show that we can
  prove the identity~$\sv{\Sigma_g} = 4g - 4$ for the classical
  simplicial volume without hyperbolic straightening: From
  Proposition~\ref{prop:surfacesupper} we know that
  $\sv{\Sigma_g}_\bbZ = 4 g -2$ (we proved this via semi-simplicial
  sets of cycles, without using hyperbolic straightening).
  
   \medskip
   
   (a) We have~$\sv{\Sigma_g} \leq 4g - 4$: For the sake of
   completeness, we recall Gromov's argument~\cite{Gromov-vbc}. For
   each~$k \in \bbN$, there exists a $k$-sheeted
   covering~$\Sigma_{g_k} \longrightarrow \Sigma_g$, where $g_k = k
   \cdot g -k +1$.  Hence, we obtain
   (Proposition~\ref{prop:degestimate})
    \[ \sv{\Sigma_g} \leq \inf_{k \in \bbN} \frac{\sv{\Sigma_{g_k}}_{\bbZ}}k
       = \inf_{k \in \bbN} \frac{4 \cdot (k \cdot g - k + 1) - 4}k = 4 g - 4.
    \]
  
    \medskip
    
    (b) We have~$\sv{\Sigma_g} \geq 4g - 4$: Because $\bbQ$ is dense
    in~$\bbR$, we have (Proposition~\ref{prop:density})
    \[ \sv{\Sigma_g }
    = \sv{\Sigma_g}_\bbQ
    = \inf \Bigl\{ \frac{\sv{m \cdot [\Sigma_g]_\bbZ}_{\bbZ}}{m} \Bigm| m \in \bbN_{>0} \Bigr\}.
    \]
    Let $m \in \bbN_{>0}$ and let $c = \sum_{j=1}^k a_j\sigma_j \in
    C_2(\Sigma_g;\bbZ)$ be a cycle with~$[c] = m \cdot
    [\Sigma_g]_\bbZ$ and $a_1, \dots, a_k \in \{-1,1\}$ as well
    as~$|c|_1 = k$.  Because $c$ is a cycle, we can find a matching of
    the edges (and their signs) in the simplices of~$c$ such that the
    associated semi-simplicial set is a two-dimensional
    pseudo-manifold. As no proper singularities at the vertices can
    occur in dimension~$2$, this pseudo-manifold leads to a manifold,
    whence a surface. In other words, there is an oriented compact
    surface~$\Sigma$ (which we may assume to be connected) and a
    continuous map~$f \colon \Sigma \longrightarrow \Sigma_g$ with
    \[ H_2(f;\bbZ) ([\Sigma]_\bbZ) = [c] = m \cdot [\Sigma]_\bbZ
    \in H_2(\Sigma_g;\bbZ)
    \qand
    \sv \Sigma_\bbZ \leq k = |c|_1.
    \]
    Smoothly approximating~$f$ and looking at the corresponding harmonic
    representative shows that~\cite[p.~264]{eellswood}
    \[ m = |\deg f|
    \leq \frac{g(\Sigma) - 1}{g - 1}.
    \]
    Therefore, we obtain
    \[ \frac{|c|_1}{m}
    \geq \frac{\sv{\Sigma}_\bbZ}m
    \geq \frac{\bigl(4 g(\Sigma) - 2\bigr) \cdot (g - 1)}{g(\Sigma) - 1}
    \geq 4 g  - 4.
    \]
    Taking the infimum over all such cycles~$c$ proves the estimate.
\end{remark}

\subsection{On mod~$p$ and $p$-adic approximation of simplicial volume}

Let $M$ be an oriented compact connected manifold and let $F(M)$ denote
the set of all (isomorphism classes of) finite connected coverings of~$M$.
Moreover, let $R$ be a seminormed ring. Then the \emph{stable
  $R$-simplicial volume} of~$M$ is defined by
\[ \| M,\partial M \|^\infty_R := \inf_{(p \colon N \rightarrow M) \in F(M)} \frac{\sv{N,\partial N}_R}{|\deg p|}.
\]
Similar to the case of Betti numbers or logarithmic torsion, one might wonder
for which manifolds~$M$ and which coefficients~$R$, we
have~$\|M,\partial M\| = \|M,\partial M\|^\infty_R$.
This question has been studied for $\bbZ$-coefficients~\cite{FFM,FLPS,loehrg,fauser,ffl,flmq}
and to a much lesser degree for $\bbF_p$-coefficients~\cite{loehfp}.
However, it was, for instance, not even known whether the
simplicial volume of surfaces satisfies mod~$p$ approximation.
With the methods developed in the previous section, we can solve
this problem for surfaces:

\begin{remark}[surfaces]\label{rem:stable}
  The simplicial volume of surfaces satisfies integral, mod~$p$,
  and $p$-adic approximation by the corresponding normalised simplicial
  volumes of finite coverings: 
  Let $g \in \bbN_{\geq 1}$ and let $p$ be a prime. Then we have
  \begin{align*}
    \| \Sigma_g \|
    & = 4g - 4
    = \|\Sigma_g\|^\infty_{\bbZ}
    = \|\Sigma_g\|^\infty_{(\bbF_p)}
    = \|\Sigma_g\|^\infty_{\bbZ_p}.
  \end{align*}
  The first two equalities are contained
  in Remark~\ref{rem:new-computation}. Using Proposition~\ref{prop:surfacesupper} and
  Corollary~\ref{cor:surfaces-p-adic}, we can (in the same way) prove the last two equalities.
\end{remark}

\begin{remark}[$3$-manifolds]
  Let $M$ be an oriented closed connected aspherical $3$-manifold.
  Then it is known that~\cite{flmq}
  \[ \|M\|^\infty_{\bbZ} = \frac{\mathrm{hypvol}(M)}{v_3}.
  \]
  Hence, by Corollary~\ref{cor:sandwichp}, we also have
  \[ \|M\|^\infty_{(\bbF_p)}
  \leq \|M\|^\infty_{\bbZ_p}
  \leq \|M\|^\infty_{\bbZ}
  = \frac{\mathrm{hypvol}(M)}{v_3}
  \]
  for all primes~$p$.
  
  In the context of homology torsion growth, it would be interesting
  to determine whether $\| M\|^\infty_{(\bbF_p)} = \|
  M\|^\infty_{\bbZ_p} = \|M\|$ holds for all oriented closed connected
  aspherical $3$-manifolds.
\end{remark}

\begin{remark}[mod~$p$ Singer conjecture]
  Let $p$ be an odd prime.  Avramidi, Okun, Schreve established that
  the $\bbF_p$-Singer conjecture fails (in all high enough
  dimensions)~\cite{avramidiokunschreve}, i.e., there exist oriented
  closed connected aspherical $d$-manifolds~$M$ with residually finite
  fundamental group and an~$n \neq d/2$ such that the associated
  $\bbF_p$-Betti number gradient in dimension~$n$ is non-zero.

  By Corollary~\ref{cor:bettiZp}, also the
  mod~$p$ and the $p$-adic simplicial volume gradients are non-zero.
  In particular, the stable integral simplicial volume of~$M$ is non-zero.

  It would be interesting to determine whether the classical simplicial
  volume of~$M$ is zero or not.
\end{remark}

\begin{remark}[estimates for groups]
  The Betti number estimates from Section~\ref{subsec:betti} give
  corresponding estimates between the homology gradients and the
  stable integral simplicial volumes over the given seminormed ring.
  These can be turned into homology gradient estimates for groups
  as follows: If $G$ is a (discrete) group that admits a finite model~$X$
  of the classifying space~$BG$, we can embed $X$ into a high-dimensional
  Euclidean space~$\bbR^N$ and then thicken the image of the embedding to
  a compact manifold~$M$ with boundary, which is homotopy equivalent
  to~$X$, whence has the same homology (gradients) as the group~$G$.
  One can then study the behaviour of simplicial volumes of~$M$ to get
  upper bounds for homology gradients of~$G$.
\end{remark}

\section{Non-values}

Analogously to the case of classical simplicial volume~\cite{heuerloeh_trans},
we have:

\begin{theorem}\label{thm:nonvalue}
  Let $p \in \bbN$ be prime and let $A \subset \bbN$ be a set that is
  recursively enumerable but not recursive. Then there is \emph{no}
  oriented closed connected manifold~$M$ whose simplicial
  volume~$\sv{M}_{\bbQ_p}$ equals
  \[ 2 - \sum_{n \in \bbN \setminus A} 2^{-n}.
  \]
  The same statement also holds for~$\sv{M}_{\bbZ_p}$.
\end{theorem}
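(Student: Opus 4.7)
The plan is to combine the right computability of the $p$-adic simplicial volumes (Proposition~\ref{prop:rightcomp}) with the fact that the real number
\[
  x := 2 - \sum_{n \in \bbN \setminus A} 2^{-n}
\]
is left computable but \emph{not} right computable whenever $A$ is r.e.\ but not recursive. Granted this, suppose that $\sv{M}_{\bbQ_p} = x$ held for some oriented closed connected manifold~$M$. Then Proposition~\ref{prop:rightcomp} would produce a computable decreasing sequence of rational upper bounds converging to~$x$, showing that $x$ is right computable---a contradiction. The same reasoning handles $\sv{M}_{\bbZ_p}$.

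For the left computability of $x$, I would rewrite $x = c + \sum_{n \in A} 2^{-n}$ with the explicit computable constant $c := 2 - \sum_{n \in \bbN} 2^{-n}$ and fix a recursive enumeration $A = \bigcup_s A_s$ by finite sets $A_s \subseteq A_{s+1}$. Then $c + \sum_{n \in A_s} 2^{-n}$ is a computable non-decreasing sequence of rationals with limit~$x$. For the failure of right computability, note that, since $A$ is r.e.\ but not recursive, both~$A$ and~$\bbN \setminus A$ are infinite (a finite or cofinite r.e.\ set would be recursive). Hence $x - c = \sum_{n \in A} 2^{-n}$ admits a unique binary expansion, with infinitely many ones and infinitely many zeros, whose $n$-th digit is the indicator of $n \in A$. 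If $x$ were also right computable, then together with left computability it would be a computable real, the digits of its binary expansion could be computed uniformly, and $A$ would be decidable---contradicting the non-recursiveness of~$A$.

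The main obstacle I expect is the digit-extraction step: one has to argue that a real number which is both left and right computable has a uniformly computable binary expansion, provided it is not a dyadic rational. Here the non-recursiveness of~$A$ is used precisely to guarantee that $\bbN \setminus A$ is infinite, which rules out the dyadic ambiguity that would otherwise obstruct reading off digits from two-sided rational approximations. Everything else is a routine combination of the recursive enumeration of $A$ with the explicit right-computable fundamental cycle search underlying Proposition~\ref{prop:rightcomp}.
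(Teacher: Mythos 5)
Your proof is correct and follows the same high-level route as the paper: establish right-computability of $\sv{M}_{\bbQ_p}$ and $\sv{M}_{\bbZ_p}$ via Proposition~\ref{prop:rightcomp}, and combine this with the non-right-computability of $2 - \sum_{n\in\bbN\setminus A}2^{-n} = \sum_{n\in A}2^{-n}$. The only difference is that the paper merely cites the literature on Specker sequences for the latter fact, whereas you spell out a self-contained argument (left computability via staged enumeration of~$A$, plus the observation that a computable real which is not a dyadic rational has a computable binary expansion, so right computability would make $A$ decidable); your handling of the potential dyadic-ambiguity issue via infiniteness of both $A$ and $\bbN\setminus A$ is exactly the right point to address, and your argument is sound.
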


The proof is based on the following notion: 
A real number~$x \in \bbR$ is \emph{right-computable} if the set~$\{ a
\in \bbQ \mid x < a\}$ is recursively enumerable~\cite{zhengrettinger}.
For example, all algebraic numbers are (right-)computable~\cite[Section~6]{eisermann}
and there are only countably many right-computable real numbers.

\begin{proof}[Proof of Theorem~\ref{thm:nonvalue}]
  The numbers in this theorem are known to be \emph{not}
  right-computable: This can be easily derived from known properties
  of (right-)computable numbers and Specker
  sequences~\cite{weihrauch}. Therefore, this theorem is a direct
  consequence of the observation in Proposition~\ref{prop:rightcomp}
  below.
\end{proof}

\begin{proposition}\label{prop:rightcomp}
  Let $p \in \bbN$ be prime and let $M$ be an oriented closed
  connected manifold. Then the real numbers~$\sv{M}_{\bbQ_p}$ and
  $\sv{M}_{\bbZ_p}$ are right-computable.
\end{proposition}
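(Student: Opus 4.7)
The proof follows the strategy of Heuer--L\"oh~\cite{heuerloeh_trans} for the classical simplicial volume, adapted to the non-archimedean coefficient rings. By Corollary~\ref{cor:densityp} we have $\sv{M}_{\bbQ_p} = \sv{M}_{\bbQ, |\cdot|_p}$ and $\sv{M}_{\bbZ_p} = \sv{M}_{\bbZ, |\cdot|_p}$, so it suffices to work with rational (respectively integral) chains. Fix a finite triangulation~$T$ of~$M$ (which exists when $M$ is smooth, and we may assume this throughout) and, for each $n \in \bbN$, write~$T^{(n)}$ for the $n$-th iterated barycentric subdivision. Let $\Sigma_n$ denote the finite set of affine singular $d$-simplices $\Delta^d \to M$ realising the top-dimensional simplices of~$T^{(n)}$, and set $\Sigma := \bigcup_n \Sigma_n$; this is a recursively enumerable set.

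The enumeration runs over all finite formal sums $c = \sum_j a_j \sigma_j$ with $\sigma_j \in \Sigma$ and coefficients $a_j \in \bbQ$ (respectively $\bbZ$), and outputs in a dovetailed fashion every rational $a > |c|_{1,p}$ for which $c$ is a fundamental cycle. Since every such $c$ lies in $C_d(T^{(n)};\bbQ)$ for some explicit~$n$, the conditions ``$c$ is a cycle'' and ``$[c] = [M]_\bbQ \in H_d(M;\bbQ)$'' reduce to decidable linear-algebra problems in a finite-dimensional simplicial chain complex, and the norm $|c|_{1,p} = \sum_j |a_j|_p$ is a computable rational. The resulting set of outputs is thus recursively enumerable and contained in $\{a \in \bbQ \mid a > \sv{M}_{\bbQ_p}\}$.

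The crux is \emph{cofinality}: every rational $a > \sv{M}_{\bbQ_p}$ must appear in the enumeration. Given such~$a$, choose a $\bbQ$-fundamental cycle $c' = \sum_j a_j \sigma_j'$ with $|c'|_{1,p} < a$ and consider the finite semi-simplicial set~$X$ generated by~$c'$ (cf.\ Section~3.2), with structure map $\rho_{c'}\colon |X| \to M$. For $n$ large enough we choose a vertex assignment $\phi\colon X_0 \to (T^{(n)})_0$ sending each $v \in X_0$ to a vertex of $T^{(n)}$ sufficiently close to $\rho_{c'}(v)$ that the affine extension $\rho\colon |X| \to M$ (affine on each top simplex of~$X$ with respect to the simplicial structure on~$T^{(n)}$) is homotopic to $\rho_{c'}$. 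The pushforward $c := \rho_*(\tilde c)$, where $\tilde c \in C_d(X;\bbQ)$ is the chain underlying~$c'$, then lies in the span of~$\Sigma_n$, carries the same coefficients $a_j$ (so $|c|_{1,p} = |c'|_{1,p} < a$), and is a $\bbQ$-fundamental cycle of~$M$ because $\rho \simeq \rho_{c'}$ implies $[c] = [c'] = [M]_\bbQ$.

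The main obstacle is this approximation step: the affine replacement must preserve every face identification in the support of~$c'$ and must leave the coefficients untouched, since preserving the $p$-adic $\ell^1$-norm exactly is essential (subdivision, which would increase the number of summands, must be avoided). The key is to perform the approximation at the level of the semi-simplicial set~$X$---assigning one vertex of~$T^{(n)}$ per \emph{vertex of~$X$}, not per occurrence of a vertex in a simplex---so that all face relations in~$X$ are automatically respected by~$\rho$. The argument for $\sv{M}_{\bbZ_p}$ is identical, with $\bbZ$ in place of $\bbQ$.
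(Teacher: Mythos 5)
You take a genuinely different route from the paper: the paper reduces both statements to the integral $\ell^1$-seminorm (via Corollary~\ref{cor:densityp}, and for $\sv{M}_{\bbQ_p}$ additionally via the scaling limit of Proposition~\ref{prop:scalingp}) and then invokes the combinatorial recursive-enumerability lemma of Heuer--L\"oh~\cite{heuerloeh_trans} as a black box, whereas you attempt a direct geometric enumeration over affine chains in iterated barycentric subdivisions of a fixed triangulation.

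This direct approach has a genuine gap precisely in the cofinality step, which is the part the Heuer--L\"oh combinatorial lemma is designed to sidestep. Given a $\bbQ$-fundamental cycle $c'=\sum_j a_j\sigma_j'$ with associated semi-simplicial set $X$, you propose to choose a vertex assignment $\phi\colon X_0\to (T^{(n)})_0$ and extend affinely. But such a $\phi$ does not, in general, extend to a map $|X|\to |T^{(n)}|$ that is affine/simplicial on top simplices: for a top simplex $\tau$ of $X$, the images $\phi(v)$, $v\in\tau$, need not span a $d$-simplex of $T^{(n)}$; and even when they do, the resulting map is homotopic to $\rho_{c'}$ only when $\rho_{c'}$ already carries each simplex of $X$ into the open star of its target simplex, which fails for generic singular chains. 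The classical simplicial approximation theorem repairs this by subdividing the \emph{source} $X$, but each barycentric subdivision multiplies the number of top $d$-simplices by $(d+1)!$ and thereby inflates the $\ell^1$-norm --- exactly the move you correctly declare impermissible. (You also implicitly assume $M$ is triangulable, which is not among the hypotheses.) The point of the combinatorial lemma of Heuer--L\"oh is to enumerate abstract semi-simplicial sets together with $d$-cycles and homotopy classes of maps to~$M$, rather than to realise those maps affinely inside a triangulation; without such a replacement, you have not shown that the enumerated set is cofinal in $\{a\in\bbQ\mid a>\sv{M}_{\bbQ_p}\}$, so the argument does not close.
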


\begin{proof}
  We proceed as in the corresponding proof for ordinary simplicial
  volume~\cite[Theorem~E]{heuerloeh_trans}: 
  The same combinatorial argument as for the integral
  norm~$\|\cdot\|_{1,\bbZ}$~\cite[Lemma~4.4]{heuerloeh_trans} shows
  that the set
  \[ S := 
     \bigl\{ (m,a) \in \bbN \times \bbQ
     \bigm| \|m \cdot [M]_{\bbZ}\|_{\bbZ,|\cdot|_p} < a
     \bigr\}
     \subset \bbN \times \bbQ
  \]
  is recursively enumerable. 
  In particular,
  \[ \bigl\{ a \in \bbQ
     \bigm| \|[M]_{\bbZ}\|_{\bbZ,|\cdot|_p} < a
     \bigr\}
     = \mathrm{pr}_2 \bigl(S \cap \bigl( \{1\} \times \bbQ \bigr)\bigr)
  \]
  is recursively enumerable, which means that $\sv{M}_{\bbZ_p} = \|
  [M]_\bbZ \|_{\bbZ,|\cdot|_p}$ (Corollary~\ref{cor:densityp}) is
  right-computable.
  
  Moreover, in combination with Proposition~\ref{prop:scalingp} and
  Corollary~\ref{cor:densityp}, we obtain that 
  \begin{align*}
    \bigl\{ a \in \bbQ \bigm| \sv{M}_{\bbQ_p} < a \bigr\}
    & = \Bigl\{ a \in \bbQ
    \Bigm| \exi{m \in \bbN_{>0}}
    \|p^m \cdot [M]_{\bbZ}\|_{\bbZ,|\cdot|_p} < \frac a{p^m} \Bigr\}
    \\
    & = \Bigl\{ a \in \bbQ
    \Bigm| \exi{m \in \bbN_{>0}} \Bigl( p^m, \frac a{p^m}\Bigr) \in S 
    \Bigr\}.
  \end{align*}
  Because $S$ is recursively enumerable, this set is also 
  recursively enumerable. Hence, $\sv{M}_{\bbQ_p}$ is
  right-computable.
\end{proof}

The same arguments also can be used to show the corresponding results
for oriented compact connected manifolds with boundary.

\bibliographystyle{abbrv}
\bibliography{literatur} 
\end{document}